\numberwithin{equation}{section}
\crefname{Thm}{Theorem}{Theorems}
\crefname{Rem}{Remark}{Remarks}
\crefname{Prop}{Proposition}{Propositions}
\newtheorem{Cor}[equation]{Corollary}
\newtheorem{Lem}[equation]{Lemma}
\newtheorem{Prop}[equation]{Proposition}
\theoremstyle{remark}
\newtheorem{Def}[equation]{Definition}
\newtheorem{Not}[equation]{Notation}
\newtheorem{Exa}[equation]{Example}
\newtheorem{Hyp}[equation]{Hypothesis}
\newtheorem{Rem}[equation]{Remark}
\newtheorem{Rec}[equation]{Recollection}
\newtheorem{Cons}[equation]{Construction}
\newcommand{\nc}{\newcommand}
\nc{\dmo}{\DeclareMathOperator}
\dmo{\Ab}{Ab}
\dmo{\add}{add}
\dmo{\Add}{Add}
\dmo{\AM}{AM}
\dmo{\ATM}{ATM}
\dmo{\Aut}{Aut}
\dmo{\Chain}{Ch}
\dmo{\Cpx}{\Chain}
\dmo{\coker}{coker}
\dmo{\cone}{cone}
\dmo{\Defl}{Defl}
\dmo{\Der}{D}
\dmo{\DAM}{DAM^{\geom}}%
\dmo{\DAMbig}{DAM}%
\dmo{\DATM}{DATM^{\geom}}%
\dmo{\DATMbig}{DATM}%
\dmo{\DTM}{DTM^{\geom}}%
\dmo{\eff}{eff}
\dmo{\Ext}{Ext}
\dmo{\Gal}{Gal}
\dmo{\gr}{gr}
\dmo{\grmodname}{grmod}%
\dmo{\Hm}{H}
\dmo{\Hom}{Hom}
\dmo{\Hty}{\Kb}
\dmo{\Id}{Id}
\dmo{\incl}{incl}
\dmo{\ind}{ind}
\dmo{\Ind}{Ind}
\dmo{\Infl}{Infl}
\dmo{\IInfl}{IInfl} 
\dmo{\Ker}{Ker}
\dmo{\MAc}{\mathcal{E}_{\textup{c}}}
\dmo{\MA}{\mathcal{E}}
\dmo{\modname}{mod}%
\dmo{\Mod}{\mathsf{Mod}}
\dmo{\mot}{M}
\dmo{\noeth}{noeth}
\dmo{\opname}{op}
\dmo{\perm}{\mathsf{perm}}
\dmo{\Perm}{\mathsf{Perm}}
\dmo{\Qcoh}{Qcoh}
\dmo{\quot}{quot}
\dmo{\RMod}{RMod}
\dmo{\RProj}{RProj}
\dmo{\RInj}{RInj}
\dmo{\sgn}{sgn}
\dmo{\SH}{SH^{\mathrm{c}}}
\dmo{\SHmot}{SH^{\mathrm{c}}_{\bbA^{\!1}}}
\dmo{\smallperf}{perf}
\dmo{\Span}{span}
\dmo{\Spc}{Spc}
\dmo{\Spec}{Spec}
\dmo{\Spech}{Spec^h}
\dmo{\st}{st}
\dmo{\stab}{stab}
\dmo{\Stab}{Stab}
\dmo{\sing}{sing}
\dmo{\StabD}{\mathsf{sing}}
\dmo{\supp}{supp}
\dmo{\fpname}{fp}
\dmo{\yoneda}{h}
\dmo{\thick}{thick}
\dmo{\triv}{triv}
\dmo{\TM}{TM}
\dmo{\Locname}{Loc}
\nc{\ababs}{{\sl ab absurdo}\xspace}
\nc{\AKB}{\bat{A}(\cK;\cat{B})}%
\nc{\ala}{{\`a la}\ }
\nc{\ac}{\mathrm{ac}}
\nc{\Ac}{\ac}
\nc{\atau}[1][\tau]{%
        \mathsf{a}_{#1}}
\nc{\aet}{\atau[\et]}
\nc{\atr}{\mathsf{a}_{\mathrm{tr}}}
\nc{\CohMack}[1][]{\mathop{\mathsf{Mack}^{\mathsf{coh}}_{#1}}}
\nc{\Mack}[1][]{\mathop{\mathsf{Mack}_{#1}}}
\nc{\Gac}{G\textrm{-}\ac}
\nc{\Gaac}{\Gamma\textrm{-}\ac}
\nc{\SAc}{\Gac}
\nc{\Inj}{\mathrm{Inj}}
\nc{\injres}{\mathbb{J}}
\nc{\calO}{\mathcal{O}}
\nc{\cat}[1]{\mathscr{#1}}
\nc{\caT}[1]{\cat{#1}}
\nc{\cc}{\mathsf{c}}
\nc{\cA}{\cat{A}}
\nc{\cB}{\cat{B}}
\nc{\cE}{\cat{E}}
\nc{\cF}{\cat{F}}
\nc{\cG}{\cat{G}}
\nc{\cH}{\cat{H}}
\nc{\cI}{\cat{I}}
\nc{\cJ}{\cat{J}}
\nc{\cK}{\cat{K}}
\nc{\cL}{\cat{L}}
\nc{\cM}{\cat{M}}
\nc{\cN}{\cat{N}}
\nc{\colim}{\mathop{\mathrm{colim}}}
\nc{\hocolim}{\mathop{\mathrm{hocolim}}}
\nc{\cP}{\cat{P}}
\nc{\cQ}{\cat{Q}}
\nc{\cR}{\cat{R}}
\nc{\cS}{\cat{S}}
\nc{\cT}{\cat{T}}
\nc{\cV}{\cat{V}}
\nc{\cX}{\cat{X}}
\nc{\cY}{\cat{Y}}
\nc{\Dperf}{\Der_{\smallperf}}
\nc{\Dsing}{\Der^{\sing}}
\nc{\StD}{\Dsing}
\nc{\DRperf}[1][]{\Der_{\kern-0.1em\ifblank{#1}{\CR}{#1}\text{-}\kern-0.1em\smallperf}}
\nc{\eg}{{\sl e.g.}\@\xspace}
\nc{\FP}{\mathrm{FP}} 
\nc{\FPL}{\mathrm{LP}} 
\nc{\grmod}[1]{#1\text{-}\kern-0.1em\grmodname}%
\nc{\fp}{^{\fpname}}
\nc{\gp}{\mathfrak{p}}
\nc{\Homcat}[1]{\Hom_{\cat #1}}
\nc{\hook}{\hookrightarrow}
\nc{\ie}{{\sl i.e.}\@\xspace}
\nc{\into}{\mathop{\rightarrowtail}}
\nc{\inv}{^{-1}}
\nc{\kk}{k}
\nc{\kkG}{\kk G}
\dmo{\CInvname}{\chi}
\nc{\CInv}[2]{\CInvname^{#1}} 
\nc{\CInvGR}{\CInv{G}{\CR}}
\nc{\CInvHR}{\CInv{H}{\CR}}
\nc{\CINVGR}{\bar{\CInvname}^{G}_{\CR}}
\nc{\Loc}[1]{\Locname(#1)}
\nc{\Locab}[1]{\langle#1\rangle}
\nc{\Locat}[1]{\Locab{\cat{#1}}}
\nc{\loccit}{{\sl loc.\ cit.}\xspace}
\nc{\Mid}{\,\big|\,}
\nc{\mmod}[1]{\modname(#1)}%
\nc{\MMod}[1]{\Mod(#1)}%
\nc{\onto}{\mathop{\twoheadrightarrow}}
\nc{\op}{^{\opname}}
\nc{\otr}{\mathsf{o}_{\mathrm{tr}}}
\nc{\sminus}{\smallsetminus}
\nc{\oursetminus}{\smallsetminus}
\nc{\pos}{\mathrm{pos}}
\nc{\potimes}[1]{^{\otimes #1}}
\nc{\permGR}{\perm(G;\CR)}
\nc{\pperm}{p\textrm{-}\!\perm}
\nc{\ppermutation}{$\natural$-permutation\xspace}
\nc{\sbull}{{\scriptscriptstyle\bullet}}
\nc{\brk}[1]{\{#1\}}
\nc{\SET}[2]{\big\{\,#1\Mid#2\,\big\}}
\nc{\PSh}[2]{\mathrm{PSh}_{#1}(#2)}
\nc{\Sh}[3]{\mathrm{Sh}_{#1}(#2;#3)}
\nc{\sstab}{\stabname\kern-0.1em\text{-}}%
\nc{\too}{\mathop{\longrightarrow}\limits}
\nc{\unit}{\mathbb{1}}
\nc{\via}{via\xspace}
\nc{\vcorrect}[1]{{\vphantom{\vbox to #1em{}}}}
\nc{\Keff}{\cK^{\eff}}%
\dmo{\DPerm}{D\mathsf{Perm}}
\nc{\DPermGR}{\DPerm(G;\CR)}
\nc{\Aname}{\cat{P}}
\nc{\Bname}{\cat{Q}}
\nc{\Acat}[2]{\Aname(#1;#2)}
\nc{\ACAT}[2]{\K\Inj_{\perm}(#1;#2)}
\nc{\ACATGR}{\ACAT{G}{\CR}}
\nc{\Bcat}[2]{\Bname(#1;#2)}
\nc{\AcatGk}{\Acat{G}{\kk}}
\nc{\BcatGk}{\Bcat{G}{\kk}}
\nc{\AcatGR}{\Acat{G}{\CR}}
\nc{\BcatGR}{\Bcat{G}{\CR}}
\nc{\CP}[2]{\cat{CP}(#1;#2)}
\nc{\Permtomod}{\Upsilon}
\nc{\barPermtomod}{\bar{\Permtomod}}
\nc{\F}{\Permtomod}
\nc{\barF}{\barPermtomod}
\nc{\tbarF}{U}
\nc{\Fplus}{\F^{\scriptscriptstyle+}}
\nc{\barFplus}{\barF^{\scriptscriptstyle+}}
\nc{\ConPerm}[2]{\Der_{\perm}(#1;#2)}
\nc{\Dperm}[2]{\ConPerm{#1}{#2}}
\nc{\ConPermGR}{\ConPerm{G}{\CR}}
\nc{\DpermGR}{\Dperm{G}{\CR}}
\nc{\KG}[2]{\rmG_0(#2#1)}
\nc{\KP}[2]{\rmK_0(#2#1)}
\nc{\KA}[2]{\rmK_0^{\Aname}(#1;#2)}
\nc{\KB}[2]{\rmK_0^{\Bname}(#1;#2)}
\nc{\KC}[2]{C_{#2}(#1)}
\nc{\AGk}{\KA{G}{\kk}}
\nc{\AGR}{\KA{G}{\CR}}
\nc{\BGk}{\KB{G}{\kk}}
\nc{\BGR}{\KB{G}{\CR}}
\nc{\CGk}{\KC{G}{\kk}}
\nc{\RGk}{\KG{G}{\kk}}
\nc{\RGR}{\KG{G}{\CR}}
\nc{\PGk}{\KP{G}{\kk}}
\nc{\PGR}{\KP{G}{\CR}}
\nc{\fundpur}{\mathrm{S}}
\nc{\G}{\mathbb{g}}
\nc{\I}{\mathbb{i}}
\nc{\invertpur}{\mathrm{L}}
\nc{\Sierp}{\Big\{\,\vcenter{\xymatrix@R=1em@H=.5em{*={\bullet}\ar@{-}[d]\\*={\circ}}}\,\Big\}}
\nc{\V}{\mathbb{V}}
\dmo{\quo}{quo}
\dmo{\sta}{sta}
\dmo{\Sta}{Sta}
\dmo{\fgt}{fgt}
\nc{\rsd}[1]{\mathrm{rsd}_{#1}}
\nc{\Reet}{\mathrm{Re}_{\et}}
\dmo{\End}{End}
\nc{\isoto}{\overset{\sim}{\,\to\,}}
\nc{\isofrom}{\overset{\sim}{\,\leftarrow\,}}
\nc{\xisoto}[1]{\xrightarrow[\sim]{#1}}
\nc{\xto}[1]{\xrightarrow{#1}}
\nc{\xfrom}[1]{\xleftarrow{#1}}
\nc{\xinto}[1]{\overset{#1}{\,\into\,}}
\nc{\xonto}[1]{\overset{#1}{\,\onto\,}}
\nc{\lto}{\leftarrow}
\nc{\qquadtext}[1]{\qquad\textrm{#1}\qquad}
\nc{\quadtext}[1]{\quad\textrm{#1}\quad}
\def\tobar{\mathrel{\mkern3mu  \vcenter{\hbox{$\scriptscriptstyle+$}}%
                    \mkern-12mu{\to}}}
\nc{\normal}{\vartriangleleft}
\nc{\lecl}{\Subset}
\dmo{\can}{can}
\dmo{\chara}{char}%
\dmo{\Chow}{Chow}
\nc{\Corr}{\mathsf{Cor}}
\dmo{\corr}{Cor} 
\dmo{\DMbig}{DM}
\dmo{\DMminuseff}{DM^{\,--,\mathrm{eff}}}
\dmo{\DMbigeff}{DM^{\mathrm{eff}}}
\dmo{\DMetbig}{DM_{\et}}
\dmo{\DMetbigeff}{DM_{\et}^{\mathrm{eff}}}
\dmo{\DM}{DM^{\geom}}
\dmo{\DMeff}{DM^{\geom,\mathrm{eff}}}
\dmo{\ev}{ev}
\dmo{\Gl}{Gl}
\dmo{\id}{id}
\dmo{\Img}{Im}
\dmo{\im}{im}
\dmo{\Komp}{K}
\dmo{\Map}{Map}%
\dmo{\orb}{or}
\dmo{\Orb}{Or}
\dmo{\proj}{proj}
\dmo{\Pic}{Pic}
\dmo{\Proj}{Proj} 
\dmo{\rmG}{G}
\dmo{\rmH}{H}
\nc{\rmh}{\mathsf{h}}
\dmo{\rmK}{K}
\nc{\rmL}{\mathsf{L}}
\nc{\rmR}{\mathsf{R}} 
\dmo{\Res}{Res}
\dmo{\res}{res}
\dmo{\smallb}{b}
\dmo{\geom}{gm}
\dmo{\Schm}{Sch}
\dmo{\stabname}{stab}
\dmo{\Sm}{\mathsf{Sm}}
\dmo{\Tor}{Tor}
\dmo{\Kos}{Kos}
\nc{\AbGrps}{\MMod{\bbZ}}
\nc{\adh}[1]{\overline{#1}}
\nc{\adhoc}{{\sl ad hoc}\xspace}
\nc{\adhpt}[1]{\adh{\{#1\}}}
\nc{\adj}{\dashv}
\nc{\adjto}{\rightleftarrows}
\nc{\afortiori}{{\sl a fortiori}}
\nc{\aka}{{a.\,k.\,a.}\ }
\nc{\apriori}{{\sl a priori}\xspace}
\nc{\bs}{\backslash}
\nc{\cf}{{\sl cf.}\ }
\nc{\Cb}{\Chain_{\smallb}}
\nc{\Ch}{\Cb}
\nc{\CR}{R}
\nc{\CRG}{\CR G}
\nc{\Db}{\Der_{\smallb}}
\nc{\Dbs}{\Db^{\sing}}
\nc{\D}{\Der}
\nc{\eps}{\epsilon}
\nc{\equalby}[1]{\overset{\textrm{#1}}=}
\nc{\et}{\textup{\'et}}
\nc{\Et}{\textup{\'Et}}
\nc{\etal}{{\sl et al.}}
\nc{\FFsep}{\overline{\FF}}
\nc{\FFq}{\FF_{\!q}}
\nc{\Fp}{\bbF_{\!p}}
\nc{\gm}{\mathfrak{m}}
\mathchardef\mhyphen="2D
\nc{\Gasets}[1][]{\ifblank{#1}{\Gamma}{#1}\mathsf{\mhyphen Sets}}
\nc{\gasets}[1][]{\ifblank{#1}{\Gamma}{#1}\mathsf{\mhyphen sets}}
\nc{\ideal}[1]{\langle #1\rangle}
\nc{\idealK}[1]{\langle #1\rangle_{\scriptscriptstyle\cK}}
\nc{\ihom}{{\mathsf{hom}}} 
\nc{\Kb}{\Komp_{\smallb}}
\nc{\K}{\Komp}
\nc{\Kbac}{\Komp_{\smallb,\mathrm{ac}}}
\nc{\KKac}{\KK_{\mathrm{ac}}}
\nc{\Kac}{\K_{\mathrm{ac}}}
\nc{\Kp}{\Komp_{+}}
\nc{\Km}{\Komp_{-}}
\nc{\Ksp}{\Komp_{+,\mathrm{sp}}}
\nc{\Kbm}{\Komp_{\mathrm{b},\le0}}
\nc{\Lotimes}{\otimes^{\rmL}}
\nc{\Nis}{\textup{Nis}}
\nc{\pproj}[1]{#1\text{-}\kern-0.1em\proj}%
\nc{\Pout}[1]{\Paul{\sout{#1}}}
\nc{\restr}[1]{_{|_{\scriptstyle #1}}}
\nc{\smat}[1]{\left(\begin{smallmatrix} #1 \end{smallmatrix}\right)}
\nc{\Sn}[1]{\mathfrak{S}_{#1}}
\nc{\Snm}{\Sn{m}}
\nc{\Spccat}[1]{\Spc(\cat #1)}
\nc{\SpcH}{\Spc(\cH)}
\nc{\SpcK}{\Spc(\cK)}
\nc{\To}{\Rightarrow}
\nc{\Top}{\mathsf{Top}}
\nc{\EndHere}{\bibliographystyle{alpha}\bibliography{ref}
\nc{\Hg}{K\cap{}^{g\!}H}
\nc{\Kg}{K^{\!g}\cap H}


\date{\today}

\author{Paul Balmer}
\address{Paul Balmer, UCLA Mathematics Department, Los Angeles, CA 90095-1555, USA}
\email{balmer@math.ucla.edu}
\urladdr{https://www.math.ucla.edu/~balmer}

\author{Martin Gallauer}
\address{Martin Gallauer, Oxford Mathematical Institute, Oxford, OX2 6GG, UK}
\email{gallauer@maths.ox.ac.uk}
\urladdr{https://people.maths.ox.ac.uk/gallauer}


\hypersetup{pdfauthor={Paul Balmer, Martin Gallauer},pdftitle={Permutation modules, Mackey functors, and Artin motives}}
\externaldocument[pmcs:]{pmcs}[https://arxiv.org/pdf/2009.14093.pdf]

\usepackage{tikz}
\usetikzlibrary{arrows,shapes,positioning,backgrounds,cd}
\tikzset{-,>=stealth',shorten >=2pt,shorten <=2pt,
  main node/.style={rectangle,fill=blue!5,font=\scshape},
  label/.style={font=\itshape},
}

\begin{document}
\makeatletter\@input{pmcs.tex}\makeatother

\title[Permutation modules, Mackey functors, and Artin motives]{Permutation modules, Mackey functors,\\ and Artin motives}

\begin{abstract}
We explain in detail the connections between the three concepts in the title, and we discuss how the `big' derived category of permutation modules introduced in~\cite{balmer-gallauer:resol-big} fits into the picture.
\end{abstract}

\subjclass[2010]{}
\keywords{Artin motive, permutation module, Mackey functor, derived category}

\thanks{First-named author supported by NSF grant~DMS-1901696.}

\maketitle


\section{Introduction}
\label{sec:intro}%

\subsection*{Objectives}

This article is a companion to our work in progress on Artin-Tate motives from the point of view of tensor-triangular geometry, which we have started to document in~\cite{balmer-gallauer:rage,balmer-gallauer:resol-small,balmer-gallauer:resol-big}.
As such, its goal is to explain in detail the beautiful connections between that subject matter and certain representation theoretic topics.
Partly, these connections are used in the work alluded to in order to infer from representation theory to algebraic geometry, and partly they make it possible to deduce consequences for representation theoretic questions from our results in algebraic geometry.

In a nutshell, the main topics to be discussed are depicted in \Cref{fig:main-triangle}, with the names of those mathematicians who arguably contributed the most to our understanding of the corresponding interrelations between these topics.
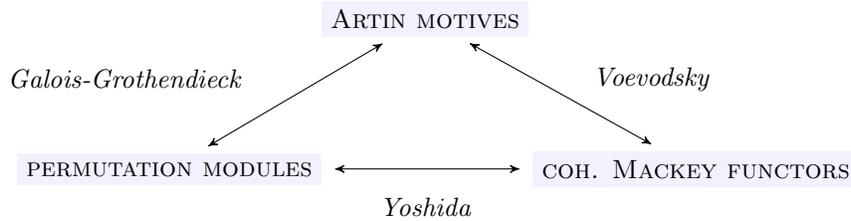
\begin{figure}[h]
\centering
\begin{tikzpicture}[shorten >=5pt,shorten <=5pt]
\node[main node] at (0,1) (A) {Artin motives};
\node[main node] at (-3.4,-1) (P) {permutation modules};
\node[main node] at (3.6,-1) (M) {coh. Mackey functors};
\node[label] at (-4,0.2) {Galois-Grothendieck};
\node[label] at (3,0.2) {Voevodsky};
\node[label] at (0,-1.5) {Yoshida};
\path[<->]
(A) edge (M)
(M) edge (P)
(P) edge (A);
\end{tikzpicture}
\caption{Diagrammatic representation of main topics}
\label{fig:main-triangle}
\end{figure}
In their most basic form, these connections are classical.
\emph{Grothendieck's Galois theory} describes an equivalence between finite \'etale-algebras over a field $\FF$ and finite sets with an action by the absolute Galois group $\Gamma=\Gamma_\FF$.
\textsc{Permutation modules} are obtained by linearizing $\Gamma$-sets, and the corresponding process under the Galois equivalence results precisely in \'etale correspondences, the zero-dimensional analogue of smooth correspondences that are so central in \emph{Voevodsky's} approach to motives.
This connects sheaves with transfers (and thus eventually \textsc{Artin motives}) and additive presheaves on permutation modules.
Finally, the latter are nothing but \textsc{cohomological Mackey functors}, as observed by \emph{Yoshida}.

Our interest is ultimately in these objects at the derived level, and more precisely in their `big derived categories' (that is, compactly generated triangulated categories).
Let us fix a commutative ring $\CR$.
As the category of $\CR$-linear cohomological Mackey functors for $\Gamma$, denoted $\CohMack[\CR]{\Gamma}$, is abelian, deriving it poses no difficulty.
Similarly, Voevodsky's motives are intrinsically derived, and therefore so are his Artin motives $\DAMbig(\FF;\CR)$.
However, permutation modules form an additive category, and it does not seem obvious what its `derived' category should be.
In~\cite{balmer-gallauer:resol-big}, we proposed a candidate, denoted $\DPerm(\Gamma;\CR)$, and discussed it briefly.
Our second goal in this article then is to discuss this candidate in more detail and to justify our proposal.
In particular, we will give a motivated definition which is intrinsic to permutation modules, establish fundamental properties of the resulting category, and relate it to Artin motives and cohomological Mackey functors.\,(\footnote{\,Due to space limitations, we do not discuss in this article a fourth incarnation of $\DPerm(\Gamma;\CR)$. To wit, in the context of equivariant homotopy theory it may also be identified with the homotopy category of the $\infty$-category $\Mod_{\mathrm{H}\underline{\CR}}(\mathrm{SH}(\Gamma))$. Here, $\mathrm{SH}(\Gamma)$ is the stable homotopy category of (genuine) $\Gamma$-equivariant spectra, and $\mathrm{H}\underline{\CR}$ is the Eilenberg-Maclane ring spectrum associated to the constant Mackey functor with value~$\CR$.})

\subsection*{Content}

We now turn to the contents of this paper in more detail, and we start by recalling our proposal for the derived category of permutation modules.
Throughout, $\Gamma$ is a profinite group and $\CR$ is a commutative ring.

To motivate the definition of $\DPerm(\Gamma;\CR)$, let us take a page from equivariant homotopy theory.
If $G$ is a finite group, then a $G$-equivariant map $f\colon X\to Y$ between $G$-spaces $X$ and~$Y$ is called a $G$-weak-equivalence if the induced maps on fixed points \mbox{$f^H\colon X^H\to Y^H$} are weak equivalences, for all subgroups~$H\le G$.
A conceptual reason is that orbits~$G/H$ are `equivariant points' and $\Map_G(G/H,X)\cong X^H$.
Transposing to representation theory, where $\Hom_{\MMod{\Gamma;\CR}}(\CR(\Gamma/H),M)\cong M^H$, we define \emph{$\Gamma$-quasi-isomorphisms} as those morphisms $f\colon M\to N$ of complexes of discrete $(\Gamma;\CR)$-modules such that $f^H\colon M^H\to N^H$ is a quasi-isomorphism for all open subgroups~$H\le \Gamma$.

\begin{Def}
\label{Def:DPerm-intro}%
The \emph{derived category of permutation modules} is the Verdier localization
\[
\DPerm(\Gamma;\CR)=\K(\Perm(\Gamma;\CR))\big[\{\Gamma\textrm{-quasi-isos}\}\inv\big]
\]
of the homotopy category of (not necessarily finitely generated) permutation modules with respect to $\Gamma$-quasi-isomorphisms.
\end{Def}

Among other things, we will establish the following:
\begin{enumerate}[\rm(a)]
\item
\label{it:DPerm-properties}%
The category $\DPerm(\Gamma;\CR)$ is tensor triangulated, compactly generated, and its compact part is the idempotent-completion of $\Kb(\perm(\Gamma;\CR))$, the bounded homotopy category of finitely generated permutation modules.
\item
\label{it:connections}%
If $\Gamma$ is the absolute Galois group of a field $\FF$, then $\DPerm(\Gamma;\CR)$ fits into the following diagram of tensor-triangulated equivalences:
\begin{equation}
\label{eq:triangle-derived}%
\vcenter{\xymatrix{
  &
  \DAMbig(\FF;\CR)
  \ar@{<->}[rd]^\sim
  \ar@{<->}[ld]_\sim
  \\
  \DPerm(\Gamma;\CR)
  \ar@{<->}[rr]_{\sim}
  &&
  \D(\CohMack[\CR]{\Gamma})
}}
\end{equation}
This realizes the interrelations set out in \Cref{fig:main-triangle} in a precise form.
Note also that $\DPerm(\Gamma;\CR)$ is therefore a \textsl{bona fide} derived category of an abelian category.
In particular, it admits a t-structure -- which, alas, does not restrict to the compact part.
\end{enumerate}

As for the structure of the document, in \Cref{sec:discrete-perm} we recall some basics on discrete and permutation modules, and in \Cref{sec:DPerm} we define $\DPerm(\Gamma;\CR)$ and establish property~\eqref{it:DPerm-properties}.
Much of the article (\Crefrange{sec:mackey-functors}{sec:motivic}) is devoted to the connections mentioned at the beginning of this introduction and depicted in \Cref{fig:main-triangle}, culminating in property~\eqref{it:connections}.
Finally, since the Mackey functoriality plays such an important role in this story, it would be a shame not to discuss the higher-level Mackey ($2$-)functoriality which the three categories in~\eqref{eq:triangle-derived} exhibit in the argument $\Gamma$ and $\FF$, respectively.
We do this in \Cref{sec:mackey-functoriality}, and establish that the equivalences in~\eqref{eq:triangle-derived} are in spirit equivalences of Mackey 2-functors in the sense of~\cite{balmer-dellambrogio:two-mackey}.
\subsection*{Intended audience}

Most, if not all, results in this article have appeared in the literature in one form or another, although not always in the present generality nor from the present point of view.
Instead, our intention was to produce a unified treatment of topics which are strongly related but stretch across different disciplines.
To cater for readers with various backgrounds, we have tried to be more thorough and elementary than it would otherwise have been necessary.
We therefore hope that, at least, algebraic geometers might learn something from representation theory, and conversely, representation theorists something from algebraic geometry.


\subsection*{Notation and conventions}
\phantomsection
\label{sec:conventions}

Throughout we fix a ring $\CR$ of coefficients, commutative and with unit.
Also, $\Gamma$ denotes a profinite group.
In \Cref{sec:Shtr,sec:motivic} it will often be the absolute Galois group of some field with a fixed separable algebraic closure.

All $\CR$-linear categories and functors are implicitly assumed to be additive.
A tensor category is an additive category with a symmetric monoidal structure, additive in both variables.
Similarly, an $\CR$-linear tensor category is $\CR$-linear and a tensor category such that the tensor product is $\CR$-linear in both variables.

Given an additive category $\cA$, we denote by $\cA^\natural$ its idempotent-completion (also called the Karoubi envelope).


\section{Discrete and permutation modules}
\label{sec:discrete-perm}%

In this section we are going to recall basic facts about discrete $(\Gamma;\CR)$-modules and, most relevantly for us, the subcategory of permutation modules.

\begin{Rec}
\label{Rec:G-sets}%
Recall the category $\Gasets$, whose objects are sets (viewed as discrete topological spaces) on which $\Gamma$ acts continuously, and whose morphisms are $\Gamma$-equivariant maps.
Continuity of the action on a set $X$ is equivalent to every stabilizer subgroup $\Gamma_x:=\SET{\gamma\in\Gamma}{\gamma \cdot x=x}$ being open, \ie closed and of finite index, for every~$x\in X$.
The category $\Gasets$ has arbitrary coproducts, given by disjoint union.
We may thus write every $X\in\Gasets$ as a coproduct of transitive $\Gamma$-sets.
By continuity, each transitive $\Gamma$-set is finite, (non-canonically) isomorphic to $\Gamma/H$ for some open subgroup $H\le \Gamma$.

The Cartesian product of sets on which $\Gamma$ acts diagonally, endows $\Gasets$ with a symmetric monoidal structure, and the Cartesian product commutes with arbitrary coproducts.

We will also be interested in the full subcategory $\gasets$ of finite $\Gamma$-sets.
The symmetric monoidal structure restricts to $\gasets$.
\end{Rec}

\begin{Rec}
\label{Rec:discrete-modules}%
Consider the category $\MMod{\Gamma;\CR}$ of discrete $(\Gamma;\CR)$-modules, that is, $\CR$-modules endowed with the discrete topology, on which $\Gamma$ acts continuously.
Continuity of the action on an $\CR$-module~$M$ is equivalent to every stabilizer subgroup $\Gamma_m:=\SET{\gamma\in\Gamma}{\gamma \cdot m=m}$ being open.

The tensor product over $\CR$ with the diagonal $\Gamma$-action endows $\MMod{\Gamma;\CR}$ with a tensor structure:
\[
M\otimes_{\CR}N,\qquad g(m\otimes n)=gm\otimes gn.
\]
Similarly, the $\CR$-module $\Hom_{\CR}(M,N)$ admits an `(anti)diagonal' $\Gamma$-action defined by $(g\,f)(m)=g\,f(g\inv m)$. The case $N=\CR$ with trivial $\Gamma$-action yields the $\CR$-linear dual $M^*=\Hom_{\CR}(M,\CR)$.
\end{Rec}

\begin{Not}
In this article we will not consider anything but \emph{discrete} $(\Gamma;\CR)$-modules, and we will therefore often omit the adjective.

The notation $H\le \Gamma$ will always denote an \emph{open} subgroup of $\Gamma$.
Similarly, $N\normal \Gamma$ will denote an \emph{open} normal subgroup of $\Gamma$.
\end{Not}

\begin{Not}
\label{Not:perm}%
Let $X$ be a $\Gamma$-set.
We denote the associated discrete $(\Gamma,\CR)$-module by $\CR(X)$.
In other words, $\CR(X)$ is the free $\CR$-module on the basis $X$, and the $\Gamma$-action is $\CR$-linearly extended from the action on $X$.
Thus a functor
\begin{equation}
\CR(-):\Gasets\to\MMod{\Gamma;\CR}\label{eq:CR(-)}
\end{equation}
from $\Gamma$-sets to discrete modules, that preserves coproducts, and sends finite products to tensor products, in other words, is symmetric monoidal.
An object in the essential image of this functor is called a \emph{permutation module}, and we denote the full subcategory on permutation modules by $\Perm(\Gamma;\CR)$.

The \emph{finitely generated permutation modules} are those in the essential image of $\gasets$ under $\CR(-)$.
They span a full subcategory denoted by $\perm(\Gamma;\CR)$.
Thus every object in $\Perm(\Gamma;\CR)$ (respectively, $\perm(\Gamma;\CR)$) is a (respectively, finite) direct sum of permutation modules of the form $\CR(\Gamma/H)$, $H\le \Gamma$.
Both are tensor categories (\cf our conventions set out on \cpageref{sec:conventions}).
\end{Not}

Recall that a family $\cG$ of objects in an abelian category is \emph{generating} if the functor $\prod_{g\in\cG}\Hom(g,-)$ is faithful.
Having a generating set is a necessary condition for an abelian category to be Grothendieck.
Recall also that an object $M$ in a Grothendieck abelian category is called \emph{finitely presented} if every map $M\to\colim_i M_i$ into a filtered colimit factors through some $M_i$.
\begin{Prop}
\label{discrete-modules-grothendieck}%
The category $\MMod{\Gamma;\CR}$ is Grothendieck abelian.
A family of finitely presented generators is given by $\SET{\CR(\Gamma/H)}{H\le\Gamma\text{ open}}$.
\end{Prop}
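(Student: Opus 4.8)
The plan is to reduce everything to two elementary inputs: the forgetful functors $\MMod{\Gamma;\CR}\hook\MMod{\CRG}\to\MMod{\CR}$, and the natural isomorphism $\Hom_{\MMod{\Gamma;\CR}}(\CR(\Gamma/H),M)\cong M^H$, under which a morphism out of $\CR(\Gamma/H)$ is recorded by the image of the basis element $eH$, such images being exactly the $H$-fixed elements of $M$. First I would establish abelianness and the computation of (co)limits. A submodule or quotient of a discrete module is discrete, since passing to a subobject or a quotient only enlarges stabilizers; hence $\MMod{\Gamma;\CR}$ is closed under kernels and cokernels inside $\MMod{\CRG}$ and is therefore abelian, with exact inclusion into $\MMod{\CRG}$. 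An arbitrary coproduct of discrete modules is again discrete, the stabilizer of an element being a finite intersection of open subgroups, so $\MMod{\Gamma;\CR}$ has all small colimits. In fact the inclusion $\MMod{\Gamma;\CR}\hook\MMod{\CRG}$ is coreflective, the coreflector sending $M$ to its largest discrete submodule $\SET{m\in M}{\Gamma_m\text{ open}}=\bigcup_{N\normal\Gamma}M^N$; being a left adjoint it preserves all colimits, and composing with the exact, colimit-preserving forgetful functor to $\MMod{\CR}$ shows that kernels, cokernels and colimits in $\MMod{\Gamma;\CR}$ are all computed on underlying $\CR$-modules.

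Granting this, AB5 for $\MMod{\Gamma;\CR}$ is immediate from AB5 for $\MMod{\CR}$, since exactness is detected on, and filtered colimits are computed on, underlying $\CR$-modules. For the generating family: given any $M$ and any $m\in M$, discreteness makes $\Gamma_m$ open, and $eH\mapsto m$ defines a morphism $\CR(\Gamma/\Gamma_m)\to M$ whose image contains $m$; thus $M$ is a quotient of $\bigoplus_{m\in M}\CR(\Gamma/\Gamma_m)$ and the family $\SET{\CR(\Gamma/H)}{H\le\Gamma}$ is generating. Together with AB5 and the existence of coproducts — so that $\bigoplus_{H}\CR(\Gamma/H)$ is a genuine generator — this shows $\MMod{\Gamma;\CR}$ is Grothendieck abelian.

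It remains to see that each $\CR(\Gamma/H)$ is finitely presented, that is, that every morphism $\CR(\Gamma/H)\to\colim_i M_i$ into a filtered colimit factors through some $M_i$. By the isomorphism above this amounts to surjectivity of the canonical map $\colim_i(M_i^H)\to(\colim_i M_i)^H$. So I would take $\bar m\in(\colim_i M_i)^H$ and lift it to $m\in M_i$ for some $i$. Here is the one place where profiniteness is genuinely used: since $M_i$ is discrete, $\Gamma_m$ has finite index in $\Gamma$, so $\Gamma_m\cap H$ has finite index in $H$ and the orbit $Hm$ is a finite subset of $M_i$; each of the finitely many differences $hm-m$ (for $h\in H$) becomes zero in the colimit, hence in some $M_{j_h}$, and by filteredness in a common $M_j$ with $j\geq i$; the image of $m$ in $M_j$ then lies in $M_j^H$ and still lifts $\bar m$, giving the desired factorization.

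I expect this last assertion — more precisely the finiteness of the orbit $Hm$ extracted from discreteness — to be the only step that is not pure transport of structure along the forgetful functors; everything else is formal. (Conceptually one could instead observe that $\Gasets$ is the classifying topos of $\Gamma$ and $\MMod{\Gamma;\CR}$ its category of $\CR$-module objects, hence automatically Grothendieck with generators read off from the site; but the hands-on argument is in keeping with the elementary spirit of this paper.)
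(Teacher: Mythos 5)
Your proof is correct and follows the same strategy as the paper's: abelianness, AB5, and (filtered) colimits are transported from $\CR$-modules through the forgetful functor, and then the factorization criterion for the generators $\CR(\Gamma/H)$ is checked via the identification $\Hom(\CR(\Gamma/H),M)\cong M^H$. The one substantive addition is your explicit argument for the last claim, which the paper dismisses as ``clear''; you correctly isolate that discreteness forces the orbit $Hm$ to be finite, so that only finitely many relations $hm-m=0$ need to be realized at a single stage of the filtered system, which is exactly the point at which the profinite/discrete hypothesis is used.
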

\begin{proof}
One easily verifies that the (conservative) forgetful functor $\MMod{\Gamma;\CR}\to\MMod{\CR}$ creates finite limits and finite colimits as well as filtered colimits.
In other words, those constructions, performed in $\CR$-modules still admit a continuous $\Gamma$-action.
Then $\MMod{\Gamma;\CR}$ admits filtered colimits (hence coproducts), and filtered colimits preserve monomorphisms.
Since $\SET{\CR(\Gamma/H)}{H\le \Gamma\textrm{ open}}$ is a family of generators, $\MMod{\Gamma;\CR}$ is a Grothendieck category.
That the objects in this family are finitely presented is clear.
\end{proof}

\begin{Rem}
\label{Rem:perm-dual}%
The objects in $\perm(\Gamma;\CR)$ are self-dual with respect to the tensor product.
Explicitly, if $X$ is a finite $\Gamma$-set viewed as an $\CR$-basis for $\CR(X)$, and if we denote the dual basis of $\CR(X)^*=\Hom_{\CR}(R(X),R)$ by $(\delta_x)_{x\in X}$, then the map
\begin{equation}
  \label{eq:perm-self-dual}
  \begin{aligned}
  \CR(X)&\to\CR(X)^*\\
  x&\mapsto \delta_x
  \end{aligned}
\end{equation}
is $\Gamma$-equivariant, and therefore an isomorphism in $\perm(\Gamma;\CR)$.
\end{Rem}

\begin{Lem}
\label{perm-dual-morphism}%
Let $f:X\to Y$ be a morphism in $\gasets$, and the associated morphism $f:\CR(X)\to\CR(Y)$ in $\perm(\Gamma;\CR)$.
Its dual admits the following explicit description:
\begin{align*}
  \CR(Y)\stackrel{\text{\eqref{eq:perm-self-dual}}}{\simeq}\CR(Y)^*&\xto{f^*} \CR(X)^*\stackrel{\text{\eqref{eq:perm-self-dual}}}{\simeq}\CR(X)\\
  y&\mapsto\sum_{f(x)=y}x.
\end{align*}
\end{Lem}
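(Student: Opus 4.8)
The plan is to unwind all the identifications and compute directly on basis elements; I expect no real difficulty here beyond bookkeeping.

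First I would recall that for a morphism $f\colon M\to N$ of discrete $(\Gamma;\CR)$-modules, the dual morphism $f^*\colon N^*\to M^*$ is given by precomposition, $f^*(\varphi)=\varphi\circ f$ for $\varphi\in N^*=\Hom_\CR(N,\CR)$. This is visibly $\CR$-linear and compatible with the antidiagonal $\Gamma$-actions of \Cref{Rec:discrete-modules}, hence a morphism in $\MMod{\Gamma;\CR}$; and when $M,N$ lie in $\perm(\Gamma;\CR)$ it is the categorical dual with respect to the self-duality recorded in \Cref{Rem:perm-dual}. Since all three maps in the displayed composite are $\CR$-linear and $\CR(Y)$ is free on the basis $Y$, it then suffices to evaluate the composite on an arbitrary basis element $y\in Y$.

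Next I would simply follow $y$ through the composite. By \eqref{eq:perm-self-dual} for $Y$, the element $y$ maps to $\delta_y\in\CR(Y)^*$. Applying $f^*$ gives $\delta_y\circ f\in\CR(X)^*$. Evaluating this functional on a basis element $x\in X$ yields $\delta_y(f(x))$, which equals $1$ if $f(x)=y$ and $0$ otherwise; hence $\delta_y\circ f=\sum_{x\in X,\ f(x)=y}\delta_x$ in $\CR(X)^*$, a finite sum since $X$ is finite. Finally, applying the inverse of the isomorphism \eqref{eq:perm-self-dual} for $X$, which sends $\delta_x\mapsto x$, produces $\sum_{f(x)=y}x\in\CR(X)$, which is exactly the claimed formula.

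The closest thing to an obstacle is purely formal: verifying that precomposition by $f$ is $\Gamma$-equivariant for the antidiagonal actions, so that $f^*$ is genuinely a morphism in the relevant category and the statement makes sense at all, and observing that the fiber $f^{-1}(y)$ is finite so that $\sum_{f(x)=y}x$ really defines an element of $\CR(X)$. Both are immediate, and $\Gamma$-equivariance of the whole composite is then automatic from equivariance of each constituent map, so no further verification is needed.
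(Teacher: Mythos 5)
Your proof is correct and is exactly the ``straightforward verification, using \Cref{Rem:perm-dual}'' that the paper's proof alludes to: chase a basis element $y$ through the identifications and use that $\delta_y\circ f=\sum_{f(x)=y}\delta_x$. Nothing to add.
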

\begin{proof}
This is straightforward verification, using \Cref{Rem:perm-dual}.
\end{proof}

\begin{Rec}
\label{Rec:Mackey-operations-discrete}%
Let $\Gamma'\le\Gamma$ be an open subgroup and $\gamma\in\Gamma$.
Given $M\in\MMod{\Gamma;\CR}$ we may restrict the action to obtain $\Res^\Gamma_{\Gamma'}M\in\MMod{\Gamma';\CR}$.
This defines a \emph{restriction} functor which has an adjoint (on both sides) called \emph{induction}:
\[
\Ind^\Gamma_{\Gamma'}:\MMod{\Gamma';\CR}\rightleftarrows\MMod{\Gamma;\CR}:\Res^\Gamma_{\Gamma'}
\]
It is easy to see that they restrict to an adjunction on permutation modules.

Given $M\in\MMod{\Gamma';\CR}$ we denote by $c_{\gamma}(M)$ the same $\CR$-module on which $g\in{}^\gamma\Gamma'=\gamma\Gamma'\gamma\inv$ acts \via $\gamma\inv g\gamma$.
This defines an adjoint equivalence (isomorphism)
\[
c_{\gamma}:\MMod{\Gamma';\CR}\simeq\MMod{{}^\gamma\Gamma';\CR}:c_{\gamma\inv}
\]
that restricts to permutation modules.
The $c_?$ are called \emph{conjugation} functors.
\end{Rec}

These functors satisfy axioms reminiscent of Mackey functors (see \Cref{sec:mackey-functors}) and we will say more about this in \Cref{sec:mackey-functoriality}.
Here, we recall only the most substantial of these axioms, namely the Mackey formula.
\begin{Rem}
\label{Rem:Mackey-formula}%
Fix two (open, as always) subgroups $H,K\le\Gamma$.
The Mackey formula is a natural (but non-canonical) isomorphism of functors $\MMod{H;\CR}\to \MMod{K;\CR}$
\begin{equation*}
\Res^\Gamma_K\circ\Ind^\Gamma_H\simeq \oplus_{[g]\in K\backslash{}\Gamma/H}\ \Ind^K_{\Hg}\circ c_{g}\circ\Res^H_{\Kg}
\end{equation*}
where we write $[g]\in K\backslash{}\Gamma/H$ to mean that we have fixed a choice of a representative $g\in \gamma$ for every class $\gamma\in K\backslash{}\Gamma/H$.
From this, one deduces a Mackey formula for tensor products:
\begin{equation*}
\Ind^\Gamma_K\CR\otimes \Ind^\Gamma_HW
\simeq\Ind^\Gamma_K
\left(
\Res^\Gamma_K\Ind^\Gamma_HW
\right)
\simeq\oplus_{[g]\in K\backslash{}\Gamma/H}\Ind^\Gamma_{\Hg}c_{g}\Res^H_{\Kg}W.
\end{equation*}
In particular, for $W=\CR$, we get
\begin{equation}
\label{eq:mackey-formula-tensor-special}
\CR(\Gamma/K)\otimes\CR(\Gamma/H)\simeq\oplus_{[g]\in K\backslash{}\Gamma/H}\,\CR(\Gamma/(\Hg))
\end{equation}
where the isomorphism may be explicitly described on the canonical $\CR$-basis as
\[
[\gamma]_K\otimes [\gamma g]_H\mapsfrom [\gamma]_{\Hg}\,.
\]
\end{Rem}

\begin{Cor}
\label{perm-hom-sets}%
Fix $H,K\le \Gamma$.
There is an $\CR$-linear isomorphism
\[
\CR(K\backslash\Gamma/H)\simeq\Hom_{\perm(\Gamma;\CR)}(\CR(\Gamma/K),\CR(\Gamma/H)).
\]
Explicitly, the isomorphism sends $[g]$ to the map $([\gamma]_K\mapsto \sum_{[x]\in K/\Hg} [\gamma xg]_H)$.
\end{Cor}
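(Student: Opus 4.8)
The plan is to chase the isomorphism through the chain of identifications already established in the excerpt, reducing everything to a Hom-computation of permutation modules via self-duality and the Mackey formula.

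First I would use self-duality of finitely generated permutation modules (\Cref{Rem:perm-dual}) to rewrite
\[
\Hom_{\perm(\Gamma;\CR)}(\CR(\Gamma/K),\CR(\Gamma/H))
\simeq
\Hom_{\perm(\Gamma;\CR)}(\CR,\CR(\Gamma/K)^*\otimes\CR(\Gamma/H))
\simeq
\Hom_{\perm(\Gamma;\CR)}(\CR,\CR(\Gamma/K)\otimes\CR(\Gamma/H)),
\]
where I have identified $\Hom(\CR(\Gamma/K),\CR(\Gamma/H))$ with $(\CR(\Gamma/K)^*\otimes\CR(\Gamma/H))^\Gamma=\Hom(\CR,\CR(\Gamma/K)\otimes\CR(\Gamma/H))$ (the $\Gamma$-invariants), using that in a tensor category with the self-dual object $\CR(\Gamma/K)$, maps out of it correspond to global sections of the internal hom. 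Then I would apply the tensor Mackey formula~\eqref{eq:mackey-formula-tensor-special} to get
\[
\Hom_{\perm(\Gamma;\CR)}(\CR,\CR(\Gamma/K)\otimes\CR(\Gamma/H))
\simeq
\bigoplus_{[g]\in K\backslash\Gamma/H}\Hom_{\perm(\Gamma;\CR)}(\CR,\CR(\Gamma/(\Hg))).
\]
Finally, $\Hom_{\perm(\Gamma;\CR)}(\CR,\CR(\Gamma/L))\cong \CR$ for any open $L\le\Gamma$ (the invariants of the permutation module $\CR(\Gamma/L)$ are free of rank one, spanned by the orbit sum $\sum_{[x]\in\Gamma/L}[x]_L$), so the right-hand side is free of rank $|K\backslash\Gamma/H|$, \ie $\CR(K\backslash\Gamma/H)$. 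Assembling these identifications gives the asserted $\CR$-linear isomorphism, and tracking a double coset $[g]$ through each step produces the explicit formula; I would verify the formula directly by computing the image of the basis element $[g]$ under~\eqref{eq:mackey-formula-tensor-special} composed with \Cref{perm-dual-morphism} (which describes the dual map explicitly as $y\mapsto\sum_{f(x)=y}x$).

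The one point requiring genuine care — and the main obstacle — is pinning down the explicit description of the map rather than merely counting ranks. One must keep straight which Mackey double coset indexing goes with which: the formula claims $[g]$ corresponds to $[\gamma]_K\mapsto\sum_{[x]\in K/\Hg}[\gamma xg]_H$, and matching this against the composite of the duality isomorphism~\eqref{eq:perm-self-dual}, the explicit tensor-Mackey basis identification $[\gamma]_K\otimes[\gamma g]_H\mapsfrom[\gamma]_{\Hg}$, and the description of dual morphisms in \Cref{perm-dual-morphism} is the delicate bookkeeping step. In particular one should check that the element $[\gamma]_{\Hg}$, pushed into $\CR(\Gamma/K)\otimes\CR(\Gamma/H)$ and then reinterpreted as a homomorphism via self-duality of $\CR(\Gamma/K)$, indeed evaluates on the basis vector $[\gamma]_K$ to the sum over the coset $K/\Hg$ as claimed — this comes down to unwinding \Cref{perm-dual-morphism} for the projection $\Gamma/\Hg\to\Gamma/K$ (whose fibers are exactly cosets in $K/\Hg$). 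Rank-additivity over the double cosets and the rank-one computation of $\Gamma$-invariants of a transitive permutation module are routine and can be stated without detailed proof.
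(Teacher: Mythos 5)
Your proposal is correct and follows essentially the same route as the paper: apply self-duality (\Cref{Rem:perm-dual}) to convert $\Hom(\CR(\Gamma/K),\CR(\Gamma/H))$ into maps out of the unit into $\CR(\Gamma/K)\otimes\CR(\Gamma/H)$, decompose via the tensor Mackey formula~\eqref{eq:mackey-formula-tensor-special}, identify the $\Gamma$-invariants of each transitive summand as free of rank one, and derive the explicit formula by unwinding the explicit form of the Mackey isomorphism together with \Cref{perm-dual-morphism}. The only difference is presentational — you spell out the invariants computation and the verification plan a bit more fully than the paper does — but every ingredient and step matches.
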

\begin{proof}
By \Cref{Rem:perm-dual}, morphisms $\CR(\Gamma/K)\to \CR(\Gamma/H)$ are in bijection with morphisms
\[
\CR\to\CR(\Gamma/K)\otimes\CR(\Gamma/H)\stackrel{\text{\eqref{eq:mackey-formula-tensor-special}}}{\simeq}\oplus_{[g]\in K\backslash{}\Gamma/H}\CR(\Gamma/\Hg),
\]
that is, with $\Gamma$-invariant elements of $\oplus_{[g]\in K\backslash{}\Gamma/H}\CR(\Gamma/\Hg)$.
Thus the first claim.
The explicit description of this isomorphism follows from the explicit description of~\eqref{eq:mackey-formula-tensor-special} in \Cref{Rem:Mackey-formula}.
\end{proof}

\begin{Rem}
\label{Rem:Inflation}%
Let $N\subseteq \Gamma$ be a \emph{closed but not necessarily open} normal subgroup and set $\bar{\Gamma}=\Gamma/N$.
Restriction along the quotient map $\Gamma\to\bar{\Gamma}$ induces the inflation functor
\[
\Infl_{\bar{\Gamma}}^{\Gamma}:\MMod{\bar\Gamma;\CR}\to\MMod{\Gamma;\CR}.
\]
For later use we record the following fact.
\end{Rem}

\begin{Lem}
\label{Lem:Infl-fully-faithful}%
The inflation functor is fully faithful on permutation modules:
\[
\Infl_{\bar{\Gamma}}^{\Gamma}:\perm(\bar\Gamma;\CR)\into\perm(\Gamma;\CR).
\]
\end{Lem}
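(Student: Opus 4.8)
The plan is to reduce the statement to the description of Hom-sets in $\perm$ given by \Cref{perm-hom-sets}, and to check that inflation is compatible with these descriptions. First I would recall that a permutation $\bar\Gamma$-module is a direct sum of modules $\CR(\bar\Gamma/\bar H)$ for open subgroups $\bar H \le \bar\Gamma$, and that under inflation $\Infl^\Gamma_{\bar\Gamma}\CR(\bar\Gamma/\bar H) \cong \CR(\Gamma/H)$ where $H \le \Gamma$ is the preimage of $\bar H$ along the quotient map $\Gamma \to \bar\Gamma$; indeed the underlying set $\bar\Gamma/\bar H$ with its $\Gamma$-action (via $\Gamma \to \bar\Gamma$) is $\Gamma$-equivariantly isomorphic to $\Gamma/H$, because $N \subseteq H$. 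Note $H$ is open in $\Gamma$ since $\bar H$ is open in $\bar\Gamma$ and the quotient map is continuous and open. Since both source and target categories are additive and inflation is additive, it suffices to check full faithfulness on these generating objects, i.e. to show that
\[
\Infl^\Gamma_{\bar\Gamma}\colon \Hom_{\perm(\bar\Gamma;\CR)}\big(\CR(\bar\Gamma/\bar K),\CR(\bar\Gamma/\bar H)\big) \to \Hom_{\perm(\Gamma;\CR)}\big(\CR(\Gamma/K),\CR(\Gamma/H)\big)
\]
is an isomorphism, where $H, K$ are the preimages of $\bar H, \bar K$.

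The key point is then that the double coset space is unchanged: the quotient map induces a bijection $K\backslash\Gamma/H \isoto \bar K\backslash\bar\Gamma/\bar H$, because $N \subseteq K$ (equivalently $N\subseteq H$), so that $K g H$ is the full preimage of $\bar K \bar g \bar H$. By \Cref{perm-hom-sets} both Hom-$\CR$-modules are free on the respective double coset spaces, with explicit bases indexed by $[g]$ and $[\bar g]$. I would then check that inflation sends the basis element of the source indexed by $[\bar g]$ to the basis element of the target indexed by $[g]$ for any representative $g$ mapping to $\bar g$: this is immediate from the explicit formula $[\gamma]_{\bar K}\mapsto \sum_{[\bar x]\in \bar K/\bar H^{\bar g}}[\gamma\bar x\bar g]_{\bar H}$ in \Cref{perm-hom-sets}, since under the identifications $\bar\Gamma/\bar H \cong \Gamma/H$ etc.\ the index set $\bar K/\bar H^{\bar g}$ corresponds to $K/H^g$ and the formula matches on the nose. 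Hence inflation carries a basis bijectively to a basis and is therefore an $\CR$-linear isomorphism on each Hom-module.

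The main obstacle, such as it is, is purely bookkeeping: one must be careful that $N$ being closed but possibly \emph{non-open} does not cause trouble. It does not, because the relevant subgroups $\bar H \le \bar\Gamma$ are required open, so their preimages $H \le \Gamma$ are open and contain $N$, and all the finiteness used in \Cref{perm-hom-sets} (finite double coset spaces, finite $\Gamma$-sets $\Gamma/H$) survives. One should also remark that the essential image of $\Infl^\Gamma_{\bar\Gamma}$ on $\perm(\bar\Gamma;\CR)$ consists exactly of the finitely generated permutation $\Gamma$-modules all of whose point-stabilizers contain $N$; this is not needed for full faithfulness but clarifies the picture. Apart from these remarks the argument is a direct translation through \Cref{perm-hom-sets}, so I would keep the write-up short.
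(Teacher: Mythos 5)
Your proposal is correct and takes essentially the same route as the paper: reduce to the generating objects $\CR(\bar\Gamma/\bar H)$, invoke \Cref{perm-hom-sets} to identify both Hom-modules with free $\CR$-modules on double coset spaces, and observe that inflation induces the evident bijection $\bar K\backslash\bar\Gamma/\bar H \cong K\backslash\Gamma/H$. The paper's version is terser but the argument is identical; your extra care about the explicit basis elements and the non-open normal subgroup is harmless elaboration.
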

\begin{proof}
This follows immediately from \Cref{perm-hom-sets}. Indeed, if $N\subset K,H\le \Gamma$ and if we set $\bar{H}=H/N$, $\bar{K}=K/N$, then the map on hom sets induced by inflation
\[
\Hom_{\perm(\bar\Gamma;\CR)}(\CR(\bar\Gamma/\bar K),\CR(\bar\Gamma/\bar H))\to \Hom_{\perm(\Gamma;\CR)}(\CR(\Gamma/K),\CR(\Gamma/H))
\]
corresponds to the isomorphism $\CR(\bar K\backslash\bar\Gamma/\bar H)\cong\CR(K\backslash\Gamma/H)$ of double cosets.
\end{proof}

\section{The derived category of permutation modules}
\label{sec:DPerm}%

In this section we define one of the central objects in this paper, the derived category of permutation modules, and establish some of its fundamental properties.

\begin{Def}
\label{Def:Gamma-acyclics}%
An object $X\in\K(\Perm(\Gamma;\CR))$ is said to be \emph{$\Gamma$-acyclic} if for each open subgroup $H\le \Gamma$, the complex $X^H$ of $H$-fixed points is acyclic.
A morphism $f$ in $\K(\Perm(\Gamma;\CR))$ is said to be a \emph{$\Gamma$-quasi-isomorphism} if its cone is $\Gamma$-acyclic.
The full subcategory of $\Gamma$-acyclic objects is denoted by $\K_{\Gaac}(\Perm(\Gamma;\CR))$, and the class of $\Gamma$-quasi-isomorphisms is denoted by $QI_\Gamma$.
\end{Def}
\begin{Rem}
\label{Rem:Gamma-qis}%
Of course, $f:X\to Y$ is a $\Gamma$-quasi-isomorphism if and only if, for each open $H\le\Gamma$, the morphism $f^H:X^H\to Y^H$ is a quasi-isomorphism.
\end{Rem}

\begin{Rem}
\label{Rem:Gamma-acyclics}%
For all~$X\in\K(\Perm(\Gamma;\CR))$, $H\le \Gamma$, and $n\in\bbZ$, we have
\begin{equation*}
\Hom_{\K(\MMod{\Gamma;\CR})}(\CR(\Gamma/H)[n],X)=\Hom_{\K(\MMod{\CR})}(\CR[n],X^H)=\Hm_n(X^H).
\end{equation*}
Hence we obtain the identification
\begin{equation}
\label{eq:Gaac-orthogonal}%
\K_{\Gaac}(\Perm(\Gamma;\CR))=\{\CR(\Gamma/H)\mid H\le \Gamma\}^\perp=\left(\Kb(\perm(\Gamma;\CR))\right)^\perp
\end{equation}
with the right orthogonal complement of finitely generated permutation modules in~$\K(\Perm(\Gamma;\CR))$.
\end{Rem}

\begin{Rem}
\label{Rem:DPerm-vs-KPerm}%
In general, $\K_{\Gaac}(\Perm(\Gamma;\CR))$ is non-zero.
For example, let $\Gamma=C_p=\langle \sigma\mid \sigma^p=1\rangle$ be a cyclic group of odd prime order, let $\CR=\kk$ be a field of characteristic $p$, and consider the complex
\[
\xymatrix@C=3em{
  X=
  &
  \cdots
  \ar[r]^-{\smat{\sigma-1&\eta\\\epsilon&0}}
  &
  kC_p\oplus k
  \ar[r]^{\smat{\alpha&\eta\\\epsilon&0}}
  &
  kC_p\oplus k
  \ar[r]^{\smat{\sigma-1&\eta\\\epsilon&0}}
  &
  kC_p\oplus k
  \ar[r]^-{\smat{\alpha&\eta\\\epsilon&0}}
  &
  \cdots,
}
\]
where $\eta:k\to kC_p$ and $\epsilon:kC_p\to k$ are the unit and counit, respectively, and where $\alpha$ is multiplication by $\sum_{i=0}^{p-1} i\sigma^i$.
It is acyclic and stays so after taking $C_p$-fixed points.
On the other hand, $X$ is not contractible. Indeed, the inclusion $K=\ker\smat{\sigma-1&\eta\\\epsilon&0}\into kC_p\oplus k$ cannot split in $\MMod{C_p;k}$ by Krull-Schmidt, because the $k$-vector space~$K$ is 2-dimensional and $p>2$.
\end{Rem}

\begin{Def}
\label{Def:DPerm}%
The \emph{(big) derived category of permutation modules} of $\Gamma$ with coefficients in $\CR$ is the Verdier localization:
\[
\DPerm(\Gamma;\CR)=\K(\Perm(\Gamma;\CR))[QI_\Gamma\inv]=\frac{\K(\Perm(\Gamma;\CR))}{\K_{\Gaac}(\Perm(\Gamma;\CR))}.
\]
\end{Def}

All we know \apriori is that $\DPerm(\Gamma;\CR)$ is a triangulated category.
Before establishing some of its additional fundamental structures and properties we need to recall general facts from~\cite{neeman:thomason-localization}; see~\cite[Chapter~9]{neeman:book-tricats} or~\cite{krause:localization-theory-tricat}.

\begin{Rec}
\label{Rec:compact-perfect}%
Let $\cS$ be a triangulated category with all (small) coproducts.
An object $X\in\cS$ is called compact if $\Hom_{\cS}(X,-)$ preserves coproducts.
The subcategory of compact objects is denoted by $\cS^c$.
And $\cS$ is compactly generated if there is a set of compact objects $\cG\subset\cS^c$ which generates $\cS$ as a localizing subcategory. The latter condition is equivalent to the right orthogonal complement $\cG^\perp=\{Y\in\cS\mid \Hom(\Sigma^nX,Y)=0, \forall n\in\bbZ,X\in\cG\}$ being zero.
\end{Rec}
\begin{Rec}
\label{Rec:Neeman-loc}%
Let $\cS$ and~$\cT$ be triangulated categories with all coproducts.
\begin{enumerate}[\rm(a)]
\item
\label{it:Neeman-Brown}%
Brown Representability:
If $\cS$ is compactly generated, an exact functor \mbox{$F\colon\cS\to \cT$} preserves coproducts if and only if it admits a right adjoint.
A left adjoint preserves compacts if and only if its right adjoint preserves coproducts.
\item
\label{it:Neeman-compacts}%
\label{it:Neeman-sub-quotient}%
Neeman-Thomason Localization:
Let $\cG\subseteq\cT^c$ be a set of compacts objects. Then $\cS=\Loc{\cG}$ is compactly generated and the inclusion $\cS\into \cT$ admits a right adjoint~$\Delta$, by~\eqref{it:Neeman-Brown}.
Moreover $\cS^c=\thick(\cG)$ and the composite $\cS\into\cT\onto\cT/\cS^\perp$ is an equivalence, with quasi-inverse induced by~$\Delta$.
\item
\label{it:Neeman-recollement}%
In the situation of \eqref{it:Neeman-compacts}, if moreover~$\cT$ is compactly generated, that right adjoint~$\Delta$ admits another right adjoint and we have a recollement of triangulated categories
\[
\xymatrix@R=2em{
\cS
\ar@{ >->}@/_2pc/[d]_-{\incl}
\ar@{ >->}@/^2pc/[d]_{}
\\
\cT
\ar@{->>}@/_2pc/[d]_{L}
\ar@{->>}@/^2pc/[d]^{}
\ar@{->>}[u]^{\Delta}
\\
\cS^\perp
\ar@{ >->}[u]^{\incl}
}
\]
\end{enumerate}
\end{Rec}

Note that $\K(\Perm(\Gamma;\CR))$ admits small coproducts. We may therefore consider the localizing subcategory generated by permutation modules.
\begin{Prop}
\label{DPerm-as-subcategory}%
The composite
\[
\Loc{\CR(\Gamma/H)\mid H\le\Gamma}\into \K(\Perm(\Gamma;\CR))\onto\DPerm(\Gamma;\CR)
\]
is an equivalence.
\end{Prop}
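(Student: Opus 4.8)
The plan is to deduce this from the Neeman--Thomason localization theorem (\Cref{Rec:Neeman-loc}\eqref{it:Neeman-sub-quotient}), applied to the ambient triangulated category $\cT=\K(\Perm(\Gamma;\CR))$, which has all small coproducts, and the set of compact objects $\cG=\SET{\CR(\Gamma/H)}{H\le\Gamma}$. First I would check that each $\CR(\Gamma/H)$ is indeed compact in $\K(\Perm(\Gamma;\CR))$: by \Cref{Rem:Gamma-acyclics} we have $\Hom_{\K(\Perm(\Gamma;\CR))}(\CR(\Gamma/H)[n],X)=\Hm_n(X^H)$, and since taking $H$-fixed points and homology both commute with coproducts of complexes of modules (fixed points are a filtered colimit, or simply note $(-)^H=\Hom_{\MMod{\Gamma;\CR}}(\CR(\Gamma/H),-)$ with $\CR(\Gamma/H)$ finitely presented by \Cref{discrete-modules-grothendieck}, and homology commutes with coproducts), this Hom-functor preserves coproducts. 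Hence $\cG\subseteq\cT^c$.

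Next I would invoke \Cref{Rec:Neeman-loc}\eqref{it:Neeman-sub-quotient} directly with this data. It tells us that $\cS:=\Loc{\cG}=\Loc{\CR(\Gamma/H)\mid H\le\Gamma}$ is compactly generated, and that the composite $\cS\into\cT\onto\cT/\cS^\perp$ is an equivalence. So it remains only to identify $\cT/\cS^\perp$ with $\DPerm(\Gamma;\CR)$. By \Cref{Rem:Gamma-acyclics}, specifically the identification \eqref{eq:Gaac-orthogonal}, we have $\cS^\perp=\cG^\perp=\K_{\Gaac}(\Perm(\Gamma;\CR))$. Therefore $\cT/\cS^\perp=\K(\Perm(\Gamma;\CR))/\K_{\Gaac}(\Perm(\Gamma;\CR))$, which is exactly $\DPerm(\Gamma;\CR)$ by \Cref{Def:DPerm}. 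This gives the claimed equivalence.

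The only genuine point requiring care — and what I expect to be the main (minor) obstacle — is the verification that $\cG^\perp$ really equals $\cS^\perp=(\Loc{\cG})^\perp$, i.e. that right-orthogonality to the generators is the same as right-orthogonality to the whole localizing subcategory they generate. This is standard: $\cG^\perp$ is itself a localizing subcategory's orthogonal, hence closed under (co)products, suspensions and triangles in the appropriate variable, so $\cG\subseteq{}^\perp(\cG^\perp)$ forces $\Loc{\cG}\subseteq{}^\perp(\cG^\perp)$, giving $\cG^\perp\subseteq(\Loc{\cG})^\perp$; the reverse inclusion is trivial since $\cG\subseteq\Loc{\cG}$. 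Combined with \eqref{eq:Gaac-orthogonal} this pins down $\cS^\perp$ as the subcategory of $\Gamma$-acyclics, and the proof is complete. All other steps are formal consequences of the cited recollections.
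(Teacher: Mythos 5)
Your proof is correct and follows exactly the paper's own argument: verify compactness of the $\CR(\Gamma/H)$ via \Cref{Rem:Gamma-acyclics}, apply Neeman--Thomason localization (\Cref{Rec:Neeman-loc}\eqref{it:Neeman-sub-quotient}) with $\cG=\SET{\CR(\Gamma/H)}{H\le\Gamma}$, and identify $\cS^\perp=\K_{\Gaac}(\Perm(\Gamma;\CR))$ via~\eqref{eq:Gaac-orthogonal}. The extra paragraph justifying $\cG^\perp=(\Loc{\cG})^\perp$ is a sound elaboration of a step the paper leaves implicit.
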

\begin{proof}
Since homology and the fixed-point functors preserve coproducts, it follows from \Cref{Rem:Gamma-acyclics} that the $\CR(\Gamma/H)$ are compact objects in $\K(\Perm(\Gamma;\CR))$, for all $H\le \Gamma$.
The result then follows directly from \Cref{Rec:Neeman-loc}\,\eqref{it:Neeman-sub-quotient} for $\cG:=\SET{\CR(\Gamma/H)}{H\le\Gamma}$ and $\cT:=\K(\Perm(\Gamma;\CR))$.
Indeed, by~\eqref{eq:Gaac-orthogonal}, the right orthogonal of $\cS=\Loc{\cG}$ is $\cS^\perp=\K_{\Gaac}(\Perm(\Gamma;\CR))$.
\end{proof}

\begin{Cor}
\label{Cor:DPerm-compacts}%
The triangulated category $\DPerm(\Gamma;\CR)$ is compactly generated and its subcategory of compact objects is canonically equivalent to the thick subcategory of $\K(\Perm(\Gamma;\CR))$ generated by permutation modules:
\begin{equation}
\label{eq:DPerm-compacts}%
\DPerm(\Gamma;\CR)^c\cong\thick(\perm(\Gamma;\CR))=\Kb(\perm(\Gamma;\CR)^{\natural})
\end{equation}
\end{Cor}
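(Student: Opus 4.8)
The statement has two parts: that $\DPerm(\Gamma;\CR)$ is compactly generated with compact part equivalent to $\thick(\perm(\Gamma;\CR))$, and that this thick subcategory coincides with $\Kb(\perm(\Gamma;\CR)^\natural)$. The first part is essentially bookkeeping on top of what has already been proved. By \Cref{DPerm-as-subcategory} we may identify $\DPerm(\Gamma;\CR)$ with $\Loc{\CR(\Gamma/H)\mid H\le\Gamma}$ inside $\K(\Perm(\Gamma;\CR))$. As noted in the proof of that proposition, each $\CR(\Gamma/H)$ is a compact object of $\K(\Perm(\Gamma;\CR))$, because by \Cref{Rem:Gamma-acyclics} the functor $\Hom(\CR(\Gamma/H)[n],-)$ computes $\Hm_n((-)^H)$, and both fixed points and homology commute with coproducts. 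Applying \Cref{Rec:Neeman-loc}\,\eqref{it:Neeman-compacts} with $\cG=\SET{\CR(\Gamma/H)}{H\le\Gamma}$ and $\cT=\K(\Perm(\Gamma;\CR))$ immediately gives that $\DPerm(\Gamma;\CR)=\Loc{\cG}$ is compactly generated and that $\DPerm(\Gamma;\CR)^c=\thick(\cG)=\thick(\perm(\Gamma;\CR))$, the last equality because every finitely generated permutation module is a finite direct sum of the $\CR(\Gamma/H)$.

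It remains to identify $\thick(\perm(\Gamma;\CR))$, computed inside $\K(\Perm(\Gamma;\CR))$, with $\Kb(\perm(\Gamma;\CR)^\natural)$. The plan is as follows. First, $\Kb(\perm(\Gamma;\CR))$ sits inside $\K(\Perm(\Gamma;\CR))$ as the full subcategory of bounded complexes of finitely generated permutation modules, and it is thick in the larger homotopy category since it is already triangulated and contains the generators; hence $\thick(\perm(\Gamma;\CR))\supseteq\Kb(\perm(\Gamma;\CR))$. More to the point, $\Kb(\perm(\Gamma;\CR))$ \emph{is} the thick subcategory generated by the degree-zero objects $\CR(\Gamma/H)$ \emph{within itself}: this is the standard fact that the bounded homotopy category of an additive category $\cA$ is the thick closure of $\cA$ inside $\K(\cA)$, proved by a stupid-truncation/brutal-filtration induction on the length of a bounded complex together with the octahedral axiom. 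Applying this to $\cA=\perm(\Gamma;\CR)$ gives $\thick_{\K(\Perm)}(\perm(\Gamma;\CR))\subseteq\Kb(\perm(\Gamma;\CR))^\natural$, the idempotent completion being taken because `thick' in the sense of Neeman-Thomason includes direct summands while the naive brutal-truncation argument only produces direct summands up to splitting idempotents. Conversely $\Kb(\perm(\Gamma;\CR))^\natural\subseteq\DPerm(\Gamma;\CR)^c$ since the compacts in a compactly generated triangulated category form an idempotent-complete (thick) subcategory, by a theorem of Bökstedt--Neeman. Combining the two inclusions gives $\DPerm(\Gamma;\CR)^c=\Kb(\perm(\Gamma;\CR))^\natural$.

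Finally one must pass the idempotent completion through the bounded homotopy category functor, i.e.\ identify $\Kb(\perm(\Gamma;\CR))^\natural$ with $\Kb(\perm(\Gamma;\CR)^\natural)$. This is the content of a lemma of Balmer--Schlichting: for any additive category $\cA$, the canonical functor $\Kb(\cA)\to\Kb(\cA^\natural)$ identifies the target with the idempotent completion of the source as a triangulated category. Invoking that lemma for $\cA=\perm(\Gamma;\CR)$ completes the proof. I expect the only genuine subtlety to be the careful handling of idempotent completions at each stage — distinguishing $\thick$ in the Neeman sense (closed under summands) from the naive triangulated subcategory generated, and ensuring the two external citations (Bökstedt--Neeman on idempotent-completeness of compacts, and Balmer--Schlichting on $\Kb(\cA^\natural)=\Kb(\cA)^\natural$) are applied correctly; the triangulated-category manipulations themselves are routine.
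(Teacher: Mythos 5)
Your proof is correct and takes essentially the same route as the paper: identify $\DPerm(\Gamma;\CR)$ with the localizing subcategory generated by the compact objects $\CR(\Gamma/H)$ via \Cref{DPerm-as-subcategory}, then invoke Neeman--Thomason localization (\Cref{Rec:Neeman-loc}\,\eqref{it:Neeman-compacts}) to conclude. The paper's proof is a one-line citation of exactly those two facts and leaves the identification $\thick(\perm(\Gamma;\CR))=\Kb(\perm(\Gamma;\CR)^\natural)$ implicit as standard; you have correctly fleshed it out via brutal truncation together with the Balmer--Schlichting lemma $\Kb(\cA)^\natural\simeq\Kb(\cA^\natural)$, which is indeed the right way to justify the appearance of the idempotent completion.
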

\begin{proof}
This follows from \Cref{DPerm-as-subcategory}, by \Cref{Rec:Neeman-loc}\,\eqref{it:Neeman-compacts}.
\end{proof}

\begin{Rem}
\label{Rem:DPerm-switch}%
In~\cite[\Cref{pmcs:Rem:DPerm}]{balmer-gallauer:resol-big} we \emph{defined} $\DPerm(\Gamma;\CR)$ as the localizing subcategory of $\K(\Perm(\Gamma;\CR))$ generated by (transitive) permutation modules.
\Cref{DPerm-as-subcategory} shows that this definition is compatible with ours, and from now on, we will switch freely between viewing $\DPerm(\Gamma;\CR)$ as a quotient or a subcategory of $\K(\Perm(\Gamma;\CR))$.
\end{Rem}

Since $\Perm(\Gamma;\CR)$ is a tensor category (\Cref{Not:perm}) and the tensor product commutes with coproducts, the category $\K(\Perm(\Gamma;\CR))$ is tensor triangulated and the tensor product commutes with coproducts.
We now observe that this tensor structure restricts to $\DPerm(\Gamma;\CR)$.
\begin{Cor}
\label{DPerm-tensor}%
The subcategory $\DPerm(\Gamma;\CR)\subseteq\K(\Perm(\Gamma;\CR))$ is closed under tensor products and thereby inherits a tensor triangulated structure.
In particular, the tensor product commutes with small coproducts.
\end{Cor}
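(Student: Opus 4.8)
The plan is to exploit the description of $\DPerm(\Gamma;\CR)$ as the localizing subcategory $\Loc{\CR(\Gamma/H)\mid H\le\Gamma}$ of $\K(\Perm(\Gamma;\CR))$ provided by \Cref{DPerm-as-subcategory} (see also \Cref{Rem:DPerm-switch}), together with the facts recalled just above that $\K(\Perm(\Gamma;\CR))$ is tensor triangulated with tensor product exact in each variable and commuting with small coproducts. The key input is that $\DPerm(\Gamma;\CR)$ contains all tensor products of \emph{generators}: by the Mackey formula~\eqref{eq:mackey-formula-tensor-special}, $\CR(\Gamma/K)\otimes\CR(\Gamma/H)$ is a finite direct sum of transitive permutation modules $\CR(\Gamma/(\Hg))$, hence lies in $\perm(\Gamma;\CR)\subseteq\DPerm(\Gamma;\CR)$.

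From here I would run the standard two-step ``localizing subcategory swallowing the generators'' argument. First, fix an open subgroup $K\le\Gamma$ and consider the full subcategory
\[
\cE_K:=\SET{Y\in\DPerm(\Gamma;\CR)}{\CR(\Gamma/K)\otimes Y\in\DPerm(\Gamma;\CR)}.
\]
Because $\CR(\Gamma/K)\otimes(-)$ is exact and preserves coproducts, and because $\DPerm(\Gamma;\CR)$ is a triangulated subcategory of $\K(\Perm(\Gamma;\CR))$ closed under small coproducts, the subcategory $\cE_K$ is localizing; by the previous paragraph it contains every generator $\CR(\Gamma/H)$, whence $\cE_K=\DPerm(\Gamma;\CR)$. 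Second, fix $Y\in\DPerm(\Gamma;\CR)$ and apply the same reasoning to the subcategory $\SET{X\in\DPerm(\Gamma;\CR)}{X\otimes Y\in\DPerm(\Gamma;\CR)}$: it is localizing and, by the first step, contains all $\CR(\Gamma/K)$, hence equals $\DPerm(\Gamma;\CR)$. This shows $\DPerm(\Gamma;\CR)$ is closed under $\otimes$ inside $\K(\Perm(\Gamma;\CR))$.

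It then remains to record the formal consequences. A full triangulated subcategory of a tensor triangulated category that is closed under the tensor product and contains the unit inherits a tensor triangulated structure; here the unit $\CR=\CR(\Gamma/\Gamma)$ lies in $\perm(\Gamma;\CR)\subseteq\DPerm(\Gamma;\CR)$, and the associativity, unit and symmetry constraints are simply restricted from $\K(\Perm(\Gamma;\CR))$. Finally, since $\DPerm(\Gamma;\CR)$ is closed under small coproducts in $\K(\Perm(\Gamma;\CR))$ and the tensor product commutes with coproducts there, it commutes with coproducts in $\DPerm(\Gamma;\CR)$ as well. I do not expect a genuine obstacle: the only point needing care is the claim that each $\cE_K$ is a \emph{localizing} subcategory, which reduces precisely to $\DPerm(\Gamma;\CR)$ being localizing in $\K(\Perm(\Gamma;\CR))$ --- the content of \Cref{DPerm-as-subcategory} --- combined with exactness and coproduct-preservation of the tensor product.
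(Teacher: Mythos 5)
Your proof is correct and follows essentially the same route as the paper: the paper's one-line proof observes that the compact objects $\Kb(\perm(\Gamma;\CR)^\natural)$ are closed under tensor products (which is the Mackey-formula observation you make for the generators), and leaves implicit exactly the two-step localizing-subcategory argument you spell out. You have simply filled in the details the paper takes for granted.
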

\begin{proof}
It suffices to observe that the subcategory of compact objects $\Kb(\perm(\Gamma;\CR)^\natural)$ is closed under tensor products in $\K(\Perm(\Gamma;\CR))$.
\end{proof}

\begin{Rem}
\label{Rem:DPerm-Bousfield}%
The quotient functor $\cat{K}\onto\cat{K}[QI_\Gamma\inv]$ realizes $\DPerm(\Gamma;\CR)$ as a Bousfield \emph{co}localization of~$\cat{K}=\K(\Perm(\Gamma;\CR))$.
We will later see (\Cref{Rem:DPerm-KPerm-recollement}) that it is also a Bousfield localization (\ie that quotient also admits a fully faithful right adjoint), at least if $\Gamma$ is finite.
This will become more transparent once we translate the question into the language of cohomological Mackey functors as we start doing in the next section.
\end{Rem}

We end this section with the `derived' analogue of \Cref{Lem:Infl-fully-faithful}.
\begin{Lem}
\label{Lem:Infl-fully-faithful-DPerm}%
Let $N\subseteq\Gamma$ be a \emph{closed but not necessarily open} normal subgroup and set $\bar\Gamma=\Gamma/N$.
Then inflation induces a fully faithful embedding
\[
\Infl_{\bar{\Gamma}}^{\Gamma}:\DPerm(\bar\Gamma;\CR)\to\DPerm(\Gamma;\CR)
\]
with image the localizing subcategory generated by $\CR(\Gamma/K)$ where $N\subseteq K\le\Gamma$.
\end{Lem}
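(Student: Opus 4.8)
The plan is to build the functor using the localizing-subcategory picture of $\DPerm$ from \Cref{Rem:DPerm-switch}, check full faithfulness first on the compact generators, and then bootstrap by dévissage. To begin, inflation along $\Gamma\onto\bar\Gamma=\Gamma/N$ sends the transitive permutation module $\CR(\bar\Gamma/\bar H)$ to $\CR(\Gamma/H)$, where $H\le\Gamma$ is the preimage of $\bar H$ (so that $N\subseteq H$), and it commutes with arbitrary coproducts; hence it restricts to an additive functor $\Perm(\bar\Gamma;\CR)\to\Perm(\Gamma;\CR)$ and induces a coproduct-preserving triangulated functor $\K(\Perm(\bar\Gamma;\CR))\to\K(\Perm(\Gamma;\CR))$. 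Since it carries the generating set $\SET{\CR(\bar\Gamma/\bar H)}{\bar H\le\bar\Gamma}$ of $\DPerm(\bar\Gamma;\CR)$ (viewed as a localizing subcategory of $\K(\Perm(\bar\Gamma;\CR))$, by \Cref{Rem:DPerm-switch}) into $\SET{\CR(\Gamma/K)}{N\subseteq K\le\Gamma}$, and a coproduct-preserving triangulated functor sends $\Loc{\cG}$ into $\Loc{F\cG}$, it restricts to a coproduct-preserving triangulated functor
\[
\Infl_{\bar\Gamma}^{\Gamma}\colon\DPerm(\bar\Gamma;\CR)\longrightarrow\Loc{\CR(\Gamma/K)\mid N\subseteq K\le\Gamma}\subseteq\DPerm(\Gamma;\CR).
\]
(Alternatively, one may observe directly that inflation sends $QI_{\bar\Gamma}$ into $QI_{\Gamma}$ --- because $(\Infl M)^{H}=M^{HN/N}$ --- so that it descends to the Verdier quotients; but we will work with the subcategory picture.)

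Next, full faithfulness on compact generators. Since permutation modules are concentrated in degree $0$, \Cref{Rem:Gamma-acyclics} gives $\Hom_{\DPerm(\Gamma;\CR)}(\CR(\Gamma/K),\CR(\Gamma/H)[n])=0$ for $n\neq0$ and $=\Hom_{\perm(\Gamma;\CR)}(\CR(\Gamma/K),\CR(\Gamma/H))$ for $n=0$, and likewise over $\bar\Gamma$. Under these identifications the map induced by $\Infl_{\bar\Gamma}^{\Gamma}$ is precisely the one of \Cref{Lem:Infl-fully-faithful}, hence bijective. Thus $\Hom_{\DPerm(\bar\Gamma;\CR)}(g,g'[n])\isoto\Hom_{\DPerm(\Gamma;\CR)}(\Infl g,\Infl g'[n])$ for all $n\in\bbZ$ and all compact generators $g,g'$ of the form $\CR(\bar\Gamma/\bar H)$.

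Now the bootstrap. Because $\Infl_{\bar\Gamma}^{\Gamma}$ preserves coproducts and sends compact generators to compact objects of $\DPerm(\Gamma;\CR)$ (\Cref{Cor:DPerm-compacts}), it preserves all compacts, so by \Cref{Rec:Neeman-loc}\,\eqref{it:Neeman-Brown} it admits a right adjoint $\rho$ which itself preserves coproducts. Full faithfulness is equivalent to the unit $\eta_Y\colon Y\to\rho\,\Infl_{\bar\Gamma}^{\Gamma}Y$ being an isomorphism for every $Y$. For a compact generator $Y=g'$, the composite of $\eta_{g'}$ with the adjunction isomorphism identifies $\Hom(g,g'[n])\to\Hom(g,\rho\,\Infl g'[n])$ with the map of the previous paragraph, which is bijective for all $n$ and all generators $g$; hence $\cone(\eta_{g'})$ lies in $\{\CR(\bar\Gamma/\bar H)\mid \bar H\le\bar\Gamma\}^\perp$, which is zero. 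The full subcategory of those $Y$ for which $\eta_Y$ is invertible is localizing (both $\Infl_{\bar\Gamma}^{\Gamma}$ and $\rho$ are triangulated and preserve coproducts), so it contains all generators, hence equals $\DPerm(\bar\Gamma;\CR)$. This proves $\Infl_{\bar\Gamma}^{\Gamma}$ is fully faithful. Its essential image is then closed under coproducts (as $\Infl$ preserves them) and under cones (as $\Infl$ is full), so it is a localizing subcategory of $\DPerm(\Gamma;\CR)$; it contains every $\CR(\Gamma/K)=\Infl_{\bar\Gamma}^{\Gamma}\CR(\bar\Gamma/\bar K)$ with $N\subseteq K$, and by the first paragraph it is contained in the localizing subcategory they generate. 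Hence the image is exactly $\Loc{\CR(\Gamma/K)\mid N\subseteq K\le\Gamma}$.

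The only genuinely delicate point is the bookkeeping: keeping the two descriptions of $\DPerm$ (Verdier quotient versus localizing subcategory of $\K(\Perm)$) aligned, and verifying that inflation both lands inside and surjects onto the localizing subcategory generated by the $\CR(\Gamma/K)$ with $N\subseteq K$. Everything else is the standard dévissage over compact generators.
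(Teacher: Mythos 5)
Your proposal is correct and follows the same route as the paper's: both rest on the additive full-faithfulness of \Cref{Lem:Infl-fully-faithful}, promoted first to the compact parts and then (by the standard dévissage over compact generators, which the paper leaves implicit with ``the claim follows immediately'') to the big localizing subcategories. You have essentially just spelled out the dévissage step and verified full faithfulness on generators via \Cref{Rem:Gamma-acyclics} rather than the equivalent observation that a fully faithful additive functor induces a fully faithful functor on bounded homotopy categories of the idempotent completions.
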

\begin{proof}
Inflation induces a functor on compact objects (by \Cref{Cor:DPerm-compacts}):
\[
\Infl_{\bar{\Gamma}}^{\Gamma}:\Kb(\perm(\bar\Gamma;\CR)^\natural)\to\Kb(\perm(\Gamma;\CR)^\natural).
\]
This functor is fully faithful, by \Cref{Lem:Infl-fully-faithful}.
The claim follows immediately.
\end{proof}

\begin{Rem}
\label{Rem:Kbperm-union}%
On compact objects, the fully faithfulness of inflation allows us to reduce many questions to the case where $\Gamma$ is a finite group.
Indeed, it follows that
\[
\Kb(\perm(\Gamma;\CR)^\natural)=\bigcup_{N\normal\Gamma}\Kb(\perm(\Gamma/N;\CR)^\natural).
\]
\end{Rem}

\section{Mackey functors}
\label{sec:mackey-functors}

\begin{Rec}
\label{Rec:mackey-functors}%
Mackey functors for a finite group $G$ may be defined in several different ways, and we refer to~\cite{thevenaz-webb:structure-mackey} or~\cite{dellambrogio:1-mackey-revisited} for ample details.
In an elementary way, Mackey functors associate $\CR$-modules to subgroups of $G$, together with $\CR$-linear induction, restriction, and conjugation actions which satisfy a list of very sensible axioms, including the ``Mackey axiom'' which stipulates that Mackey's formula should hold (and gives the Mackey functors their name).

Equivalently, Mackey functors may be viewed as ``bifunctors''
\[
(M_*,M^*):\gasets[G]\to\MMod{\CR}
\]
on the category of finite $G$-sets, where the covariant part $M_*$ encodes induction and the contravariant part $M^*$ encodes restriction (and they both encode conjugation).
These bifunctors are meant to satisfy two axioms: additivity and, again, the Mackey axiom which can now be expressed as a base change formula for commuting the covariant and the contravariant part.

Both of these definitions readily generalize to profinite groups, and they are again shown to be equivalent in~\cite[Theorem~2.7]{bley-boltje:comack-profinite}.
Note that in the formulation in terms of bivariant functors, Mackey functors are perhaps better known as ``$\Gamma$-modulations''~\cite[Definition~1.5.10]{neukirch-schmidt-wingberg:cohomology-of-number-fields}.
\end{Rec}

\begin{Rec}
\label{correspondences-profinite}%
The most convenient description of Mackey functors for us is the following.
It is a special case of Mackey functors on compact closed categories~\cite{panchadcharam-ross:mackey-functors}.
Recall (\Cref{Rec:G-sets}) that $\gasets$ denotes the category of finite $\Gamma$-sets, and consider the associated category of spans, denoted $\Span(\gasets)$.
In other words, the objects are the same as those of $\gasets$, and a morphism from $X$ to $Y$ is an isomorphism class of spans
\[
\xymatrix@R=1em{
  &
  Z
  \ar[ld]
  \ar[rd]
  \\
  X
  &&
  Y
}
\]
where two spans $(X,Z,Y)$ and $(X,Z',Y)$ are considered isomorphic if there is an isomorphism $Z\isoto Z'$ making the two obvious triangles commute.
The composite of two spans $(X,Z,Y)$ and $(Y,V,W)$ is obtained by forming the Cartesian square
\[
\xymatrix@R=1em{
  &&
  Z\times_YV
  \ar[ld]
  \ar[rd]
  \\
  &
  Z
  \ar[ld]
  \ar[rd]
  &&
  V
  \ar[ld]
  \ar[rd]
  \\
  X
  &&
  Y
  &&
  W.
  }
\]
We sometimes denote a span from $X$ to $Y$ by the symbol $X\tobar Y$.

Note that if $\phi:\Gamma\to \Gamma'$ is a morphism of pro-finite groups, there is a canonical functor $\gasets[\Gamma']\to\gasets$ by restricting along $\phi$, and an induced functor $\Span(\gasets[\Gamma'])\to\Span(\gasets)$.
If $\phi$ is surjective, we will typically denote the induced functor by $\Infl_{\Gamma'}^\Gamma$.
\end{Rec}

\begin{Lem}
Let $N\subseteq\Gamma$ be a closed but not necessarily open normal subgroup.
The functor
\[
\Infl_{\Gamma/N}^{\Gamma}:\Span(\gasets[\Gamma/N])\to\Span(\gasets[\Gamma])
\]
is faithful.
\end{Lem}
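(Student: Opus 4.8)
The claim is that $\Infl_{\Gamma/N}^{\Gamma}\colon\Span(\gasets[\Gamma/N])\to\Span(\gasets[\Gamma])$ is faithful. The plan is to unwind the definitions and reduce to a counting statement about double cosets, then invoke the inflation compatibility for $\Gamma$-sets that was already exploited in the proof of \Cref{Lem:Infl-fully-faithful}. First I would recall that a morphism $X\tobar Y$ in $\Span(\gasets[\Gamma/N])$ is an isomorphism class of spans $X\leftarrow Z\to Y$, and that the underlying functor $\gasets[\Gamma/N]\to\gasets[\Gamma]$ sends $(\Gamma/N)/(\bar H)$ to $\Gamma/H$ where $H$ is the preimage of $\bar H$ under $\Gamma\onto\Gamma/N$; more importantly this functor is fully faithful (this is the zero-dimensional content behind \Cref{Lem:Infl-fully-faithful}, coming from the bijection of double cosets $\bar K\backslash(\Gamma/N)/\bar H\cong K\backslash\Gamma/H$), and it \emph{preserves pullbacks}, since a fiber product of $\Gamma/N$-sets computed in $\gasets[\Gamma/N]$ maps to the corresponding fiber product of $\Gamma$-sets.

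Next I would reduce to the case of transitive (hence, by continuity, finite) $\Gamma/N$-sets $X$ and $Y$: both source and target categories are additive with biproducts given by disjoint union, the inflation functor is additive, and $\Hom$ in a span category splits as a product over the transitive summands of the source and a coproduct over those of the target, so faithfulness on transitive objects implies it in general. For $X=(\Gamma/N)/\bar K$ and $Y=(\Gamma/N)/\bar H$, a span $X\leftarrow Z\to Y$ decomposes $Z$ into transitive $\Gamma/N$-orbits, and each orbit that maps to both $X$ and $Y$ is forced (by transitivity of $X$ and $Y$) to have both legs surjective; such orbits are classified, up to isomorphism of spans, by the double coset space $\bar K\backslash(\Gamma/N)/\bar H$ together with a choice of subgroup of the corresponding stabilizer — exactly the same bookkeeping as in \Cref{perm-hom-sets}. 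The key point is then that all of this data (the double coset, the stabilizer subgroups, the maps) is carried \emph{injectively} to the analogous data for $\Gamma$, because the bijection $\bar K\backslash(\Gamma/N)/\bar H\cong K\backslash\Gamma/H$ is exactly the correspondence induced by inflation, and subgroups of $\bar L=L/N$ correspond bijectively to subgroups of $\Gamma$ between $N$ and $L$.

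Concretely, I would argue: if two spans $s,s'\colon X\tobar Y$ in $\Span(\gasets[\Gamma/N])$ become isomorphic after applying $\Infl_{\Gamma/N}^{\Gamma}$, then an isomorphism $\Infl(Z)\isoto\Infl(Z')$ over $\Infl(X)$ and $\Infl(Y)$ is, by full faithfulness of $\gasets[\Gamma/N]\into\gasets[\Gamma]$, already the image of a (necessarily unique) isomorphism $Z\isoto Z'$, and the two commuting triangles over $X$ and $Y$ descend as well by faithfulness of the same functor on morphisms. Hence $s=s'$ already in $\Span(\gasets[\Gamma/N])$. The main obstacle is purely bookkeeping: making sure the identification of spans "up to isomorphism" behaves correctly, i.e. that an isomorphism detected downstairs after inflation actually comes from upstairs — but this is immediate from the full faithfulness of the inflation functor on $\Gamma$-sets, which is the zero-dimensional shadow of \Cref{Lem:Infl-fully-faithful}. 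So no genuinely new input is needed beyond that lemma and the fact that inflation preserves fiber products.
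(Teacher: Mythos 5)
Your final paragraph is exactly the paper's proof: the inflation functor $\gasets[\Gamma/N]\to\gasets[\Gamma]$ is fully faithful, so any isomorphism of inflated spans (together with the two commuting triangles) descends, and faithfulness of the induced functor on span categories follows immediately. The earlier reduction to transitive objects and the double-coset bookkeeping are not needed and can be dropped --- full faithfulness of the underlying functor is the only input.
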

\begin{proof}
The functor $\Infl_{\Gamma/N}^{\Gamma}:\gasets[\Gamma/N]\to\gasets[\Gamma]$ is fully faithful from which the claim immediately follows.
\end{proof}

\begin{Cor}
\label{Span-union}%
The category $\Span(\gasets)$ is the filtered colimit of the (possibly non-full) subcategories $\Span(\gasets[\Gamma/N])$ for normal open subgroups $N\normal \Gamma$.
\qed
\end{Cor}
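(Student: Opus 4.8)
The plan is to verify directly the universal property of a filtered colimit of categories (computed in $\mathrm{Cat}$), which here splits into three tasks: that the indexing poset is filtered, that $\Span(\gasets)$ carries a cocone under the diagram, and that this cocone is bijective on objects and on each hom-set after passing to the colimit.

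First I would set up the formalities. The open normal subgroups $N\normal\Gamma$, ordered by reverse inclusion, form a directed poset, since the intersection $N\cap N'$ of two of them is again open and normal and refines both. For $N\subseteq N'$ the quotient homomorphism $\Gamma/N\onto\Gamma/N'$ induces, via $\Infl$, a functor $\Span(\gasets[\Gamma/N'])\to\Span(\gasets[\Gamma/N])$, and functoriality of inflation makes these compatible among themselves and with the functors $\Infl_{\Gamma/N}^{\Gamma}\colon\Span(\gasets[\Gamma/N])\to\Span(\gasets)$; this is the diagram and its cocone. All the functors in sight are inflations along surjections of profinite groups, so the previous Lemma tells me that each of them — including the transition functors — is faithful.

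Next comes the essential content: every object and every morphism of $\Span(\gasets)$ arrives from some finite level. For an object, a finite $\Gamma$-set $X$ has an action homomorphism with kernel $N_X:=\bigcap_{x\in X}\Gamma_x$, which is a finite intersection of open subgroups, hence open, and is automatically normal; and $X$, with its action factoring through the finite group $\Gamma/N_X$, lies in the image of $\gasets[\Gamma/N_X]$. For a morphism $X\tobar Y$, I would pick a representing span $X\leftarrow Z\rightarrow Y$ of finite $\Gamma$-sets and set $N:=N_X\cap N_Y\cap N_Z$, again open normal; then $X$, $Y$, $Z$ all descend to $\gasets[\Gamma/N]$, and the two legs, being $\Gamma$-equivariant maps whose source and target carry actions factoring through $\Gamma/N$, are morphisms of $\Gamma/N$-sets, so the span lies in the image of $\Span(\gasets[\Gamma/N])$.

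Finally, faithfulness closes the argument: it shows that each $\Hom$ in $\Span(\gasets[\Gamma/N])$ injects into the corresponding $\Hom$ in $\Span(\gasets)$, so the filtered colimit of hom-sets is literally their increasing union there; and that if a morphism is represented at both level $N_1$ and level $N_2$ with the same image in $\Span(\gasets)$, then after pushing both representatives down to $N_1\cap N_2$ they already agree there, by faithfulness at that level. Together with the previous paragraph this identifies $\Span(\gasets)$ with the claimed filtered colimit. I do not expect a genuine obstacle here; the only points needing care are (i) using the stabilizers of all three of $X$, $Y$, $Z$ when descending a span — the legs of a span in $\Span(\gasets)$ need not be surjective, so $N_Z$ alone would not suffice — and (ii) applying the previous Lemma's faithfulness to the transition functors and not merely to the $\Infl_{\Gamma/N}^{\Gamma}$.
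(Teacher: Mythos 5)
Your proof is correct and is precisely the elaboration the paper leaves implicit: the Corollary carries no proof text beyond \qed, being taken as immediate from the faithfulness Lemma, and your argument (directedness of open normal subgroups under reverse inclusion, descending objects and spans to a finite quotient via finite intersections of open stabilizers, and faithfulness of inflation at all levels to identify hom-sets with an increasing union) is exactly the intended reasoning. Your parenthetical cautions (i) and (ii) are apt and correctly handled.
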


\begin{Def}
\label{Def:profinite-Mackey-functor}%
The disjoint union endows $\Span(\gasets)$ with both finite coproducts and finite products.
It follows that group completing the hom-sets (which are commutative monoids) makes the category additive.
We denote the resulting additive category by
\[
\Omega(\Gamma).
\]

An \emph{($\CR$-linear) Mackey functor} is an additive functor $\Omega(\Gamma)\op\to\MMod{\CR}$.
It is called \emph{cohomological} if, in addition, it sends the span $\Gamma/H\xfrom{\pi}\Gamma/K\xto{\pi}\Gamma/H$ to multiplication by the index $[H:K]$ whenever $K\le H$, and where $\pi$ denotes the canonical projection.
A morphism of (cohomological) Mackey functors is a natural transformation of functors.
This defines abelian categories
\[
\CohMack[\CR]{\Gamma}\subseteq\Mack[\CR]{\Gamma}.
\]

Note that we may extend scalars to $\CR$ formally, denoting the resulting category by $\Omega_\CR(\Gamma)$, in which case Mackey functors are identified with $\CR$-linear (additive) functors $\Omega_\CR(\Gamma)\op\to\MMod{\CR}$.
\end{Def}

\begin{Rem}
\label{Rem:Mackey-comparison-bifunctor}%
There are canonical faithful embeddings
\[
\xymatrix@R=.3em{
  \gasets
  \ar[r]
  &
  \Span(\gasets)
  &
  (\gasets)\op
  \ar[l]
  \\
  (f:X\to Y)
  \ar@{|->}[r]
  &
  (X=X\xto{f}Y)
  \\
  &
  (Y\xfrom{f}X=X)
  &
  (f:X\to Y)
  \ar@{|->}[l]
}
\]
Composing with these embeddings (followed by the canonical functor to $\Omega(\Gamma)$) we obtain from every Mackey functor $M:\Omega(\Gamma)\op\to\MMod{\CR}$ a bifunctor $(M_*,M^*)$ on $\gasets$.
For $\Gamma$ finite, this identifies Mackey functors in the `new' sense of \Cref{Def:profinite-Mackey-functor} with Mackey functors in the `old' sense of \Cref{Rec:mackey-functors}~\cite{lindner:mackey-functors}.
It now follows formally from \Cref{Span-union} that the same is true for pro-finite groups $\Gamma$.
\end{Rem}

\begin{Rem}
\label{Rem:spans-properties}%
There is an auto-duality $\Omega(\Gamma)\op\simeq\Omega(\Gamma)$ which sends a span $X\lto Z\to Y$ to $Y\lto Z\to X$ (and is the identity on objects).
In particular, we could also define a Mackey functor to be an additive functor $\Omega(\Gamma)\to\MMod{\CR}$.

The symmetric monoidal structure on $\gasets$ (which is the Cartesian product on underlying sets, see \Cref{Rec:G-sets}) induces a tensor structure on $\Omega(\Gamma)$.
\end{Rem}

\begin{Lem}
\label{Omega-to-perm}%
Let $\CR(-):\gasets\to\perm(\Gamma;\CR)$ be the functor of~\eqref{eq:CR(-)}, and let $\CR(-)^*:\gasets\op\to\perm(\Gamma;\CR)$ be that functor followed by taking duals.
There is an essentially unique (necessarily additive) functor $\CR(-):\Omega(\Gamma)\to\perm(\Gamma;\CR)$ such that the following diagram commutes up to isomorphisms (the horizontal arrows being the canonical embeddings, \Cref{Rem:Mackey-comparison-bifunctor}):
\[
\xymatrix{
  \gasets
  \ar[rd]_{\CR(-)}
  \ar[r]
  &
  \Omega(\Gamma)
  \ar[d]^{\CR(-)}
  &
  \gasets\op
  \ar[ld]^{\CR(-)^*}
  \ar[l]
  \\
  &
  \perm(\Gamma;\CR)
  }
\]
\end{Lem}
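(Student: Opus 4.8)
The plan is to build the functor $\CR(-)\colon\Omega(\Gamma)\to\perm(\Gamma;\CR)$ explicitly, guided by the way an arbitrary span decomposes, and to read off essential uniqueness from the very same decomposition.

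\emph{Uniqueness.} Both embeddings of \Cref{Rem:Mackey-comparison-bifunctor} are bijective on objects, so any functor $F$ extending them up to isomorphism must have $F(X)\cong\CR(X)$. Moreover, as recalled in \Cref{correspondences-profinite}, every span $X\xfrom{a}Z\xto{b}Y$ in $\Span(\gasets)$ is the composite of the ``contravariant'' span $X\xfrom{a}Z=Z$ with the ``covariant'' span $Z=Z\xto{b}Y$; hence $F$ is forced to send it to $\CR(b)\circ\CR(a)^*$, where $\CR(a)^*\colon\CR(X)\to\CR(Z)$ denotes the dual of $\CR(a)\colon\CR(Z)\to\CR(X)$ regarded as a morphism of permutation modules via the self-duality of \Cref{Rem:perm-dual}, explicitly $x\mapsto\sum_{a(z)=x}z$ by \Cref{perm-dual-morphism}. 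Since the hom-groups of $\Omega(\Gamma)$ are the group completions of the hom-monoids of $\Span(\gasets)$, additivity then pins $F$ down on all of $\Omega(\Gamma)$, which gives essential uniqueness.

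\emph{Existence.} I would define $\CR(-)$ on objects by $X\mapsto\CR(X)$ and on a span by the formula $X\xfrom{a}Z\xto{b}Y\mapsto\CR(b)\circ\CR(a)^*$ just extracted, and then check it is a well-defined additive functor. Independence of the choice of apex (i.e.\ of representative of the isomorphism class of span) is immediate from functoriality of $\CR(-)$ on $\gasets$; compatibility with identities is clear; additivity on hom-monoids — that the disjoint union of two parallel spans maps to the sum of their images — follows because $\CR(-)$ carries disjoint unions to direct sums and, by the explicit formula of \Cref{perm-dual-morphism}, the dual of a map $\CR(Z)\oplus\CR(Z')\to\CR(X)$ with components $\CR(a),\CR(a')$ has components $\CR(a)^*,\CR(a')^*$; one then group-completes. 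The one step requiring real work is compatibility with composition. Given composable spans $X\xfrom{a}Z\xto{b}Y$ and $Y\xfrom{c}V\xto{d}W$ with composite span $X\leftarrow P\to W$, where $P:=Z\times_YV$ with projections $p\colon P\to Z$ and $q\colon P\to V$, one has $\CR(d\circ q)=\CR(d)\circ\CR(q)$ and $\CR(a\circ p)^*=\CR(p)^*\circ\CR(a)^*$, so everything reduces to the base-change identity
\[
\CR(q)\circ\CR(p)^*=\CR(c)^*\circ\CR(b)\colon\CR(Z)\longrightarrow\CR(V).
\]
Using \Cref{perm-dual-morphism} and the description $P=\{(z,v)\mid b(z)=c(v)\}$ of the fibre product, both sides send a basis element $z\in Z$ to $\sum_{c(v)=b(z)}v$, hence they agree. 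This base-change identity is the heart of the matter; everything else is bookkeeping.

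\emph{Commutativity of the triangle.} The covariant embedding sends $f\colon X\to Y$ to the span with $a=\id_X$, $b=f$, so $\CR(-)$ of it is $\CR(f)\circ\CR(\id_X)^*=\CR(f)$; the contravariant embedding sends $f$ to the span with $a=f$, $b=\id_X$, so $\CR(-)$ of it is $\CR(f)^*$. Thus both triangles commute (strictly), completing the argument. One could alternatively obtain existence abstractly from a universal property of the span category — for instance from the formalism of Mackey functors on compact closed categories~\cite{panchadcharam-ross:mackey-functors} applied to the rigid tensor category $\perm(\Gamma;\CR)$ — but the hands-on construction has the advantage of recording the explicit formula $X\xfrom{a}Z\xto{b}Y\mapsto\CR(b)\circ\CR(a)^*$, which we use repeatedly later on.
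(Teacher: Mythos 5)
Your proof is correct and follows essentially the same route as the paper: both define the functor on a span $X\xfrom{a}Z\xto{b}Y$ by $\CR(b)\circ\CR(a)^*$ and both reduce the entire argument to the base-change identity, which is checked via the explicit description of dual morphisms in \Cref{perm-dual-morphism}. The only difference is presentational: the paper delegates the bookkeeping (well-definedness, functoriality, additivity, uniqueness) to the cited universal property of span categories, whereas you verify it by hand — a harmless expansion, and you even note the abstract alternative at the end.
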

\begin{proof}
As the category $\Omega(\Gamma)$ is obtained from $\Span(\gasets)$ by group-completing the homomorphism monoids, and since $\perm(\Gamma;\CR)$ is additive, it is clear that it suffices to prove the same statement with $\Span(\gasets)$ instead of $\Omega(\Gamma)$. In that case, we can verify the universal property of the category of spans~\cite[Prop.\,A.5.3]{balmer-dellambrogio:two-mackey}.

The extension $\CR(-):\Span(\gasets)\to\perm(\Gamma;\CR)$ is given by
\begin{align*}
  X&\mapsto \CR(X)\simeq\CR(X)^* &&\text{(\Cref{Rem:perm-dual})}\\
  (X\xfrom{f}Z\xto{g}Y) &\mapsto \left(\CR(X)\simeq\CR(X)^*\xto{f^*}\CR(Z)^*\simeq\CR(Z)\xto{g}\CR(Y)\right).
\end{align*}
It suffices to show the following property.
Given a Cartesian square in $\gasets$:
\[
\xymatrix{
  X
  \ar[r]^{\delta}
  \ar[d]_\gamma
  &
  Y
  \ar[d]^\alpha
  \\
  W
  \ar[r]_\beta
  &
  Z
}
\]
we want to show that $\alpha^*\circ \beta=\delta\circ\gamma^*$ as morphisms $\CR(W)\to\CR(Y)$ in $\perm(\Gamma;\CR)$.
Using \Cref{perm-dual-morphism}, we recognize the two morphisms as sending $w\in W$ to
\[
\sum_{y\in Y\,\mid\,\alpha(y)=\beta(w)}y\qquad \text{ and }\qquad \sum_{x\in X\,\mid\,\gamma(x)=w}\delta(x)
\]
respectively.
The fact that the square was Cartesian to start with implies that these two elements in $\CR(Y)$ are equal.
\end{proof}

\begin{Rem}
\label{Rem:R-on-morphisms}%
We record for later the following explicit description of the functor $\CR(-):\Omega(\Gamma)\to\perm(\Gamma;\CR)$ on spans such as
\begin{equation}
\label{eq:generic-span}
\vcenter{\xymatrix@R=.5em{
  &
  Z
  \ar[ld]_f
  \ar[rd]^g
  \\
  X
  &&
  Y.
}}
\end{equation}
It follows from \Cref{Omega-to-perm} and \Cref{perm-dual-morphism} that~\eqref{eq:generic-span} gets sent to the $\CR$-linear map $\CR(X)\to\CR(Y)$ extending
\[
x\longmapsto \sum_{z\in Z\,\mid\, f(z)=x}g(z).
\]
\end{Rem}

\begin{Rec}
\label{Rec:Omega-hom-sets}%
We will need the following description of the monoids of homomorphisms in $\Span(\gasets)$ from~\cite[Proposition~2.2]{thevenaz-webb:structure-mackey}.
Let $H,K\le \Gamma$.
Then $\Hom_{\Span(\gasets)}(\Gamma/K,\Gamma/H)$ is the free abelian monoid on the basis represented by diagrams
\begin{equation}
\label{eq:Omega-basis}
\vcenter{\xymatrix@R=.5em{
  &
  \Gamma/L
  \ar[dl]_{{}^g\pi^K_L}
  \ar[rd]^{\pi^H_{L}}
  \\
  \Gamma/K
  &
  &
  \Gamma/H
}}
\end{equation}
where $[g]\in K\backslash{}\Gamma/H$ and $L\le \Kg$ up to $\Kg$-conjugacy. The maps ${}^g\pi^K_L$ and $\pi^H_L$ are the obvious (twisted) projections sending $[\gamma]_L$ to  $[{}^g\gamma]_K$ and $[\gamma]_H$ respectively.
It follows that the group $\Hom_{\Omega(\Gamma)}(\Gamma/K,\Gamma/H)$ is free abelian on the same basis.

Note that by \Cref{Rem:R-on-morphisms}, the image of the span~\eqref{eq:Omega-basis} in $\perm(\Gamma;\CR)$ is the $\CR$-linear map $\CR(\Gamma/K)\to\CR(\Gamma/H)$ extending
\begin{equation}
\label{eq:R-on-basis}%
[\gamma]_K\longmapsto \sum_{[x]\in K/{}^{g\!}L}[\gamma xg]_H.
\end{equation}
\end{Rec}

\begin{Rem}
\label{Rem:span-factorization}%
On pro-finite groups, an important difference between Mackey functors and cohomological Mackey functors is that the latter are more strongly controlled by their values on finite quotients.
To make this precise, we start with the following elementary observation.
Consider a prototypical span~\eqref{eq:Omega-basis}.
We may rewrite it as
\[
\xymatrix@R=1em{
  &\Gamma/L
  \ar[d]^{\pi^{\Kg}_L}
  \\
  &\Gamma/\Kg
  \ar[ld]_{{}^g\pi^K_{\Kg}}
  \ar[rd]^{\pi_{\Kg}^H}
  \\
  \Gamma/K
  &&
  \Gamma/H
}
\]
and thereby recognize that it factors as a composite:
\[
\xymatrix@C=1.2em@R=.5em{
  &
  \Gamma/\Kg
  \ar@{=}[rd]
  \ar[ld]_-{{}^g\pi^K_{\Kg}}
  &
  &\Gamma/L
  \ar[ld]_-{\pi_L^{\Kg}}
  \ar[rd]^-{\pi_L^{\Kg}}
  &&
  \Gamma/\Kg
  \ar@{=}[ld]
  \ar[rd]^-{\pi^H_{\Kg}}
  \\
  \Gamma/K
  &&
  \Gamma/\Kg
  &&
  \Gamma/\Kg
  &&
  \Gamma/H.
}
\]
Recall that the span in the middle is taken to multiplication by $[\Kg:L]$ by each \emph{cohomological} Mackey functor.
In other words, the values a cohomological Mackey functor takes on spans $\Gamma/K\tobar\Gamma/H$, are determined by its restriction to $\Gamma/N$ for any $N\normal\Gamma$ contained in $H\cap K$.
\end{Rem}

\begin{Not}
\label{Not:Omega-ideal}%
Consider the additive category $\Omega_{\CR}(\Gamma)$.
We define a two-sided ideal of homomorphisms~$\cI_{\CR}(\Gamma)$ by giving a set of generators:
\begin{equation}
\label{eq:Omega-ideal-generators}%
\begin{split}
\xymatrix@R=.5em{
  &
  \Gamma/K
  \ar[ld]_{\pi^H_K}
  \ar[rd]^{\pi^H_K}
  \\
  \Gamma/H
  &&
  \Gamma/H
}
\end{split}
\quad - \quad [H:K]\cdot\id_{\Gamma/H}
\end{equation}
for each $K\le H\le\Gamma$.
\end{Not}

\begin{Prop}
\label{Omega-perm-quotient}%
The functor $\CR(-):\Omega_{\CR}(\Gamma)\to\perm(\Gamma;\CR)$ of \Cref{Omega-to-perm} induces an equivalence of additive categories
\begin{equation}
\label{eq:Omega-perm-quotient}%
\frac{\Omega_{\CR}(\Gamma)}{\cI_{\CR}(\Gamma)}\isoto\perm(\Gamma;\CR).
\end{equation}

\end{Prop}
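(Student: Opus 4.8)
The plan is to show that the functor $\CR(-)\colon\Omega_{\CR}(\Gamma)\to\perm(\Gamma;\CR)$ is full, essentially surjective, and kills exactly the ideal $\cI_{\CR}(\Gamma)$; then the induced functor on the quotient is automatically an equivalence. Essential surjectivity is immediate from \Cref{Not:perm}, since every object of $\perm(\Gamma;\CR)$ is a finite direct sum of $\CR(\Gamma/H)$'s and each such is in the image. Since all the categories involved are additive, it suffices to verify fullness and to identify the kernel on the generating hom-groups $\Hom_{\Omega_{\CR}(\Gamma)}(\Gamma/K,\Gamma/H)$ for $H,K\le\Gamma$.

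First I would set up the two sides explicitly. By \Cref{Rec:Omega-hom-sets}, $\Hom_{\Omega_{\CR}(\Gamma)}(\Gamma/K,\Gamma/H)$ is free abelian on the basis of twisted spans~\eqref{eq:Omega-basis}, indexed by pairs $([g],L)$ with $[g]\in K\backslash\Gamma/H$ and $L\le\Kg$ up to $\Kg$-conjugacy; tensoring with $\CR$ gives the free $\CR$-module on the same basis. On the target side, \Cref{perm-hom-sets} identifies $\Hom_{\perm(\Gamma;\CR)}(\CR(\Gamma/K),\CR(\Gamma/H))$ with the free $\CR$-module $\CR(K\backslash\Gamma/H)$, with the double coset $[g]$ going to the map $[\gamma]_K\mapsto\sum_{[x]\in K/\Hg}[\gamma xg]_H$. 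Using the explicit formula~\eqref{eq:R-on-basis} for $\CR(-)$ on a basis span, one sees that the span with datum $([g],L)$ is sent to $[\gamma]_K\mapsto\sum_{[x]\in K/{}^{g}L}[\gamma xg]_H$; in particular the span with $L=\Kg$ (the largest allowed $L$, for which ${}^gL=\Hg$) maps precisely to the generator of \Cref{perm-hom-sets} attached to $[g]$. This proves fullness: the generators of the target are hit.

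Next, the key computation is that $\CR(-)$ on these hom-groups is, up to a unitriangular change of basis, the map sending the basis vector $([g],L)$ to $[\Kg:L]$ times the generator $[g]$. The point is the factorization in \Cref{Rem:span-factorization}: the span $([g],L)$ factors as the pure restriction/conjugation span to $\Gamma/\Kg$, followed by the ``double projection'' span $\Gamma/\Kg\leftarrow\Gamma/L\rightarrow\Gamma/\Kg$, followed by the pure projection span to $\Gamma/H$. The middle span differs from $[\Kg:L]\cdot\id_{\Gamma/\Kg}$ by exactly a generator~\eqref{eq:Omega-ideal-generators} of $\cI_{\CR}(\Gamma)$ (for the pair $L\le\Kg$). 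Hence, modulo $\cI_{\CR}(\Gamma)$, the span $([g],L)$ is congruent to $[\Kg:L]$ times the span $([g],\Kg)$, which is the generator $[g]$. Therefore the quotient $\Hom_{\Omega_{\CR}(\Gamma)/\cI_{\CR}(\Gamma)}(\Gamma/K,\Gamma/H)$ is the free $\CR$-module on the single basis element $[g]$ per double coset $[g]\in K\backslash\Gamma/H$, and $\CR(-)$ sends this basis bijectively to the basis of $\CR(K\backslash\Gamma/H)\cong\Hom_{\perm}(\CR(\Gamma/K),\CR(\Gamma/H))$. This shows the induced functor is bijective on hom-groups, hence an equivalence.

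I expect the main obstacle to be the bookkeeping in the previous paragraph: one must check that, for a fixed pair $H,K$, the $\cI_{\CR}(\Gamma)$-congruences genuinely collapse the free module on all pairs $([g],L)$ down to the free module on the double cosets alone, with no further relations and no collapse across different double cosets. Concretely this means verifying that the ideal $\cI_{\CR}(\Gamma)$, restricted to $\Hom(\Gamma/K,\Gamma/H)$, is spanned exactly by the differences $([g],L)-[\Kg:L]\cdot([g],\Kg)$ as $L$ ranges over proper subgroups (up to conjugacy) of each $\Kg$ — i.e.\ that composing the generators~\eqref{eq:Omega-ideal-generators} on either side with basis spans, and expanding via the Mackey/span composition (Cartesian squares) and \Cref{Rec:Omega-hom-sets}, produces precisely these elements and nothing that links distinct $[g]$'s. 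This is a finite, essentially combinatorial check using the double-coset formula; once it is in place, the equivalence follows formally.
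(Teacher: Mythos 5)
Your ingredients are exactly the paper's: the explicit formula~\eqref{eq:R-on-basis} shows that each generator of~$\cI_{\CR}(\Gamma)$ dies in $\perm(\Gamma;\CR)$ (giving the descent to the quotient), that the ``special'' spans~\eqref{eq:special-span} (your $([g],\Kg)$) are carried bijectively onto the canonical basis of $\Hom_{\perm(\Gamma;\CR)}(\CR(\Gamma/K),\CR(\Gamma/H))\cong\CR(K\backslash\Gamma/H)$, and \Cref{Rem:span-factorization} shows that every span is congruent modulo~$\cI_{\CR}(\Gamma)$ to an $\CR$-linear combination of the special ones. Your computation that $([g],L)\equiv[\Kg:L]\cdot([g],\Kg)\pmod{\cI_{\CR}(\Gamma)}$ is also right.

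The one wrinkle is the step you yourself flag as the ``main obstacle.'' You want to \emph{identify} $\cI_{\CR}(\Gamma)$ on each hom-group — to check it is spanned exactly by the differences $([g],L)-[\Kg:L]\cdot([g],\Kg)$ with no further relations. That computation is not needed, and the paper does not do it. You already have two facts: (i) the quotient hom-group $\Hom_{\Omega_{\CR}(\Gamma)/\cI_{\CR}(\Gamma)}(\Gamma/K,\Gamma/H)$ is \emph{generated} as an $\CR$-module by the classes of the finitely many special spans $([g],\Kg)$; and (ii) the induced functor sends these generators to an $\CR$-basis of the \emph{free} module $\CR(K\backslash\Gamma/H)$. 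Together these already force the induced map on hom-groups to be an isomorphism: if $x$ is in the kernel, write $x=\sum_{[g]}r_{[g]}\,([g],\Kg)$ using (i); then $\sum r_{[g]}[g]=0$ in $\CR(K\backslash\Gamma/H)$, hence all $r_{[g]}=0$ by (ii), hence $x=0$. (In particular, freeness of the quotient is a consequence, not a prerequisite.) The paper packages the same argument via the free $\CR$-submodule $M\subseteq\Hom_{\Omega_{\CR}(\Gamma)}(\Gamma/K,\Gamma/H)$ on the special spans: $M$ maps isomorphically onto the target (your (ii)) and surjectively onto the quotient (your (i)), from which the induced map on the quotient is an isomorphism. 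So your ``Therefore the quotient is free\ldots'' is a non sequitur as written, but the gap is filled by the elementary observation above rather than by the combinatorial check you anticipate.
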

\begin{proof}
It is clear from~\eqref{eq:R-on-basis} that every generator~\eqref{eq:Omega-ideal-generators} is sent to 0 in $\perm(\Gamma;\CR)$.
As the functor $\CR(-)$ is additive, this yields the functor~\eqref{eq:Omega-perm-quotient} of the statement.

It is also clear that the functor is essentially surjective, and it remains to prove fully faithfulness.
Fix $H,K\le \Gamma$ and consider the induced homomorphism
\begin{equation}
\label{eq:Omega-perm-homs}%
\Hom_{\Omega_{\CR}(\Gamma)}(\Gamma/K,\Gamma/H)\to\Hom_{\perm(\Gamma;\CR)}(\CR(\Gamma/K),\CR(\Gamma/H)).
\end{equation}
The codomain of this homomorphism is isomorphic to $\CR(K\backslash{}\Gamma/H)$ by \Cref{perm-hom-sets}, and from~\eqref{eq:R-on-basis} we deduce that the span
\begin{equation}
\vcenter{\xymatrix@C=3em@R=.5em{
  &
  \Gamma/\Kg
  \ar[dl]_-{{}^g\pi^K_{\Kg}}
  \ar[dr]^-{\pi^H_{\Kg}}
  \\
  \Gamma/K
  &&
  \Gamma/H
}}\label{eq:special-span}
\end{equation}
gets sent to $[g]\in \CR(K\backslash{}\Gamma/H)$.
In other words, we have found an $\CR$-free submodule $M$ (on the basis of spans as in~\eqref{eq:special-span}, \cf \Cref{Rec:Omega-hom-sets}) of $\Hom_{\Omega_{\CR}(\Gamma)}(\Gamma/K,\Gamma/H)$ which maps isomorphically onto $\Hom_{\perm(\Gamma;\CR)}(\CR(\Gamma/K),\CR(\Gamma/H))$.
On the other hand, we saw in \Cref{Rem:span-factorization} that every span $\Gamma/K\tobar\Gamma/H$ is equivalent, modulo $\cI_{\CR}(\Gamma)$, to a linear combination of spans as in~(\ref{eq:special-span}).
This shows that the composite
\[
M\into \Hom_{\Omega_{\CR}(\Gamma)}(\Gamma/K,\Gamma/H)\onto\frac{\Hom_{\Omega_{\CR}(\Gamma)}(\Gamma/K,\Gamma/H)}{\cI_{\CR}(\Gamma)(\Gamma/K,\Gamma/H)}
\]
is also surjective and hence an isomorphism, and this concludes the proof.
\end{proof}

\begin{Cor}
\label{coh-mackey-perm}%
Let $M:\Omega(\Gamma)\op\to\MMod{\CR}$ be a Mackey functor and consider the following solid-arrows diagram:
\[
\xymatrix{
  \Omega(\Gamma)\op
  \ar[r]^M
  \ar[d]_{\CR(-)}
  &
  \MMod{\CR}
  \\
  \perm(\Gamma;\CR)\op
  \ar@{.>}[ru]_{\bar{M}}
}
\]
If an $\CR$-linear factorization $\bar{M}$ as indicated exists then it is unique.
Moreover, it does exist if and only if $M$ is cohomological.
\end{Cor}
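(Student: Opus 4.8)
The plan is to read the statement off \Cref{Omega-perm-quotient}, combined with the universal property of the quotient of an additive category by a two-sided ideal of morphisms.

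First I would reduce everything to a statement about $\Omega_{\CR}(\Gamma)$. By the last sentence of \Cref{Def:profinite-Mackey-functor}, the Mackey functor $M$ is the same datum as an $\CR$-linear functor $M\colon\Omega_{\CR}(\Gamma)\op\to\MMod{\CR}$; and since $\perm(\Gamma;\CR)\op$ is $\CR$-linear and $\CR(-)$ on $\Omega_{\CR}(\Gamma)$ restricts to $\CR(-)$ on $\Omega(\Gamma)$, giving an $\CR$-linear $\bar M$ with $\bar M\circ\CR(-)=M$ along $\Omega(\Gamma)\op$ is the same as giving such a factorization along the $\CR$-linear functor $\CR(-)\colon\Omega_{\CR}(\Gamma)\op\to\perm(\Gamma;\CR)\op$. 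By \Cref{Omega-perm-quotient}, this last functor is, up to the displayed equivalence, the opposite of the quotient functor $Q\colon\Omega_{\CR}(\Gamma)\to\Omega_{\CR}(\Gamma)/\cI_{\CR}(\Gamma)$; in particular it is full and essentially surjective, which already yields the uniqueness assertion.

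Next I would invoke the universal property of the quotient by $\cI_{\CR}(\Gamma)$: an $\CR$-linear functor on $\Omega_{\CR}(\Gamma)\op$ factors through $Q\op$ if and only if it annihilates every morphism in the ideal, equivalently each of the generators~\eqref{eq:Omega-ideal-generators} (read in the opposite category). This reduces the existence clause to the claim that $M$ kills the generators~\eqref{eq:Omega-ideal-generators} precisely when $M$ is cohomological. But the generator attached to a pair $K\le H$ being sent to $0$ says exactly that $M$ carries the span $\Gamma/H\xfrom{\pi}\Gamma/K\xto{\pi}\Gamma/H$ to multiplication by $[H:K]$, which is verbatim the defining condition for a cohomological Mackey functor in \Cref{Def:profinite-Mackey-functor}. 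That completes the argument.

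The only point needing a word of care, and the one place I expect a (very mild) obstacle, is the passage to opposite categories: one must check that $\cI_{\CR}(\Gamma)\op$ is again generated by the spans~\eqref{eq:Omega-ideal-generators}, so that $\bigl(\Omega_{\CR}(\Gamma)/\cI_{\CR}(\Gamma)\bigr)\op=\Omega_{\CR}(\Gamma)\op/\cI_{\CR}(\Gamma)\op$ is still identified with $\perm(\Gamma;\CR)\op$ via $\CR(-)\op$. This holds because each generating span in~\eqref{eq:Omega-ideal-generators} is an endomorphism of $\Gamma/H$ both of whose legs are the same projection $\pi^H_K$, hence is fixed by the canonical self-duality of $\Span(\gasets)$ recalled in \Cref{Rem:spans-properties}. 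With that observation the whole argument runs through in $\Omega_{\CR}(\Gamma)\op$; all the real content is \Cref{Omega-perm-quotient}, and the rest is the formal behaviour of quotient categories by ideals.
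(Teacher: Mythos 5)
Your proof is correct and follows essentially the same route as the paper: reduce to $\Omega_\CR(\Gamma)$ and apply \Cref{Omega-perm-quotient} together with the universal property of the quotient by an ideal. The final worry about opposite categories is overcautious — for any ideal $\cI$ in an additive category $\cat{A}$ one has $(\cat{A}/\cI)\op=\cat{A}\op/\cI\op$ with $\cI\op$ generated by the opposites of the same generators, so the self-duality of $\Span(\gasets)$ is not needed — but this does not affect the argument.
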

\begin{proof}
Both $\CR(-)$ and $M$ factor uniquely through $\Omega_{\CR}(\Gamma)\op$, and we may translate the diagram and the statement into one with $\Omega(\Gamma)\op$ replaced by $\Omega_{\CR}(\Gamma)\op$.
By \Cref{Omega-perm-quotient}, $\perm(\Gamma;\CR)$ is a quotient of $\Omega_{\CR}(\Gamma)$ from which the first statement follows.
It also follows from \Cref{Omega-perm-quotient} that $\bar{M}$ exists if and only if $M$ sends generators~(\ref{eq:Omega-ideal-generators}) of $\cI_{\CR}(\Gamma)$ to 0 which is precisely the condition that $M$ be cohomological.
\end{proof}

Let us also state explicitly the following immediate consequence, essentially due to Yoshida~\cite{yoshida:G-functors}.
\begin{Cor}
\label{Cor:comack-presh-perm}%
Precomposition with $\CR(-):\Omega(\Gamma)\to\perm(\Gamma;\CR)$ induces an equivalence of $\CR$-linear Grothendieck abelian categories
\[
\CohMack[\CR]{\Gamma}\simeq\PSh{\CR}{\perm(\Gamma;\CR)}
\]
where we write $\PSh{\CR}{\perm(\Gamma;\CR)}$ for the category of $\CR$-linear (additive) presheaves $\perm(\Gamma;\CR)\op\to\MMod{\CR}$, \aka the category of right-modules over the additive category~$\perm(\Gamma;\CR)$.
\qed
\end{Cor}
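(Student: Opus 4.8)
The plan is to leverage \Cref{Omega-perm-quotient} together with \Cref{coh-mackey-perm} and a standard descent/colimit argument along finite quotients. First I would observe that precomposition with $\CR(-)\colon\Omega(\Gamma)\to\perm(\Gamma;\CR)$ takes an $\CR$-linear presheaf $\perm(\Gamma;\CR)\op\to\MMod{\CR}$ to an $\CR$-linear functor $\Omega(\Gamma)\op\to\MMod{\CR}$; by \Cref{coh-mackey-perm} such functors are exactly the \emph{cohomological} Mackey functors, and conversely every cohomological Mackey functor arises this way via the (unique) factorization $\bar M$. This already gives a bijection on objects. For morphisms, since $\CR(-)$ is (essentially) surjective on objects and full (being a quotient functor by \Cref{Omega-perm-quotient}), precomposition is fully faithful on natural transformations: a natural transformation between the restricted functors is determined by, and extends to, a natural transformation between the original presheaves because the components are indexed by the objects $\CR(\Gamma/H)$ which already lie in the image, and naturality against all maps in $\perm(\Gamma;\CR)$ follows from naturality against the images of spans, these being all the maps. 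Hence precomposition is an equivalence of $\CR$-linear categories.

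Next I would address the remaining adjectives. That $\PSh{\CR}{\perm(\Gamma;\CR)}$ is $\CR$-linear abelian is immediate, since module categories over an $\CR$-linear additive category (equivalently, additive presheaf categories) are abelian with kernels and cokernels computed objectwise in $\MMod{\CR}$. That it is Grothendieck is the standard fact that for any small $\CR$-linear category $\cat{A}$, the category of right $\cat{A}$-modules has a generating set, namely the representables $\cat{A}(-,a)$ for $a$ ranging over (isomorphism classes of) objects, exact filtered colimits (computed objectwise), and hence satisfies Grothendieck's axioms; here $\perm(\Gamma;\CR)$ is essentially small because its objects are direct sums of the $\CR(\Gamma/H)$ for open $H\le\Gamma$, of which there is only a set. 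Transporting along the equivalence, $\CohMack[\CR]{\Gamma}$ inherits all of this, which also reconciles with its description in \Cref{Def:profinite-Mackey-functor} as a category of additive functors $\Omega(\Gamma)\op\to\MMod{\CR}$ cut out by the cohomologicality condition (a condition defining a full subcategory closed under kernels, cokernels, and colimits, hence again Grothendieck abelian).

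One subtlety worth spelling out is the profinite aspect: $\Omega(\Gamma)$ is \emph{not} essentially small in the naive sense relevant to ``$\Omega(\Gamma)\op\to\MMod{\CR}$ being additive'', but \Cref{Span-union} writes it as a filtered colimit of the $\Span(\gasets[\Gamma/N])$, and \Cref{Rem:span-factorization} shows cohomological Mackey functors are determined by their restrictions to these finite levels; the same colimit description applies to $\perm(\Gamma;\CR)$ via \Cref{Rem:Kbperm-union}-style reasoning (more directly, every object of $\perm(\Gamma;\CR)$ is inflated from some $\perm(\Gamma/N;\CR)$), so nothing goes wrong. The main obstacle, such as it is, will be bookkeeping the precise sense in which the equivalence is compatible with both descriptions of cohomological Mackey functors (the bivariant one of \Cref{Rec:mackey-functors} and the $\Omega(\Gamma)$-functor one of \Cref{Def:profinite-Mackey-functor}) and making sure the ``$\CR$-linear presheaf'' side is literally right-modules over $\perm(\Gamma;\CR)$ rather than over $\Omega_\CR(\Gamma)$; but this is exactly what \Cref{Omega-perm-quotient} and \Cref{coh-mackey-perm} package up, so the corollary is genuinely immediate and the proof can be left as the one-line ``\qed'' it already carries. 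If a short remark is desired, I would simply cite Yoshida~\cite{yoshida:G-functors} and note that the ``only if'' direction (cohomologicality is necessary) is the content of \Cref{coh-mackey-perm} applied to the universal representable functors.
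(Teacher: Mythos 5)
Your proof is correct and follows the same route the paper has in mind: precomposition with $\CR(-)$ is fully faithful because $\CR(-)\colon\Omega_{\CR}(\Gamma)\to\perm(\Gamma;\CR)$ is full and essentially surjective (it is a quotient functor by \Cref{Omega-perm-quotient}), and by \Cref{coh-mackey-perm} its essential image under restriction is exactly the cohomological Mackey functors; the $\CR$-linear Grothendieck abelian structure is the standard one for additive presheaves on an essentially small additive category. One small correction to your last paragraph: $\Omega(\Gamma)$ \emph{is} essentially small (open subgroups of a profinite group form a set, hence so do isomorphism classes of finite $\Gamma$-sets), so the worry you raise there is unfounded and the colimit-along-finite-quotients discussion, while true, is not needed here.
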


\begin{Rem}
\label{Rem:coh-mackey-tensor}%
\Cref{Cor:comack-presh-perm} allows us to define a tensor structure on cohomological Mackey functors.
For this, recall that $\perm(\Gamma;\CR)$ is an $\CR$-linear tensor category.
There is an essentially unique way of endowing $\CR$-linear presheaves with an $\CR$-linear closed tensor structure such that the Yoneda functor $\perm(\Gamma;\CR)\to\PSh{\CR}{\perm(\Gamma;\CR)}$ is tensor, called the Day convolution product.
We will from now on view $\CohMack[\CR]{\Gamma}$ as an $\CR$-linear tensor category \via \Cref{Cor:comack-presh-perm} and Day convolution.

Concretely, given a cohomological Mackey functor $M$ viewed as an $\CR$-linear presheaf on $\perm(\Gamma;\CR)$, we may write it canonically as a colimit of representables, indexed by the category of permutation modules~$P$ over $M$:
\[
\colim_{P\to M}P\isoto M.
\]
In particular, we find that
\[
M\otimes M'\cong \colim_{P\to M,\,P'\to M'}P\otimes P'.
\]
\end{Rem}

\section{The functor of fixed-points}
\label{sec:perm-vs-comack}%

We now want to establish one of the equivalences of~\eqref{eq:triangle-derived}, namely
\begin{equation}
\label{eq:perm-cohmack}%
\DPerm(\Gamma;\CR)\isoto \D(\CohMack[\CR]{\Gamma})
\end{equation}
between the derived category of permutation modules $\DPerm(\Gamma;\CR)$ introduced in \Cref{sec:DPerm} and the derived category of the abelian category $\CohMack[\CR]{\Gamma}$ of cohomological Mackey functors discussed in \Cref{sec:mackey-functors}. To this end, we identify $\CohMack[\CR]{\Gamma}$ with the category of contravariant $\CR$-linear functors from $\perm(\Gamma;\CR)$ to $\MMod{\CR}$, as in \Cref{Cor:comack-presh-perm}.
Let us start by defining the functor in~\eqref{eq:perm-cohmack}.
\begin{Cons}
\label{Cons:FP}%
Note that $\perm(\Gamma;\CR)$ is a subcategory of~$\MMod{\Gamma;\CR}$. Hence any discrete module $M\in\MMod{\Gamma;\CR}$ defines by `restricted Yoneda' an $\CR$-linear functor
\[
\FP(M)=\Hom_{\MMod{\Gamma;\CR}}(-,M)\restr{\perm(\Gamma;\CR)}\colon \perm(\Gamma;\CR)\op\to \MMod{\CR}.
\]
This defines a functor~$\FP\colon \MMod{\Gamma;\CR}\to \PSh{\CR}{\perm(\Gamma;\CR)}=\CohMack[\CR]{\Gamma}$.
\end{Cons}
\begin{Rem}
\label{Rem:FP-in-cash}%
The notation $\FP$ should evoke `fixed points'. Indeed, when evaluated at one of the additive generators $\CR(\Gamma/H)$ of~$\perm(\Gamma;\CR)$, where $H\le \Gamma$, we get
\[
\big(\FP(M)\big)\big(\CR(\Gamma/H)\big)=\Hom(\CR(\Gamma/H),M)\cong M^{H}.
\]
\end{Rem}
\begin{Rem}
\label{Rem:LP}%
The functor $\FP$ admits a left adjoint $\FPL\colon \CohMack[\CR]{\Gamma}\to \MMod{\Gamma;\CR}$ given by left Kan extension of the inclusion $\perm(\Gamma;\CR)\into\MMod{\Gamma;\CR}$ along the ($\CR$-linear) Yoneda embedding $\perm(\Gamma;\CR)\into\PSh{\CR}{\perm(\Gamma;\CR)}$:
\[
\xymatrix{
  \perm(\Gamma;\CR)
  \ar@{ >->}[r]
  \ar@{ >->}[d]_-{\textrm{Yoneda}}
  &
  \MMod{\Gamma;\CR}
  \\
  \PSh{\CR}{\perm(\Gamma;\CR)}=\CohMack[\CR]{\Gamma}
  \ar@{.>}[ur]_(.6){\FPL}
}
\]
The value of $\FPL$ on a cohomological Mackey functor~$M$ is the colimit $\colim_{P\to M}P$ in $\MMod{\Gamma;\CR}$ of all finitely generated permutation modules $P$ over~$M$ in~$\CohMack[\CR]{\Gamma}$.
This is a standard universal property of the presheaf category.
The functor $\FPL$ is automatically a colimit-preserving tensor functor (\cf \Cref{Rem:coh-mackey-tensor}).
\end{Rem}

\begin{Lem}
\label{Lem:FP-omnibus}%
The functor $\FP:\MMod{\Gamma;\CR}\to\CohMack[\CR]{\Gamma}$ of \Cref{Cons:FP}
\begin{enumerate}[\rm(a)]
\item is $\CR$-linear and lax monoidal,
\item preserves filtered colimits (and thus coproducts), and
\item is fully faithful.
\end{enumerate}
\end{Lem}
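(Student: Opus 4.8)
The plan is to treat the three assertions in turn; (a) and (b) are formal and (c) carries the real content. For~(a), $\CR$-linearity of $\FP$ is immediate from the $\CR$-bilinearity of $\Hom_{\MMod{\Gamma;\CR}}(-,-)$. For the lax monoidal structure I would use the Day-convolution description of the tensor product on $\CohMack[\CR]{\Gamma}=\PSh{\CR}{\perm(\Gamma;\CR)}$ from \Cref{Rem:coh-mackey-tensor}: one has $\FP(M)\otimes\FP(N)\cong\colim_{P\to\FP(M),\,Q\to\FP(N)}(P\otimes Q)$, a colimit over representables, so a morphism from it to $\FP(M\otimes N)$ is the same as a compatible family of elements of $\FP(M\otimes N)(P\otimes Q)=\Hom_{\MMod{\Gamma;\CR}}(P\otimes Q,M\otimes N)$; since a map $P\to\FP(M)$ with $P\in\perm(\Gamma;\CR)$ is, by Yoneda, just a morphism $u\colon P\to M$ in $\MMod{\Gamma;\CR}$, the rule $(u\colon P\to M,\,v\colon Q\to N)\mapsto u\otimes v$ supplies such a family and hence the structure map $\FP(M)\otimes\FP(N)\to\FP(M\otimes N)$. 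For the unit, $\CR(\Gamma/\Gamma)=\unit$ already lies in $\perm(\Gamma;\CR)$, so $\FP(\unit)$ is the representable at $\unit$, which is the tensor unit of $\CohMack[\CR]{\Gamma}$, and the coherence map is an isomorphism; associativity and unitality are then a routine diagram chase.

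For~(b), colimits in $\PSh{\CR}{\perm(\Gamma;\CR)}$ are computed objectwise, so $\FP$ preserves filtered colimits if and only if, for each $P\in\perm(\Gamma;\CR)$, the functor $M\mapsto\FP(M)(P)=\Hom_{\MMod{\Gamma;\CR}}(P,M)$ does, \ie if and only if each such $P$ is finitely presented in $\MMod{\Gamma;\CR}$. This holds because $P$ is a finite direct sum of the $\CR(\Gamma/H)$, which are finitely presented by \Cref{discrete-modules-grothendieck}, and a finite direct sum of finitely presented objects is finitely presented. Being additive by~(a), $\FP$ also preserves finite coproducts, and an arbitrary coproduct is the filtered colimit of its finite subcoproducts; hence $\FP$ preserves all coproducts.

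For~(c) I would reformulate fully faithfulness of $\FP$ via the adjunction $\FPL\adj\FP$ of \Cref{Rem:LP}: it is equivalent to invertibility of the counit $\FPL\circ\FP\To\Id_{\MMod{\Gamma;\CR}}$. By the description of $\FPL$ in \Cref{Rem:LP}, $\FPL\FP(M)$ is the colimit in $\MMod{\Gamma;\CR}$ of all finitely generated permutation modules mapping to $M$ — a map $P\to\FP(M)$ being, by Yoneda, a morphism $P\to M$ in $\MMod{\Gamma;\CR}$ — and the counit at $M$ is the canonical comparison map. So the claim becomes: $\perm(\Gamma;\CR)$ is a dense subcategory of $\MMod{\Gamma;\CR}$, \ie this comparison is an isomorphism. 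It is surjective because every $m\in M$ has open stabilizer $H=\Gamma_m$ and lies in the image of $\CR(\Gamma/H)\to M$, $[1]_H\mapsto m$. To get injectivity, and more directly to produce the inverse, I would argue at the level of natural transformations: given $\alpha\colon\FP(M)\to\FP(N)$, set $g(m)=\alpha_{\CR(\Gamma/\Gamma_m)}(\widehat m)([1]_{\Gamma_m})$, where $\widehat m\colon\CR(\Gamma/\Gamma_m)\to M$ picks out $m$. Naturality of $\alpha$ along the projections $\CR(\Gamma/H')\to\CR(\Gamma/H)$ makes $g$ compatible with shrinking the subgroup, which with $\CR$-linearity of $\alpha$ gives $\CR$-linearity of $g$; naturality along the conjugation isomorphisms $\CR(\Gamma/\Gamma_m)\cong\CR(\Gamma/\Gamma_{\gamma m})$ gives $\Gamma$-equivariance; and one verifies that $\FP(g)=\alpha$ and that $g\mapsto\FP(g)$, $\alpha\mapsto g$ are mutually inverse. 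Faithfulness alone is clear, since any morphism $M\to N$ is determined by its values on the $\widehat m$.

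I expect the density step in~(c) to be the only real obstacle, the point being that it cannot be reduced to permutation modules via resolutions. One is tempted to take a presentation $P_1\to P_0\to M\to 0$ by permutation modules and apply $\FP$, but $\FP$ is not even right exact — the functor $\Hom_{\MMod{\Gamma;\CR}}(\CR(\Gamma/H),-)=(-)^H$ is only left exact, as the example in \Cref{Rem:DPerm-vs-KPerm} illustrates — so $\FP(M)$ is in general not the cokernel of $\FP(P_1)\to\FP(P_0)$ in $\CohMack[\CR]{\Gamma}$, which forces the hands-on argument above.
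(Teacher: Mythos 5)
Your proof is correct, and parts (a) and (b) agree in substance with the paper; for (a) the paper simply observes that $\FP$ is the right adjoint of the tensor functor $\FPL$, hence automatically lax monoidal, while your Day-convolution computation is the same fact unfolded.

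For part (c) you take a genuinely different route. Both arguments begin the same way — reduce fully faithfulness to invertibility of the counit $\FPL\FP(M)=\colim_{P\to M}P\to M$, and prove surjectivity from the openness of stabilizers. But where you then abandon the colimit and build the inverse of $\Hom(M,N)\to\Hom(\FP(M),\FP(N))$ directly, setting $g(m)=\alpha_{\CR(\Gamma/\Gamma_m)}(\widehat m)([1]_{\Gamma_m})$ and verifying $\CR$-linearity, $\Gamma$-equivariance, and $\FP(g)=\alpha$ via naturality along projections and conjugations, the paper finishes injectivity of the counit with a short zig-zag in the indexing category: if $f\colon P\to M$ kills $p\in P^H$, the map $\CR(\Gamma/H)\to P$, $[1]_H\mapsto p$, is an arrow in the slice over $M$ (its composite with $f$ is zero), and it connects the object $(P,f)$ to $(0,0)$, forcing $p$ to vanish in the colimit. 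The paper's span argument is shorter and needs no compatibility checks, whereas your construction is more explicit, makes faithfulness transparent for free, and is closer in spirit to a Tannakian reconstruction; you do, however, owe the reader the (elementary but nontrivial) verifications that $g$ is additive and equivariant, which the paper sidesteps entirely. Your closing observation that the density step cannot be reduced to a presentation by permutation modules because $\FP$ is only left exact (as the example in \Cref{Rem:DPerm-vs-KPerm} shows) is a correct and useful diagnosis of why some hands-on argument is unavoidable.
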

\begin{proof}
The first item is obvious (as right adjoint of a tensor functor) and so is the second by \Cref{Rem:FP-in-cash}.
For the last item, we evaluate the counit of the adjunction $\FPL\adj\FP$ on a discrete module $M\in\MMod{\Gamma;\CR}$, and find that it is given by the canonical morphism
\[
\colim_{P\to M}P\to M
\]
where the colimit is indexed by finitely generated permutation modules $P$ over $M$ (\cf \Cref{Rem:LP}).
As $M$ is discrete, this morphism is surjective (\cf \Cref{Rec:discrete-modules}).
To prove that the morphism is injective as well, let $f:P\to M$ be a map from a permutation module $P$, and let $p\in P$ such that $f(p)=0$.
There exists some $H\le \Gamma$ such that $p\in P^H$, thus a map $\CR(\Gamma/H)\to P$ which sends $[e]_H$ to $p$.
In the indexing category for the colimit we therefore find a span
\[
\xymatrix@R=.1em{
  &(P\xto{f} M)\\
  (\CR(\Gamma/H)\xto{0} M)
  \ar[ru]^-p
  \ar[rd]_-0
  \\
  &(0\xto{0} M)
}
\]
from which it follows that $p\in P$ vanishes in the colimit.
\end{proof}

We can now identify \emph{projective} cohomological Mackey functors.
\begin{Prop}[\cf {\cite[Theorem~16.5]{thevenaz-webb:structure-mackey}}]
\label{Prop:perm-proj-comack}%
The fixed-point functor $\FP$ induces equivalences of tensor categories:
\[
\xymatrix{
  \Perm(\Gamma;\CR)^{\natural}
  \ar[r]_-{\sim}^-{\FP}
  &
  \Proj(\CohMack[\CR]{\Gamma})
  \\
  \perm(\Gamma;\CR)^{\natural}
  \ar[r]_-{\sim}^-{\FP}
  \ar@{ >->}[u]
  &
  \proj(\CohMack[\CR]{\Gamma})
  \ar@{ >->}[u]
}
\]
where $\Proj$ stands for the subcategory of projective objects and $\proj$ for that of the finitely presented projective objects, in the Grothendieck category $\CohMack[\CR]{\Gamma}$.
\end{Prop}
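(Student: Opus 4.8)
The plan is to deduce the statement from the abstract characterization of projectives in a presheaf category, combined with \Cref{Cor:comack-presh-perm} identifying $\CohMack[\CR]{\Gamma}$ with $\PSh{\CR}{\perm(\Gamma;\CR)}$. Recall that in the category of $\CR$-linear presheaves on an additive category $\cA$, the representable functors (together with their retracts) are precisely the finitely presented projective objects, and arbitrary coproducts of representables together with their retracts are precisely \emph{all} the projective objects; equivalently, $\proj(\PSh{\CR}{\cA}) = (\mathbf{y}\cA)^{\natural}$ and $\Proj(\PSh{\CR}{\cA}) = (\mathrm{Add}(\mathbf{y}\cA))^{\natural}$, where $\mathbf{y}$ is the Yoneda embedding. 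Applying this with $\cA=\perm(\Gamma;\CR)$, and noting that under the equivalence of \Cref{Cor:comack-presh-perm} the Yoneda functor corresponds exactly to $\FP$ restricted to $\perm(\Gamma;\CR)$ (since $\FP(\CR(\Gamma/H))$ represents $M\mapsto M^H$, cf.\ \Cref{Rem:FP-in-cash}), we see that $\FP$ sends $\perm(\Gamma;\CR)$ into $\proj(\CohMack[\CR]{\Gamma})$, and sends $\Perm(\Gamma;\CR)$ — which is the category of (arbitrary) direct sums of the $\CR(\Gamma/H)$ inside $\MMod{\Gamma;\CR}$, hence $\mathrm{Add}$ of $\perm(\Gamma;\CR)$ — into $\Proj$.

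The key steps, in order. First, establish that $\FP$ is fully faithful on $\Perm(\Gamma;\CR)$: this is immediate from \Cref{Lem:FP-omnibus}(c), which gives full faithfulness on all of $\MMod{\Gamma;\CR}$. Second, conclude that $\FP$ is fully faithful on the idempotent completion $\Perm(\Gamma;\CR)^{\natural}$ — full faithfulness is inherited by Karoubi envelopes, and I should note that $\CohMack[\CR]{\Gamma}$, being abelian, is already idempotent-complete, so the functor $\Perm(\Gamma;\CR)^{\natural}\to\CohMack[\CR]{\Gamma}$ is defined and fully faithful with image inside $\Proj$. Third, prove essential surjectivity onto $\Proj(\CohMack[\CR]{\Gamma})$: given a projective cohomological Mackey functor $Q$, use that $\FPL$ is left adjoint to $\FP$ with $\FPL\circ(\text{Yoneda}) \cong \text{inclusion of }\perm(\Gamma;\CR)$ (cf.\ \Cref{Rem:LP}), and that $\FP\circ\FPL$ on a representable is the identity by full faithfulness; write $Q$ as a retract of a coproduct $\bigoplus_i \CR(\Gamma/H_i)$ of representables (possible since $Q$ is projective and such coproducts generate), apply $\FPL$ to get a retract diagram in $\MMod{\Gamma;\CR}$ exhibiting $\FPL(Q)$ as a retract of $\bigoplus_i \CR(\Gamma/H_i)$, hence $\FPL(Q)\in\Perm(\Gamma;\CR)^{\natural}$, and check via the counit that $\FP(\FPL(Q)) \cong Q$ — the counit $\FPL(Q)\to$ (nothing; rather $Q\to\FP\FPL(Q)$ the unit) is an isomorphism because it is so on each representable summand and $\FP,\FPL$ respect the relevant (co)limits. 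For the finitely generated versions, repeat the argument with finite coproducts: $\proj(\CohMack[\CR]{\Gamma})$ consists of retracts of \emph{finite} sums of representables, matching $\perm(\Gamma;\CR)^{\natural}$. Fourth, the tensor statement: $\FPL$ is a strong monoidal colimit-preserving functor (Day convolution, \Cref{Rem:LP} and \Cref{Rem:coh-mackey-tensor}), its right adjoint $\FP$ is lax monoidal (\Cref{Lem:FP-omnibus}(a)), and on the subcategories of (finitely generated) permutation modules both restrict to the symmetric monoidal structures which are compatible by construction of the Day convolution extending the tensor on $\perm(\Gamma;\CR)$; hence the displayed equivalences are tensor equivalences, and the square commutes because both $\FP$ and the inclusions are compatible with passing from $\perm$ to $\Perm$ and to Karoubi envelopes.

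The main obstacle I expect is the essential surjectivity onto $\Proj$ in the \emph{big} (non-finitely-generated) case: one must verify that $\FPL$ applied to an arbitrary-rank projective genuinely lands in $\Perm(\Gamma;\CR)^{\natural}$ — i.e.\ that a retract of an infinite coproduct of $\CR(\Gamma/H_i)$'s inside $\MMod{\Gamma;\CR}$ is again (up to idempotent completion) a permutation module — and that $\FPL$ commutes with the relevant infinite coproducts and preserves the splitting idempotents, so that $\FP\FPL(Q)\cong Q$ really holds. This hinges on $\FPL$ being colimit-preserving (hence coproduct-preserving) together with the full faithfulness of $\FP$ on $\Perm(\Gamma;\CR)$, so the coproduct $\bigoplus_i\CR(\Gamma/H_i)$ computed in $\MMod{\Gamma;\CR}$ maps under $\FP$ to the corresponding coproduct in $\CohMack[\CR]{\Gamma}$; the idempotent then transports across, and one invokes that $\MMod{\Gamma;\CR}$ is idempotent-complete. (Everything else is formal bookkeeping with adjunctions and Yoneda, so I would not belabor it.)
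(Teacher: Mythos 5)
Your proof is correct and follows essentially the same route as the paper: identify $\FP|_{\perm(\Gamma;\CR)}$ with the Yoneda embedding (hence fully faithful, and remaining so on the idempotent completion and after closing under coproducts, using that $\FP$ preserves coproducts), then use that a projective object in a presheaf category is a retract of a coproduct of representables, and that $\CohMack[\CR]{\Gamma}$ is generated by representables. Your detour through $\FPL$ in the essential-surjectivity step --- and the worries you raise at the end about $\FPL$ of an arbitrary-rank projective landing in $\Perm(\Gamma;\CR)^{\natural}$ --- is avoidable: once $Q$ is exhibited as a retract of $\bigoplus_i\FP(\CR(\Gamma/H_i))\cong\FP\bigl(\bigoplus_i\CR(\Gamma/H_i)\bigr)$, full faithfulness of $\FP$ transports the splitting idempotent back to an idempotent on $\bigoplus_i\CR(\Gamma/H_i)$ in $\Perm(\Gamma;\CR)$, which splits in $\Perm(\Gamma;\CR)^{\natural}$ by definition, and additivity of $\FP$ then shows this image is $Q$.
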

\begin{proof}
By Yoneda, we have for every $P\in\perm(\Gamma;\CR)$ that $\Hom_{\CohMack[\CR]{\Gamma}}(P,?)\cong ?(P)$ preserves all colimits (computed objectwise). It follows that every $P$ in~$\perm(\Gamma;\CR)$ is finitely presented projective in~$\CohMack[\CR]{\Gamma}$. Note that on~$\perm(\Gamma;\CR)$, the functor~$\FP$ is nothing but the Yoneda embedding. In particular it is fully-faithful, and remains so on $\perm(\Gamma;\CR)^\natural$ and on $\Perm(\Gamma;\CR)^\natural$ (closure under coproducts). So the two functors~$\FP$ of the statement are fully faithful and take values inside projectives, as indicated. Only their essential surjectivity remains to be seen.

Let $M\colon \perm(\Gamma;\CR)\op\to \MMod{\CR}$ be in~$\CohMack[\CR]{\Gamma}$.
Again by Yoneda, the map
\[
\Big(\coprod_{P\in \perm(\Gamma;\CR)\atop P\to M} \! P \ \Big)\quad\too \quad M
\]
is an epimorphism in~$\CohMack[\CR]{\Gamma}$. If $M$ is projective then this map must admit a section, showing that $M$ is a direct summand of a coproduct of~$P$'s. If moreover $M$ is finitely presented, that section must factor \via a finite coproduct.
\end{proof}

We now come to the announced equivalence~\eqref{eq:perm-cohmack}.
In view of \Cref{Prop:perm-proj-comack}, consider the functor $\FP\colon \K(\Perm(\Gamma;\CR))\to \K(\Proj(\CohMack[\CR]{\Gamma}))$ on chain complexes. Post-composing with the canonical quotient functor and pre-composing with the inclusion of \Cref{Rem:DPerm-switch}, we obtain
\begin{equation*}
\DPerm(\Gamma;\CR)\into \K(\Perm(\Gamma;\CR))\xto{\FP} \K(\Proj(\CohMack[\CR]{\Gamma}))\onto\D(\CohMack[\CR]{\Gamma}).
\end{equation*}
\begin{Cor}
\label{Cor:DPerm-comack}%
The above is an equivalence of tensor triangulated categories:
\begin{equation}
\label{eq:FP-perm-comack}%
\FP:\DPerm(\Gamma;\CR)\isoto\D(\CohMack[\CR]{\Gamma}).
\end{equation}
\end{Cor}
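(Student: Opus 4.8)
We must show that the composite
\[
\DPerm(\Gamma;\CR)\into \K(\Perm(\Gamma;\CR))\xto{\FP}\K(\Proj(\CohMack[\CR]{\Gamma}))\onto\D(\CohMack[\CR]{\Gamma})
\]
is a tensor-triangulated equivalence. The plan is to factor it through a localizing subcategory of $\K(\Proj(\CohMack[\CR]{\Gamma}))$ generated by compact objects and then apply the Neeman--Thomason localization theorem. First I would observe that on $\perm(\Gamma;\CR)$ the functor $\FP$ is nothing but the Yoneda embedding into $\PSh{\CR}{\perm(\Gamma;\CR)}=\CohMack[\CR]{\Gamma}$ (see the proof of \Cref{Prop:perm-proj-comack}), hence fully faithful; being moreover $\CR$-linear, exact on homotopy categories, and coproduct-preserving (\Cref{Lem:FP-omnibus}), it restricts to an equivalence
\[
\FP\colon \DPerm(\Gamma;\CR)=\Loc{\CR(\Gamma/H)\mid H\le\Gamma}\isoto \cS:=\Loc{P_H\mid H\le\Gamma}\subseteq\K(\Proj(\CohMack[\CR]{\Gamma})),
\]
where $P_H:=\FP(\CR(\Gamma/H))$ is the corresponding representable presheaf and we use \Cref{Rem:DPerm-switch}. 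So it suffices to prove that $\cS\into\K(\Proj(\CohMack[\CR]{\Gamma}))\onto\D(\CohMack[\CR]{\Gamma})$ is an equivalence.

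Each $P_H$ is representable, so $\Hom_{\CohMack[\CR]{\Gamma}}(P_H,-)$ is exact and preserves coproducts; consequently, for any complex $Y$ of projectives, $\Hom_\K(P_H[n],Y)\cong\Hom_{\CohMack[\CR]{\Gamma}}(P_H,\Hm_n(Y))\cong(\Hm_n(Y))(\CR(\Gamma/H))$, naturally in $Y$ and compatibly with coproducts. This shows that the $P_H$ are compact in $\K(\Proj(\CohMack[\CR]{\Gamma}))$, and --- since the $\CR(\Gamma/H)$ form a generating set of $\CohMack[\CR]{\Gamma}$ --- that the right orthogonal $\cS^\perp$ inside $\K(\Proj(\CohMack[\CR]{\Gamma}))$ is exactly $\Kac(\Proj(\CohMack[\CR]{\Gamma}))$, the acyclic complexes of projectives. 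By \Cref{Rec:Neeman-loc}\,\eqref{it:Neeman-sub-quotient}, the composite $\cS\into\K(\Proj(\CohMack[\CR]{\Gamma}))\onto\K(\Proj(\CohMack[\CR]{\Gamma}))/\Kac(\Proj(\CohMack[\CR]{\Gamma}))$ is then an equivalence. Finally I would invoke the standard fact that, $\CohMack[\CR]{\Gamma}\simeq\PSh{\CR}{\perm(\Gamma;\CR)}$ being a category of modules over a small $\CR$-linear category, every complex admits a $K$-projective resolution (Spaltenstein), whence the canonical functor $\K(\Proj(\CohMack[\CR]{\Gamma}))/\Kac(\Proj(\CohMack[\CR]{\Gamma}))\to\D(\CohMack[\CR]{\Gamma})$ is an equivalence. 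Tracing through the identifications, our composite is exactly $\DPerm(\Gamma;\CR)\xto{\FP}\cS\isoto\D(\CohMack[\CR]{\Gamma})$, an equivalence of triangulated categories.

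It remains to promote this to a tensor equivalence. By \Cref{Lem:FP-omnibus} the functor $\FP$ is lax monoidal, and on $\perm(\Gamma;\CR)$, where it is the Yoneda embedding for the Day convolution, it is strong monoidal (\Cref{Rem:coh-mackey-tensor}); in particular its structure morphisms $\FP(X)\otimes\FP(Y)\to\FP(X\otimes Y)$ are isomorphisms when $X,Y\in\perm(\Gamma;\CR)$, and $\FP$ carries the unit to the unit. Since the tensor products commute with coproducts on both sides (\Cref{DPerm-tensor} and \Cref{Rem:coh-mackey-tensor}) and $\FP$ preserves coproducts, the full subcategory of objects $X$ for which the structure morphism is invertible for all $Y$ --- respectively, of objects $Y$ for which it is invertible for all $X$ --- is localizing; as it contains $\perm(\Gamma;\CR)$, which $\otimes$-generates, a two-step generation argument shows it is everything. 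Hence $\FP$ is an equivalence of tensor triangulated categories.

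The step I expect to be the main obstacle is the identification $\K(\Proj(\CohMack[\CR]{\Gamma}))/\Kac(\Proj(\CohMack[\CR]{\Gamma}))\simeq\D(\CohMack[\CR]{\Gamma})$: everything else is formal bookkeeping with localizing subcategories and with the Yoneda/Day-convolution formalism, whereas this relies on the genuinely category-specific input that cohomological Mackey functors form a ringoid-module category, so that $K$-projective resolutions of unbounded complexes are available.
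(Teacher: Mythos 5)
Your proof is correct, but the central step takes a genuinely different route from the paper's. Both arguments rely on \Cref{Prop:perm-proj-comack} and on the computation $\Hom_{\D}(\FP(P),X)\cong\Hm_0(X(P))$ which shows that the objects $\FP(P)$, $P\in\perm(\Gamma;\CR)$, are compact generators of $\D(\CohMack[\CR]{\Gamma})$. The paper then directly applies the standard criterion for compactly generated categories: $\FP$ is coproduct-preserving, restricts to an equivalence on compacts (which uses only the elementary fact that $\Kb(\proj(\cA))\into\D(\cA)$ is fully faithful for any abelian $\cA$), and sends compact generators to compact generators, hence is an equivalence. You instead factor through the Verdier quotient $\K(\Proj(\CohMack[\CR]{\Gamma}))/\Kac(\Proj(\CohMack[\CR]{\Gamma}))$, use Neeman--Thomason to identify $\cS=\Loc{P_H}$ with that quotient, and then identify the quotient with $\D(\CohMack[\CR]{\Gamma})$ by appealing to the existence of unbounded K-projective resolutions. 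This is valid: since $\CohMack[\CR]{\Gamma}$ is a module category over the small $\CR$-linear category $\perm(\Gamma;\CR)$, every complex admits a K-projective resolution by a complex of projectives, which gives essential surjectivity of $\K(\Proj)/\Kac(\Proj)\to\D$, and resolving the source variable shows full faithfulness too. The trade-off is that the paper's argument is self-contained and needs only bounded projective resolutions, whereas yours imports the deeper fact about unbounded K-projective resolutions; in exchange your route makes the recollement picture (which the paper defers to \Cref{Rem:DPerm-KPerm-recollement}) visible, and cleanly separates the formal localization theory from the identification of $\Perm(\Gamma;\CR)^\natural$ with $\Proj(\CohMack[\CR]{\Gamma})$. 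Your treatment of the tensor structure is the same as the paper's in spirit.
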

\begin{proof}
The functor~\eqref{eq:FP-perm-comack} is a coproduct-preserving triangulated functor, since it is defined as the composite of three such functors. It restricts to an equivalence
\begin{equation}
\label{eq:FP-perm-comack-compacts}
\Kb(\perm(\Gamma;\CR)^\natural)\isoto\Kb(\proj(\CohMack[\CR]{\Gamma}))
\end{equation}
by \Cref{Prop:perm-proj-comack} and the fact that, as for every abelian category, the functor
\[
\Kb(\proj(\CohMack[\CR]{\Gamma}))\to\D(\CohMack[\CR]{\Gamma})
\]
is fully faithful.
Since the left-hand side of~\eqref{eq:FP-perm-comack-compacts} is the compact part of $\DPerm(\Gamma;\CR)$ by \Cref{Cor:DPerm-compacts}, it suffices to prove that the right-hand side of~\eqref{eq:FP-perm-comack-compacts} is the compact part of $\D(\CohMack[\CR]{\Gamma})$ and generates the latter.
For every $P\in\perm(\Gamma;\CR)$, we have
\begin{align}
\label{eq:hom-proj-D}%
  \Hom_{\D(\CohMack[\CR]{\Gamma})}(\FP(P),X)
  &=\Hom_{\K(\CohMack[\CR]{\Gamma})}(\FP(P),X)\\\notag
  &=\Hm_0(X(P)).
\end{align}
Since homology and evaluation at $P$ commute with coproducts, this shows that the right-hand side of~\eqref{eq:FP-perm-comack-compacts} consists of compact objects in $\D(\CohMack[\CR]{\Gamma})$. Similarly, these $\FP(P)$ generate $\D(\CohMack[\CR]{\Gamma})$ since a complex~$X$ of cohomological Mackey functors that is right-orthogonal to $\FP(P)[i]$ for all~$i\in \bbZ$ must have trivial homology by~\eqref{eq:hom-proj-D}, \ie be zero in the derived category.

It remains to show that the equivalence in the statement is compatible with the tensor structures.
We know that $\FP$ of \Cref{Lem:FP-omnibus} is lax monoidal, hence so is the functor~\eqref{eq:FP-perm-comack}.
Since the equivalence on compacts~\eqref{eq:FP-perm-comack-compacts} is tensor, and since the tensor product commutes with coproducts in each variable, the claim follows.
\end{proof}

\begin{Rem}
\label{Rem:Mackey-algebras}%
In~\cite[\S\,16]{thevenaz-webb:structure-mackey} it was shown that $\CohMack[\CR]{G}$ for $G$ finite is the category of modules over the so-called \emph{cohomological Mackey algebra}. The interested reader can verify that a similar description of $\CohMack[\CR]{\Gamma}$ exists for profinite groups~$\Gamma$, at the cost of using the possibly \emph{non-unital} ring
\[
\cE=\sqcup_{H,K\le \Gamma}\Hom_{\perm(\Gamma;\CR)}(\CR(\Gamma/H),\CR(\Gamma/K)).
\]
The multiplication of this \emph{generalized cohomological Mackey algebra} is given by composition.
Cohomological Mackey functors for~$\Gamma$ are then identified with unital right modules over~$\cE$ (that is, right modules~$M$ such that $M\cdot\cE=M$).

We found it easier to use modules over an additive category, \ie presheaves (as in \Cref{Cor:comack-presh-perm}), rather than modules over non-unital rings.
\end{Rem}
\begin{Rem}
\label{Rem:DPerm-KPerm-recollement}%
Let us return to the localization $\K(\Perm(\Gamma;\CR))\onto\DPerm(\Gamma;\CR)$ of \Cref{Def:DPerm} (\cf \Cref{Rem:DPerm-Bousfield}). First note that $\K(\Perm(\Gamma;\CR))$ is a triangulated category with small coproducts, hence is idempotent-complete. It follows easily that $\K(\Perm(\Gamma;\CR))\cong\K(\Perm(\Gamma;\CR)^\natural)$. So we can identify $\K(\Perm(\Gamma;\CR))$ with the homotopy category of projectives $\K(\Proj(\CohMack[\CR]{\Gamma}))$ by \Cref{Prop:perm-proj-comack}.

Assume now that $\Gamma=G$ is finite and that $\CR$ is coherent. In that case, $\CohMack[\CR]{\Gamma}$ is the category of modules over the cohomological Mackey algebra~$\cE$ of~\cite{thevenaz-webb:structure-mackey}, as in \Cref{Rem:Mackey-algebras}. And the ring~$\cE$ remains coherent. We can then invoke \cite{neeman:K-flat} to conclude that the homotopy category of projectives $\cE$-modules is compactly generated. Hence so is $\K(\Perm(G;\CR))$ in that case.
Applying the general results of \Cref{Rec:Neeman-loc}\,\eqref{it:Neeman-recollement} to the triangulated category $\cT=\K(\Perm(G;\CR))$ and to the set of compact objects~$\cG=\SET{\CR(G/H)}{H\le G}$ we obtain the following left-hand recollement of triangulated categories
\[
\xymatrix@R=2em{
\DPerm(G;\CR)
\ar@{ >->}@/_2pc/[d]_{\incl}
\ar@{ >->}@/^2pc/[d]^{}
\\
\K(\Perm(G;\CR))
\ar@{->>}@/^2pc/[d]^{}
\ar@{->>}@/_2pc/[d]_{}
\ar@{->>}[u]^{Q}
\\
\K_{\Gac}(\Perm(G;\CR))
\ar@{ >->}[u]^{\incl}
}
\qquad\qquad
\xymatrix@R=2em{
\D(\CohMack[\CR]{G})
\ar@{ >->}@/_2pc/[d]_{\incl}
\ar@{ >->}@/^2pc/[d]^{}
\\
\K(\Proj(\CohMack[\CR]{G}))
\ar@{->>}@/^2pc/[d]^{}
\ar@{->>}@/_2pc/[d]_{}
\ar@{->>}[u]^{Q}
\\
\K_{\ac}(\Proj(\CohMack[\CR]{G}))
\ar@{ >->}[u]^{\incl}
}
\]
Under the equivalences of \Cref{Prop:perm-proj-comack} and \Cref{Cor:DPerm-comack}, this recollement can be translated into the recollement depicted on the right-hand side above.

At the moment, we do not know if this can be extended to profinite groups. The main issue is whether $\K(\Proj(\CohMack[\CR]{\Gamma}))$ remains compactly generated.
\end{Rem}

\section{Sheaves with transfers}
\label{sec:Shtr}

In Voevodsky's approach to motives of algebraic varieties over a field, a central concept is the one of \emph{transfers}.
His observation was that for a well-behaved theory it is not enough to consider morphisms of varieties but one has to allow for (finitely) multi-valued maps.
Equivalently, one has to allow certain `wrong-way' morphisms, that is, transfers.
In this section we recall this notion and some basic facts about sheaves with transfers.

The mention of `wrong-way' morphisms and multi-valued maps should ring a bell.
As we saw in \Cref{sec:discrete-perm}, adding `wrong-way' morphisms to $\Gamma$-sets results in the span category and this is only one step away from permutation modules.
The other goal of this section then is to explain the connections between transfers, permutation modules and cohomological Mackey functors.
The basic dictionary between the two sides is provided by Galois theory, and looks as follows (with notation to be introduced in the present section), cf.~\cite{kahn-Yamazaki:DM-Weil-correction}.

\medskip
\begin{center}
\begin{tabular}[c]{ccc}
  \textbf{algebraic geometry}&\hphantom{phantom}&\textbf{representation theory}\\
  \hline{}\\
  $\Sm^0_\FF$&&$\gasets$\\
  $\Span(\Sm^0_\FF)$&&$\Span(\gasets)$\\
  $\Omega(\FF)$&&$\Omega(\Gamma)$\\
  $\Corr_\FF\otimes\CR$&&$\perm(\Gamma;\CR)$\\
  $\Sh{\Nis}{\Corr_\FF}{\CR}$&&$\CohMack[\CR]{\Gamma}$
\end{tabular}
\end{center}
\medskip

\begin{Hyp}
\label{Hyp:field-galois-group}%
Let $\FF$ be a field with a choice of separable algebraic closure $\FFsep$ and absolute Galois group~$\Gamma=\Gamma_{\FF}:=\Gal(\FFsep/\FF)$.
All schemes are assumed to be separated and of finite type over their base field (which is often $\FF$, or a finite extension thereof).
\end{Hyp}

\begin{Rec}
  \label{correspondences}%
    Let $X$ and $Y$ be smooth $\FF$-schemes.
    The free abelian group on integral subschemes of $X\times Y$, finite and surjective over a connected component of~$X$, is denoted $\corr(X,Y)$, or $\corr_{\FF}(X,Y)$ to be more precise.
    These are the \emph{finite correspondences} from $X$ to~$Y$.
    The category whose objects are smooth $\FF$-schemes and whose morphisms are finite correspondences is denoted~$\Corr_{\FF}$.
    The composition is defined in terms of push-forward and intersection of cycles~\cite[\S\,9.1]{cisinski-deglise:DM}.
    There is a faithful functor $\Sm_{\FF}\to\Corr_{\FF}$ from smooth $\FF$-schemes to the category of $\FF$-correspondences, which takes a morphism to its graph (and is the identity on objects).
    Note that the cartesian product endows $\Corr_{\FF}$ with the structure of a tensor category (finite biproducts are given by the disjoint union of schemes).
\end{Rec}

\begin{Exa}
  \label{0-corr}%
  Let $X$ be a zero-dimensional smooth $\FF$-scheme, \ie a scheme \'etale over $\Spec(\FF)$.
  Then $X=\coprod_{i=1}^n\Spec(\KK_i)$ is a finite disjoint union of spectra of finite separable extensions of $\FF$.
  A finite correspondence from $X$ to $Y$ is a finite linear combination of closed points of $X\times Y=\coprod_iY_{\KK_i}$, that is, $\corr(X,Y)$ is the group of zero cycles in $X\times Y$, denoted $Z_0(X\times Y)$.
  Moreover, the finite correspondences in $\corr(Y,X)$ are finite linear combinations of connected components of $Y\times X$, that is, the free abelian group on~$\pi_0(Y\times X)$.

  We denote the full subcategory of $\Corr_\FF$ spanned by the zero-dimensional smooth $\FF$-schemes by~$\Corr^0_{\FF}$.
  Of course, it is a tensor subcategory of~$\Corr_{\FF}$.
\end{Exa}

\begin{Rec}
\label{Rec:fet-g-sets}%
Let $X$ be a scheme over $\FF$ and consider the set of $\FFsep$-points:
\[
X(\FFsep):=\Hom_{\FFsep}(\Spec(\FFsep),X_{\FFsep}).
\]
The action of $\Gamma$ on $X_{\FFsep}$ endows $X(\FFsep)$ with the structure of a $\Gamma$-set, and elaborating a bit one finds a functor
\[
\Schm_{\FF}\to\Gasets.
\]
Galois theory (for example in the form of~\cite[Expos\'{e}~V]{sga1}) tells us that this functor restricts to an equivalence
\[
\Sm_{\FF}^0\isoto\gasets
\]
between finite \'etale $\FF$-schemes and finite $\Gamma$-sets.
The inverse of this equivalence sends a transitive $\Gamma$-set $\Gamma/H$ to $\Spec(\FFsep^H)$.
\end{Rec}

\begin{Rem}
\label{Rem:corr-spans}%
From the equivalence of \Cref{Rec:fet-g-sets} we deduce an equivalence of span categories:
\[
\Span(\Sm_{\FF}^0)\isoto\Span(\gasets).
\]
Let us denote by $\Omega(\FF)$ the additive category obtained from $\Span(\Sm_\FF^0)$ by group completing the monoids of homomorphisms.
Thus a further equivalence (\cf \Cref{Def:profinite-Mackey-functor}):
\[
\Omega(\FF)\isoto\Omega(\Gamma).
\]
The objects of $\Omega(\FF)$ and $\Corr^0_\FF$ coincide, and are in both cases the zero-dimensional smooth $\FF$-schemes.
Morphisms between two zero-dimensional smooth $\FF$-schemes $X$ and $Y$ in the two categories are also closely related.
Indeed, in $\Omega(\FF)$ the group of morphisms $X\to Y$ is generated by spans $f:Z\to X\times Y$ with $Z$ another smooth zero-dimensional $\FF$-scheme.
In $\Corr^0_{\FF}$ the group of morphisms $X\to Y$ is generated by spans $f:Z\to X\times Y$ as before, where $f$ is a closed immersion.
(And composition of morphisms coincides.)
Reminding ourselves of \Cref{Rem:span-factorization}, we should view the passage from $\Omega(\FF)$ to $\Corr^0_\FF$ as being analogous to the passage from $\Omega_\CR(\Gamma)$ to $\perm(\Gamma;\CR)$.
And this is indeed precisely right, as one can easily show.
We will deduce it later (\Cref{Corr-perm-equivalence}) from more general considerations but, as an illustration of the notions just introduced, we sketch the main idea here.

Let $f:Z\to X\times Y$ be a span describing a morphism $X\tobar Y$ in $\Omega(\FF)$ and assume that $Z$ is connected.
We may push-forward cycles along the finite map $f$,
\[
f_*[Z]\in Z_0(X\times Y)=\corr(X,Y),
\]
as seen in \Cref{0-corr}.
Recall that this is very concrete: $f$ factors through one of the points $P$ of $X\times Y$, and is described simply by a finite field extension $\FF(P)\subseteq \FF(Z)$.
The $0$-cycle $f_*[Z]$ is then nothing but $[\FF(Z):\FF(P)]\cdot P$.

In particular, let $\FF\subseteq\KK\subseteq\LL$ be a pair of finite separable field extensions and consider the span
\begin{equation}
\label{eq:special-etale-span}%
\vcenter{\xymatrix@R=.5em{
  &
  \Spec(\LL)
  \ar[ld]_{\pi}
  \ar[rd]^{\pi}
  \\
  \Spec(\KK)
  &&
  \Spec(\KK)
}}
\end{equation}
corresponding to the composite morphism $\KK\otimes_\FF\KK\into\LL\otimes_\FF\LL\xonto{m} \LL$ with $m$ being the multiplication in $\LL$.
This also factors as $\KK\otimes_\FF\KK\xonto{m}\KK\into\LL$ from which we deduce that the closed point $P$ in this case is $\Spec(\KK)$ and its $0$-cycle in $\Spec(\KK)\times\Spec(\KK)$ is the graph of the identity morphism.
It follows that the image of~\eqref{eq:special-etale-span} in correspondences is $[\LL:\KK]\cdot\Id_{\Spec(\KK)}$. So $\Omega(\FF)\to\Corr_\FF$ factors through the analogous quotient as in \Cref{Omega-perm-quotient}.
It is then not difficult to see that the induced functor on the quotient is an equivalence.
\end{Rem}

\begin{Rec}[{\cite[\S\,10]{cisinski-deglise:DM}}]
\label{sheaves-with-transfers}%
A \emph{presheaf with transfers} is an additive presheaf on $\Corr_{\FF}$ with values in $\CR$-modules.
Such a presheaf is called a \emph{sheaf with transfers} if its restriction to $\Sm_{\FF}$ is a sheaf. Here we are interested in the Nisnevich and the \'etale topology.
Recall that covering families in the latter are finite families $(X_i\to X)$ of \'etale morphisms that are jointly surjective.
For the Nisnevich topology one requires in addition that for every point $x\in X$ there exists $i$ and $y\in X_i$ mapping to $x$ and inducing an isomorphism $\kappa(x)\isoto\kappa(y)$ on residue fields.

This gives rise to Grothendieck abelian categories $\Sh{\Nis}{\Corr_{\FF}}{\CR}$ and $\Sh{\et}{\Corr_{\FF}}{\CR}$.
For every smooth $\FF$-scheme $X$, the associated presheaf with transfers $\corr_\FF(-,X)\otimes\CR$ is a sheaf for both topologies, and these objects form a dense generating family for the categories of sheaves with transfers.
We employ similar terminology for presheaves on the full subcategory~$\Corr^0_{\FF}$, yielding Grothendieck abelian categories $\Sh{\Nis}{\Corr^0_{\FF}}{\CR}$ and $\Sh{\et}{\Corr^0_\FF}{\CR}$.
\end{Rec}

\begin{Rec}[{\cite[\S\,10.3]{cisinski-deglise:DM}}]
\label{Rec:sheafification-transfers}%
In the sequel we will write $\tau$ for any of the two topologies $\Nis$ or $\et$.
We will write invariably $\otr$ for the functor on (pre)sheaves which `forgets transfers', \ie is induced by restriction along $\Sm_\FF^{(0)}\into\Corr_\FF^{(0)}$.
The functor $\otr$ is faithful and exact (in fact, it commutes with all limits and colimits) hence it is conservative.

The canonical inclusion $\Sh{\tau}{\Corr^{(0)}_\FF}{\CR}\into\PSh{\oplus}{\Corr^{(0)}_\FF;\CR}$ into the category of additive presheaves admits a left adjoint $\atau$ such that the canonical natural transformation is an equivalence $\atau\otr\isoto\otr\atau$.
In other words, the sheafification of the underlying presheaf without transfers admits a canonical structure of presheaf with transfers.
In particular, the sheafification functor at the level of (pre)sheaves with transfers is exact as well.

Finally, sheafification and Day convolution endow the categories of sheaves with transfers with a closed tensor structure extending the one on~$\Corr_{\FF}^{(0)}$.
\end{Rec}

We will use the following technical result.
\begin{Lem}
  \label{sheaves-with-transfers-properties}%
  The inclusion $\iota:\Corr^0_{\FF}\into\Corr_{\FF}$ and \'etale sheafification induce a commutative square of left adjoint exact tensor functors
  \[
    \xymatrix{
      \Sh{\Nis}{\Corr^0_{\FF}}{\CR}
      \ar@{ >->}[r]^{\iota_!}
      \ar[d]^{\aet}
      &
      \ \Sh{\Nis}{\Corr_{\FF}}{\CR}
      \ar[d]^{\aet}
      \\
      \Sh{\et}{\Corr^0_{\FF}}{\CR}
      \ar@{ >->}[r]^{\iota_!}
      &
      \ \Sh{\et}{\Corr_{\FF}}{\CR}
    }
  \]
  where both horizontal arrows $\iota_!$ are fully faithful.
\end{Lem}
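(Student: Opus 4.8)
The plan is to build the square one piece at a time, treating the four functors as obtained by restricting/extending presheaves along the two evident functors $\iota\colon \Corr^0_\FF\into\Corr_\FF$ (horizontally) and along the identity on objects when passing from the Nisnevich to the \'etale topology (vertically). First I would note that at the level of additive \emph{presheaves} with transfers, restriction along $\iota$ has a left adjoint $\iota_!$ given by left Kan extension, and that this $\iota_!$ is fully faithful precisely because $\iota$ is fully faithful (\Cref{0-corr} identifies $\Corr^0_\FF$ with a full tensor subcategory of $\Corr_\FF$). To pass from presheaves to sheaves one composes $\iota_!$ with the relevant sheafification $\atau$; the key input for exactness and monoidality is \Cref{Rec:sheafification-transfers}, which tells us sheafification with transfers is exact and that Day convolution descends. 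So each of the four arrows in the square is a composite of a (left-exact-and-then-some) Kan extension with an exact sheafification, hence left adjoint, exact, and monoidal; monoidality of $\iota_!$ at presheaf level is the standard fact that left Kan extension along a strong monoidal functor between the source categories is strong monoidal for Day convolution, and it is inherited after sheafification by the universal property in \Cref{Rec:sheafification-transfers}.

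**Commutativity.** Next I would establish that the square commutes up to canonical isomorphism. The cleanest route is to pass to right adjoints: the right adjoint of each $\iota_!$ is restriction $\iota^*$ along $\iota$, and the right adjoint of each vertical $\atau$ is the fully faithful inclusion of sheaves into presheaves, which simply forgets that one is a sheaf. The square of right adjoints — restriction along $\iota$ versus the sheaf-inclusions — visibly commutes \emph{strictly}, because both composites send a Nisnevich sheaf with transfers on $\Corr_\FF$ to its underlying presheaf restricted to $\Corr^0_\FF$. By uniqueness of adjoints, the original square of left adjoints commutes up to a canonical natural isomorphism. One should check this isomorphism is monoidal, but that again follows from the universal property: both composites $\aet\circ\iota_! $ and $\iota_!\circ\aet$ are the (essentially unique) exact monoidal left adjoint extending the composite $\Corr^0_\FF\to \Sh{\et}{\Corr_\FF}{\CR}$ along the respective Yoneda-type embeddings, using \Cref{Rec:sheafification-transfers} for the \'etale side.

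**Full faithfulness of the bottom $\iota_!$.** The genuinely substantive point — and the one I expect to be the main obstacle — is that the bottom horizontal $\iota_!\colon \Sh{\et}{\Corr^0_\FF}{\CR}\into\Sh{\et}{\Corr_\FF}{\CR}$ is fully faithful, i.e. that the \emph{unit} $F\to \iota^*\iota_! F$ is an isomorphism for every \'etale sheaf $F$ on $\Corr^0_\FF$ (the top case is analogous with Nisnevich in place of \'etale). Full faithfulness at the presheaf level is formal, but sheafifying can destroy it unless $\iota$ interacts well with the topologies. The point to verify is that $\iota$ is a \emph{continuous and cocontinuous} embedding of sites in a strong enough sense: a covering of a zero-dimensional $X$ in $\Sm_\FF$ (equivalently, by \Cref{Rec:fet-g-sets}, a surjection of finite $\Gamma$-sets) is already "cofinal" among coverings of $X$ computed in $\Sm_\FF$, so that for $X\in\Corr^0_\FF$ the \'etale topos of $\Corr_\FF$ restricted to the slice over $X$ agrees with that of $\Corr^0_\FF$. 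Concretely: an \'etale cover of a finite \'etale $\FF$-scheme admits a refinement that is again finite \'etale over $\FF$ (pass to connected components and take a finite subcover, then a Galois closure), and likewise for Nisnevich since a Nisnevich cover of a zero-dimensional scheme splits. This cofinality gives that $\iota^*$ commutes with sheafification, i.e. $\iota^*\atau\simeq\atau\iota^*$ on presheaves on $\Corr_\FF$; combined with $\iota^*\iota_!\simeq\id$ on presheaves (formal) and the fact that $\iota_!$ at sheaf level is $\atau\circ(\text{presheaf }\iota_!)\circ(\text{inclusion})$, one computes $\iota^*\iota_!^{\mathrm{sh}}F\simeq \iota^*\atau\,\iota_!^{\mathrm{psh}}F\simeq \atau\,\iota^*\iota_!^{\mathrm{psh}}F\simeq \atau F\simeq F$ for $F$ a sheaf, which is the desired full faithfulness. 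I would isolate the cofinality-of-covers statement as the one lemma requiring an actual (short) argument, and reduce everything else to \Cref{Rec:sheafification-transfers}, \Cref{Rec:fet-g-sets}, and formal nonsense about Kan extensions and Day convolution.
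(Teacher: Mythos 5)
Your overall strategy mirrors the paper's: set up $\iota_!\dashv\iota^*$ at the presheaf level, pass to sheaves via $\atau$, get commutativity from the square of right adjoints, and handle tensor-compatibility via Day convolution and \Cref{Rec:sheafification-transfers}. Your treatment of full faithfulness via cofinality of covers is essentially the continuity/cocontinuity of the site morphism $\Sm_\FF^0\into\Sm_\FF$ that the paper invokes from SGA4, so that part is in the right spirit — though you should make explicit that the sheaf condition in $\Sh{\tau}{\Corr_\FF^{(0)}}{\CR}$ is tested on the \emph{underlying} presheaves without transfers, so the reduction really happens through $\otr$. The paper makes this reduction precise by establishing the comparison $\iota_!\,\otr\isoto\otr\,\iota_!$ on representables using \cite[Corollary~2.1.9]{cisinski-deglise:etale-motives}; your $\iota^*\atau\simeq\atau\iota^*$ claim would need an analogous step to be airtight.

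The genuine gap is \emph{exactness of $\iota_!$}. You dispose of it by calling the Kan extension ``(left-exact-and-then-some),'' but left Kan extension along a fully faithful additive functor is a left adjoint — hence right exact — and is \emph{not} left exact in general, and sheafifying does not repair this. Exactness here is a real theorem with a real input: after reducing to sheaves without transfers via $\otr$ (which is faithful exact), the paper observes that $\iota\colon\Sm_\FF^0\into\Sm_\FF$ has a \emph{left} adjoint, sending a connected smooth $\FF$-scheme $X$ to the spectrum of the separable closure of $\FF$ in $\Hm^0(X,\mathcal{O}_X)$, and then invokes \cite[III, Proposition~2.5]{sga4.1} to conclude that $\iota_!$ on sheaves is exact. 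Nothing in your argument plays the role of this left adjoint or of that SGA4 result; as written, your proof of exactness of $\iota_!$ does not go through. You should isolate the claim ``$\iota_!$ is exact on sheaves without transfers'' as the missing lemma, supply the left adjoint of $\iota$, and cite the relevant topos-theoretic fact (or prove it), then lift exactness through $\otr$ as the paper does.
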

\begin{proof}
  The inclusion $\iota:\Corr_\FF^0\into\Corr_\FF$ induces an adjunction $\hat{\iota}_!\dashv\hat{\iota}^*=(-)\circ\iota$ at the level of presheaves with values in $\CR$-modules, which restricts to an adjunction (denoted by the same symbols) at the level of additive presheaves.
  It is clear that the restriction $\hat{\iota}^*=(-)\circ\iota$ preserves $\tau$-sheaves with transfers.
  We denote the induced functor by $\iota^*:\Sh{\tau}{\Corr_\FF}{\CR}\to\Sh{\tau}{\Corr_\FF^0}{\CR}$.
  It admits a left adjoint $\iota_!=\atau\hat{\iota}_!$,
  and consequently the square of left adjoints in the statement of the lemma commutes, as claimed.
  It remains to prove that the horizontal arrows are fully faithful and exact tensor functors.

  Since the tensor structure on sheaves with transfers is obtained from Day convolution and sheafification from the tensor structure on $\Corr_\FF^{(0)}$, and since the inclusion $\iota:\Corr_\FF^0\into\Corr_\FF$ is tensor, it follows immediately that $\iota_!$ is tensor as well.

  For full-faithfulness and exactness we start with the following observation.
  There is a canonical isomorphism $\otr\iota^*\cong\iota^*\otr$ where on the right hand side, $\iota^*$ denotes the analogous restriction functor on sheaves without transfer.
  We claim that the induced comparison morphism $\iota_!\otr\to\otr\iota_!:\Sh{\tau}{\Corr^0_\FF}{\CR}\to\Sh{\tau}{\Sm_\FF}{\CR}$ is an isomorphism too.
  Since all functors preserve colimits, it suffices to show that the comparison morphism is invertible when evaluated on `representable' sheaves with transfers $\corr^0_\FF(-,X)\otimes\CR$, where $X$ is an \'etale $\FF$-scheme.
  This follows from~\cite[Corollary~2.1.9]{cisinski-deglise:etale-motives} (in fact, both sides are equal to the $\CR$-linear $\tau$-sheaf on $\Sm_\FF$ represented by $X$).
  We conclude that exactness of $\iota_!$ at the level of sheaves with transfers would follow from the same property of $\iota_!$ at the level of sheaves without transfers (since $\otr$ is faithful exact).
  And since fully faithfulness of $\iota_!$ is equivalent to the unit $\Id\to\iota^*\iota_!$ being an equivalence, this would also follow from the same property of $\iota_!$ at the level of sheaves without transfers.

  The inclusion $\iota:\Sm_{\FF}^0\into\Sm_{\FF}$ is a continuous and cocontinuous functor for both topologies~\cite[III, Corollary~3.4]{sga4.1} and it follows from general topos theory~\cite[III, Proposition~2.6]{sga4.1} that
  \begin{equation}
    \iota_!:\Sh{\tau}{\Sm^0_{\FF}}{\CR}\to\Sh{\tau}{\Sm_{\FF}}{\CR}\label{eq:iota-wo-transfers}
  \end{equation}
  is fully faithful.
  The inclusion $\iota:\Sm_{\FF}^0\into\Sm_{\FF}$ also admits a left adjoint which sends a connected smooth $\FF$-scheme $X$ to the spectrum of the separable closure of $\FF$ in $\Hm^0(X,\mathcal{O}_X)$.
  It follows from~\cite[III, Proposition~2.5]{sga4.1} that~(\ref{eq:iota-wo-transfers}) is exact, and this concludes the proof.
\end{proof}

\begin{Not}
  \label{fiber-functor}%
  Let $M\in\Sh{\et}{\Corr_{\FF}^0}{\CR}$ be an \'etale sheaf with transfers.
  We define the $\CR$-module $M(\FFsep)$ as the following colimit in $\CR$-modules
  \[
    M(\FFsep):=\colim_{\KK}M(\Spec(\KK)),
  \]
  where $\KK$ runs over the finite field extensions of $\FF$ contained in $\FFsep$.
  This $\CR$-module $M(\FFsep)$ comes with a canonical action of~$\Gamma=\Gal(\FFsep/\FF)$, inducing a functor
  \begin{equation}
    \label{eq:fiber-functor}%
    \Psi_{\FFsep}:\Sh{\et}{\Corr_{\FF}^0}{\CR}\to\MMod{\Gamma;\CR}.
  \end{equation}
\end{Not}

\begin{Lem}
  \label{Galois-equivalence}%
  The functor $\Psi_{\FFsep}$ of~(\ref{eq:fiber-functor}) is an equivalence of $\CR$-linear tensor categories.
\end{Lem}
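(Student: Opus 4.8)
The plan is to construct an explicit quasi-inverse and check compatibility with the tensor structures, rather than to verify the abstract axioms of an equivalence directly. First I would observe that the source category $\Sh{\et}{\Corr^0_\FF}{\CR}$ is, by \Cref{Rec:sheafification-transfers} and the discussion of \Cref{Rem:corr-spans}, the category of $\CR$-linear additive presheaves on $\Corr^0_\FF\otimes\CR$ that are \'etale sheaves when restricted to $\Sm^0_\FF$; and $\Sm^0_\FF\simeq\gasets$ by Galois theory (\Cref{Rec:fet-g-sets}). The key point is that for the \'etale topology on $\Sm^0_\FF\simeq\gasets$, an additive presheaf is a sheaf if and only if it sends finite coproducts to products and satisfies descent along the covers $\Spec(\KK)\to\Spec(\FF)$ for finite separable $\KK/\FF$, \ie along the `free transitive' covers $\Gamma\to\Gamma/H$ in $\gasets$. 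Descent along these covers is precisely a fixed-point/Galois-descent condition. So the first step is to identify $\Sh{\et}{\Corr^0_\FF}{\CR}$ with a category of `\'etale sheaves on $\Omega(\FF)$' (or equivalently, on $\Corr^0_\FF\otimes\CR\simeq\perm(\Gamma;\CR)$ once the equivalence sketched in \Cref{Rem:corr-spans} is in place — though I would try to keep this self-contained and not invoke \Cref{Corr-perm-equivalence}, which comes later).

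Next I would write down the candidate quasi-inverse. Given a discrete $(\Gamma;\CR)$-module $V$, define a presheaf on $\Corr^0_\FF$ by $\Spec(\KK)\mapsto V^{\Gal(\FFsep/\KK)} = \Hom_\Gamma(\CR(\Gamma/H),V)$ where $\Gamma/H$ corresponds to $\Spec(\KK)$ under Galois theory, with restrictions and transfers dictated by the $\perm(\Gamma;\CR)$-module structure coming from \Cref{Cor:comack-presh-perm} applied to $\FP(V)$ (the fixed-point presheaf of \Cref{Cons:FP}). One checks this is an \'etale sheaf with transfers: sheaf descent along $\Spec(\KK)\to\Spec(\FF)$ translates, via $V^K \to V \rightrightarrows V\otimes_{\CR}\CR(\Gamma/K)$-type diagrams, into the statement $V^H = (V)^{\Gal(\FFsep/\KK)}$, which holds because $V$ is discrete (every element has open stabilizer, so $V=\colim_K V^K$ and each $V^H$ is computed correctly). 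Conversely, on $M\in\Sh{\et}{\Corr^0_\FF}{\CR}$ the functor $\Psi_{\FFsep}$ of \eqref{eq:fiber-functor} takes the colimit $\colim_\KK M(\Spec(\KK))$, and the two constructions are mutually inverse essentially because of the two reciprocal facts: (i) a discrete module is recovered from its fixed points as $V=\colim_K V^K$; and (ii) an \'etale sheaf on $\Sm^0_\FF$ is determined by its stalk at the geometric point $\Spec(\FFsep)$ together with the $\Gamma$-action, which is exactly the content of the equivalence between the small \'etale topos of $\Spec(\FF)$ and $\Gamma$-sets — the linear/additive and transfer-enriched version of \Cref{Rec:fet-g-sets}.

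Then I would check that $\Psi_{\FFsep}$ is $\CR$-linear (clear, as it is a filtered colimit of $\CR$-linear evaluation functors) and monoidal: the tensor structure on the target is $\otimes_\CR$ with diagonal action (\Cref{Rec:discrete-modules}), and on the source it is the Day convolution from the Cartesian product on $\Corr^0_\FF$ followed by \'etale sheafification (\Cref{Rec:sheafification-transfers}); since $\Psi_{\FFsep}$ sends the representable sheaf on $\Spec(\KK)$ to $\CR(\Gamma/H)$ and these are compatible with products/tensor by the Mackey formula \eqref{eq:mackey-formula-tensor-special}, and since both sides' tensor products commute with colimits, a generators-and-colimits argument gives the monoidal comparison. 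The main obstacle I expect is bookkeeping around the transfers: one must verify that the sheaf condition on $\Corr^0_\FF$ really is equivalent to discreteness on the $\Gamma$-module side \emph{including} that the transfer (covariant) structure on $M$ is forced by the restriction structure — this is where one genuinely uses that we are on zero-dimensional schemes (so transfers are `trace along finite \'etale covers' and are determined, after inverting nothing, by the fixed-point data), paralleling how \Cref{Omega-perm-quotient} shows $\perm(\Gamma;\CR)$ is a quotient of $\Omega_\CR(\Gamma)$. Once that equivalence of structures is pinned down, fully faithfulness and essential surjectivity of $\Psi_{\FFsep}$ follow formally from (i) and (ii) above.
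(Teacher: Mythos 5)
Your approach is genuinely different from the paper's, and it is worth comparing.

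\textbf{What the paper does.} The paper factors $\Psi_{\FFsep}$ as
\[
\Sh{\et}{\Corr^0_\FF}{\CR}\xto{\ \otr\ }\Sh{\et}{\Sm^0_\FF}{\CR}\isoto\MMod{\Gamma;\CR},
\]
with the second equivalence being classical linear Galois theory (\cite[VIII, Cor.~2.2]{sga4.2}), so the whole content is reduced to showing that the forget-transfers functor $\otr$ is an equivalence of $\CR$-linear tensor categories. For that, the paper checks that the unit of the adjunction $\atr\dashv\otr$ is invertible on representables by citing \cite[Corollary~2.1.9]{cisinski-deglise:etale-motives} — the statement that \'etale sheafification without transfers computes the same thing as the representable sheaf with transfers — and then a short colimit argument finishes. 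You instead attempt to build an explicit quasi-inverse $V\mapsto(\Spec(\KK)\mapsto V^{\Gal(\FFsep/\KK)})$ and verify everything by hand. That is more concrete, but it is also substantially longer; the paper's factorization buys the entire ``untransferred'' side of the equivalence for free.

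\textbf{The gap.} You correctly identify the crux — that the transfer (covariant) structure on an \'etale sheaf $M\in\Sh{\et}{\Corr^0_\FF}{\CR}$ must be \emph{forced} by the restriction structure, so that both full faithfulness and essential surjectivity of $\Psi_{\FFsep}$ reduce to the non-transfer Galois equivalence. But you then leave this claim unproved, and the analogy you propose — ``paralleling how \Cref{Omega-perm-quotient} shows $\perm(\Gamma;\CR)$ is a quotient of $\Omega_\CR(\Gamma)$'' — does not bear on it. \Cref{Omega-perm-quotient} is a purely group-theoretic computation of Hom-sets; it says nothing about when a covariant (transfer) structure on a \emph{sheaf} is determined by its contravariant structure. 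The latter is an \'etale rigidity phenomenon specific to the \'etale topology (it fails Nisnevich-locally, for instance), and it is precisely what the paper's citation of Cisinski--D\'eglise supplies. A direct replacement would have to argue, say, that for a Galois cover $\Spec(\KK')\to\Spec(\KK)$ with group $G$, the composite $\pi^*\pi_*$ in $\Corr^0_\FF$ equals $\sum_{g\in G}g^*$, and that $\pi^*$ is injective by \'etale descent, so $\pi_*$ is determined; then extend to arbitrary finite separable covers. None of this is in your sketch, and until it is, your ``(ii)'' — that an \'etale sheaf on $\Sm^0_\FF$ is determined by its stalk ``in the transfer-enriched version'' — is assuming the lemma in order to prove it.

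\textbf{A secondary concern.} The construction of the transfers on your candidate quasi-inverse is phrased via ``the $\perm(\Gamma;\CR)$-module structure coming from \Cref{Cor:comack-presh-perm} applied to $\FP(V)$'' together with the identification $\Corr^0_\FF\otimes\CR\simeq\perm(\Gamma;\CR)$. The latter is \Cref{Corr-perm-equivalence}, which the paper proves \emph{after} — and using — the present lemma. You flag this and say you would keep things self-contained; that is possible (\Cref{Rem:corr-spans} sketches the required identification directly), but it needs to actually be done to avoid circularity, and it would make the proof noticeably longer than the paper's.
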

\begin{proof}
  It is clear that $M(\FFsep)$ only depends on the sheaf without transfers underlying~$M$, that is, we have a factorization
  \[
    \Psi_{\FFsep}:\Sh{\et}{\Corr_{\FF}^0}{\CR}\xto{\otr}\Sh{\et}{\Sm_{\FF}^0}{\CR}\isoto\MMod{\Gamma;\CR}.
  \]
  The second functor is the well-known $\CR$-linear tensor equivalence induced by Galois theory~\cite[VIII, Corollaire~2.2]{sga4.2}, and it therefore suffices to show that $\otr$ is an equivalence of $\CR$-linear tensor categories too.
  It admits a left adjoint $\atr$ for formal reasons, and we want to show that the unit $\eta$ of this adjunction is invertible.
  Since both functors preserve colimits, it suffices to show that $\eta$ is invertible when evaluated on representable sheaves.
  This follows from~\cite[Corollary~2.1.9]{cisinski-deglise:etale-motives}.

  Now, $\atr$ is colimit preserving and fully faithful, and its essential image contains the generating family of $\Sh{\et}{\Corr^0_\FF}{\CR}$ given by the `representable' sheaves with transfers.
  This shows that $\atr$ is an equivalence, with quasi-inverse $\otr$.
  Finally, we note that $\atr$ is $\CR$-linear and tensor, and this completes the proof.
\end{proof}

\begin{Prop}
  \label{Corr-perm-equivalence}%
  Consider the exact tensor functor $\Psi_{\FFsep}\circ\aet:\Sh{\Nis}{\Corr^0_{\FF}}{\CR}\to\MMod{\Gamma;\CR}$.
  It restricts to an equivalence of $\CR$-linear tensor categories
  \[
    \Corr^0_{\FF}\otimes\,\CR\isoto \perm(\Gamma;\CR).
  \]
\end{Prop}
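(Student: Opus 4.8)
The plan is to reduce the statement to the equivalence $\Psi_{\FFsep}$ of \Cref{Galois-equivalence}, thereby bypassing any double-coset bookkeeping. Write $F:=\Psi_{\FFsep}\circ\aet$; by the statement $F$ is an exact tensor functor, and it is clearly $\CR$-linear. Let $h\colon\Corr^0_\FF\otimes\CR\hookrightarrow\PSh{\oplus}{\Corr^0_\FF;\CR}$ be the $\CR$-linear Yoneda embedding, so $hX=\corr^0_\FF(-,X)\otimes\CR$; it is fully faithful and monoidal for Day convolution, and it factors through $\Sh{\Nis}{\Corr^0_\FF}{\CR}$ because representable presheaves with transfers are sheaves (\Cref{sheaves-with-transfers}). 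Via $h$ we regard $\Corr^0_\FF\otimes\CR$ as a full subcategory of $\Sh{\Nis}{\Corr^0_\FF}{\CR}$, so the restriction in the statement is $F\circ h$, and it suffices to show that $F\circ h$ is fully faithful with essential image the full subcategory $\perm(\Gamma;\CR)\subseteq\MMod{\Gamma;\CR}$.

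For full faithfulness, I would first observe that when $X$ is zero-dimensional (hence finite \'etale over $\FF$) the presheaf $hX$ is already an \emph{\'etale} sheaf with transfers, so $\aet$ restricts to the identity on the full subcategory spanned by such objects; thus $F\circ h$ agrees with $\Psi_{\FFsep}$ restricted to the full subcategory of representable \'etale sheaves with transfers. Since $\Psi_{\FFsep}$ is an equivalence it is fully faithful, so $F\circ h$ is too; unwinding via Yoneda, this says precisely that
\[
\corr^0_\FF(X,Y)\otimes\CR\;\cong\;\Hom_{\Sh{\et}{\Corr^0_\FF}{\CR}}(hX,hY)\;\xto{\ F\ }\;\Hom_{\MMod{\Gamma;\CR}}(FhX,FhY)
\]
is bijective, with no computation of homomorphism groups on either side.

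For the essential image I would compute $F$ on objects. Directly from \Cref{fiber-functor}, $F(hX)=\Psi_{\FFsep}(hX)\cong\colim_{\KK}\bigl(Z_0(\Spec\KK\times_\FF X)\otimes\CR\bigr)$, and since $X_{\FFsep}$ is a finite disjoint union of copies of $\Spec\FFsep$ indexed by the finite $\Gamma$-set $X(\FFsep)$, this colimit of zero-cycle groups is the permutation module $\CR(X(\FFsep))$ with its natural $\Gamma$-action. (Equivalently one uses the factorization $\Psi_{\FFsep}=\Phi\circ\otr$ from the proof of \Cref{Galois-equivalence}, with $\Phi$ the Galois equivalence, together with the fact that $\otr$ carries $hX$ to the representable \'etale sheaf at $X$.) By Galois theory (\Cref{Rec:fet-g-sets}) the $\Gamma$-sets $X(\FFsep)$, as $X$ ranges over zero-dimensional smooth $\FF$-schemes, exhaust the finite $\Gamma$-sets, so the essential image of $F\circ h$ is exactly $\perm(\Gamma;\CR)$ (\Cref{Not:perm}). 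Together with full faithfulness and the $\CR$-linear monoidality of $F\circ h$, this yields the asserted equivalence of $\CR$-linear tensor categories.

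The step I expect to require the most care is this last identification: checking that $F$ of a representable is \emph{literally} the permutation module $\CR(X(\FFsep))$ — that the $\Gamma$-action produced by $\Psi_{\FFsep}$ in \Cref{fiber-functor} is genuinely the permutation action on the basis $X(\FFsep)$ — rather than merely abstractly isomorphic to a permutation module, which means threading that construction carefully through the equivalences used to prove \Cref{Galois-equivalence}. A more hands-on alternative, in the spirit of the heuristic sketched in \Cref{Rem:corr-spans}, would be to present both $\Corr^0_\FF\otimes\CR$ and $\perm(\Gamma;\CR)$ as quotients of $\Omega_\CR(\Gamma)$ (identified with $\Omega_\CR(\FF)$ via Galois) by the ideal of ``index relations'' (\cf \Cref{Omega-perm-quotient} and~\eqref{eq:special-etale-span}); there the main obstacle becomes verifying, via \Cref{Rem:R-on-morphisms}, that pushforward of zero-cycles along a span agrees on morphisms with the functor $\CR(-)$ — the same verification in another guise.
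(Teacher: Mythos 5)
Your proposal is correct and follows essentially the same route as the paper: the paper's (terse) proof also proceeds by noting that the composite $\Corr^0_\FF\otimes\CR\to\Sh{\Nis}{\Corr^0_\FF}{\CR}\xto{\aet}\Sh{\et}{\Corr^0_\FF}{\CR}$ is fully faithful and then identifying the essential image under $\Psi_{\FFsep}$ with the permutation modules. You have usefully filled in what the paper calls ``easy to see'': that $hX$ is already an \'etale sheaf with transfers (so $\aet\circ h\cong h$, whence full faithfulness via Yoneda and $\Psi_{\FFsep}$), and the explicit identification $\Psi_{\FFsep}(hX)\cong\CR(X(\FFsep))$ with its natural $\Gamma$-action.
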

\begin{proof}
  The composite
  \[
    \Corr^0_{\FF}\otimes\CR\to\Sh{\Nis}{\Corr^0_{\FF}}{\CR}\xto{\aet}\Sh{\et}{\Corr^0_{\FF}}{\CR}
  \]
  is fully faithful, and it is easy to see that under the equivalence $\Psi_{\FFsep}$ the image corresponds precisely to the permutation modules.
\end{proof}

\begin{Rem}
  \label{sheaves-picture}%
  We may summarize \Cref{Corr-perm-equivalence} by the commutative diagram
\begin{equation*}
  \vcenter{\xymatrix@R=1.5em{
    \perm(\Gamma;\CR)
    \ar@{ >->}[dd]
    &
    \Corr_{\FF}^0\otimes\CR
    \ar[l]_-{\sim}
    \ar@{ >->}[d]
    \\
    &
    \Sh{\Nis}{\Corr_{\FF}^0}{\CR}
    \ar[d]^{\aet}
    \\
    \MMod{\Gamma;\CR}
    &
    \Sh{\et}{\Corr_{\FF}^0}{\CR}
    \ar[l]^{\sim}_{\Psi_{\FFsep}}
    }}\label{eq:yoneda-picture}
  \end{equation*}
  where the left vertical arrow is the canonical inclusion, and the first vertical arrow on the right is the Yoneda embedding.
\end{Rem}

\begin{Rem}
\label{Rem:sheaves-mackey-functors}%
The Nisnevich topology on $\Corr^0_{\FF}$ is very simple:
Every cover splits, and a presheaf with transfers on $\Corr^0_{\FF}$ (\ie an additive contravariant functor) is therefore automatically a Nisnevich sheaf with transfers.
It follows that we may extend $\Psi_{\FFsep}$ to Nisnevich sheaves with transfers and deduce the following consequence.
\end{Rem}

\begin{Cor}
\label{sheaves-transfers-mackey}%
There is an equivalence of $\CR$-linear (abelian) categories
\[
\Psi_{\FFsep}:\Sh{\Nis}{\Corr^0_\FF}{\CR}\isoto\CohMack[\CR]{\Gamma}
\]
between sheaves with transfers on zero-dimensional smooth schemes, and cohomological Mackey functors.
\qed
\end{Cor}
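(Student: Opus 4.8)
The plan is to obtain the statement formally from the three structural results established just above: the triviality of the Nisnevich topology on $\Corr^0_\FF$ (\Cref{Rem:sheaves-mackey-functors}), the equivalence of $\CR$-linear tensor categories $\Corr^0_\FF\otimes\CR\isoto\perm(\Gamma;\CR)$ of \Cref{Corr-perm-equivalence}, and the identification $\CohMack[\CR]{\Gamma}\simeq\PSh{\CR}{\perm(\Gamma;\CR)}$ of \Cref{Cor:comack-presh-perm}. Once these are in hand, no further geometric or representation-theoretic input is needed: the corollary is a bookkeeping exercise about presheaf categories, which is precisely why it is recorded as a corollary. The genuine content lies in the three cited statements.

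First I would rewrite the source category. By \Cref{Rem:sheaves-mackey-functors}, every cover in the Nisnevich topology on $\Corr^0_\FF$ splits, so that a presheaf with transfers on $\Corr^0_\FF$ is automatically a Nisnevich sheaf; hence $\Sh{\Nis}{\Corr^0_\FF}{\CR}$ is simply the category of additive functors $(\Corr^0_\FF)\op\to\MMod{\CR}$. Since $\MMod{\CR}$ is $\CR$-linear, such a functor factors uniquely through the $\CR$-linearization, so this category is canonically equivalent to $\PSh{\CR}{\Corr^0_\FF\otimes\CR}$. Next, an equivalence of (essentially small) $\CR$-linear categories induces, by precomposition, an $\CR$-linear exact equivalence of the associated presheaf (= right-module) categories; applying this to \Cref{Corr-perm-equivalence} gives $\PSh{\CR}{\Corr^0_\FF\otimes\CR}\isoto\PSh{\CR}{\perm(\Gamma;\CR)}$. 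Finally, \Cref{Cor:comack-presh-perm} identifies the right-hand side with $\CohMack[\CR]{\Gamma}$. Composing the three equivalences yields the asserted $\CR$-linear equivalence of abelian categories (it can moreover be upgraded to a tensor equivalence via Day convolution on both sides, using that all three equivalences respect tensor structures, although this is not claimed here).

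The one point demanding a little care is checking that the composite deserves to be called $\Psi_{\FFsep}$, i.e.\ that it extends the fiber functor of \Cref{fiber-functor}. On representable objects $\corr^0_\FF(-,X)\otimes\CR$ this is exactly the content of \Cref{Corr-perm-equivalence}, which builds its equivalence $\Corr^0_\FF\otimes\CR\isoto\perm(\Gamma;\CR)$ as the restriction of $\Psi_{\FFsep}\circ\aet$ to representables. Since every object of $\Sh{\Nis}{\Corr^0_\FF}{\CR}$ is a colimit of such representables and all the functors in sight preserve colimits, the identification propagates to the whole category; equivalently, one checks that the square comparing $\Psi_{\FFsep}$ on étale sheaves (where $\Sh{\et}{\Corr^0_\FF}{\CR}\simeq\MMod{\Gamma;\CR}$ by \Cref{Galois-equivalence}) with its Nisnevich extension commutes, using the embedding $\MMod{\Gamma;\CR}\hookrightarrow\CohMack[\CR]{\Gamma}$ of \Cref{Cons:FP}. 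I expect this compatibility bookkeeping to be the only mildly delicate step; the existence of the equivalence itself is immediate.
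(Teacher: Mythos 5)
Your proposal is correct and takes essentially the same route the paper leaves implicit: the paper puts the whole argument in \Cref{Rem:sheaves-mackey-functors} (triviality of the Nisnevich topology on $\Corr^0_\FF$, so sheaves are just additive presheaves, so one may ``extend'' $\Psi_{\FFsep}$) and records the corollary with a bare \textup{\qedsymbol}, whereas you spell out the chain $\Sh{\Nis}{\Corr^0_\FF}{\CR}\simeq\PSh{\CR}{\Corr^0_\FF\otimes\CR}\simeq\PSh{\CR}{\perm(\Gamma;\CR)}\simeq\CohMack[\CR]{\Gamma}$ using \Cref{Corr-perm-equivalence} and \Cref{Cor:comack-presh-perm} explicitly, and moreover verify that the composite genuinely extends the fiber functor of \Cref{fiber-functor} by comparing on representables. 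This is the same argument, with the compatibility check made explicit rather than elided.
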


\begin{Rem}
\label{FP}%
The equivalences established in the last few results are related through the two commutative squares (of left and right adjoints, respectively):
\[
\xymatrix{
  \CohMack[\CR]{\Gamma}
  \ar@{}[d]|\dashv
  \ar@/^-1pc/[d]_\FPL
  &
  \Sh{\Nis}{\Corr^0_\FF}{\CR}
  \ar@{}[d]|\dashv
  \ar@/^-1pc/[d]_{\aet}
  \ar[l]_{\Psi_{\FFsep}}^\sim
  \\
  \MMod{\Gamma;\CR}
  \ar@/^-1pc/[u]_{\FP}
  &
  \Sh{\et}{\Corr^0_\FF}{\CR}
  \ar[l]^\sim_{\Psi_{\FFsep}}
  \ar@/^-1pc/[u]
}
\]
Here, the right adjoint to \'etale sheafification is just the obvious inclusion, and $\FPL\dashv\FP$ is the adjunction of \Cref{Rem:LP}.
\end{Rem}

\section{Artin motives}
\label{sec:motivic}

In this section, we want to explain the connection with Artin motives alluded to in the introduction.
This is originally due to Voevodsky~\cite[\S\,3.4]{Voevodsky00}.
We keep the notation and hypotheses of \Cref{Hyp:field-galois-group}, that is, $\FF$ is a field with absolute Galois group $\Gamma=\Gal(\FFsep/\FF)$.

To avoid cumbersome notation, we will from now on write $X\in\Sh{\tau}{\Corr_\FF}{\CR}$ for the sheaf with transfers $\corr_\FF(-,X)\otimes\CR$ whenever $X$ is a smooth $\FF$-scheme.
\begin{Rec}
  \label{DMeff}%
  The category of \emph{effective motives} over $\FF$ (with $\CR$-linear coefficients) is the Verdier localization of the derived category of sheaves with transfers
  \begin{equation}
  \label{eq:DMeff}
    \DMbigeff(\FF;\CR):=\frac{\D(\Sh{\Nis}{\Corr_{\FF}}{\CR})}{\Loc{\SET{\cone(\AA^1_X\to X)}{X\in\Sm_{\FF}}}}
  \end{equation}
obtained by inverting $\AA^1_X\to X$ for every smooth $\FF$-scheme~$X$.

The compact part of $\DMbigeff(\FF;\CR)$ is called the category of \emph{effective geometric motives}, and is denoted
\[
\DMeff(\FF;\CR):=\DMbigeff(\FF;\CR)^c.
\]

Let $X$ be a smooth $\FF$-scheme.
The image of $X$ in $\DMeff(\FF;\CR)$ is called the \emph{(effective) motive} of $X$. (Since we will deal exclusively with effective motives in the sequel, we will often drop the adjective.)
\end{Rec}

\begin{Rem}
  \label{DMeff-structure}%
  The tensor product on sheaves with transfers induces a tensor product on the derived category~\cite[\S\,5, or 11.1.2]{cisinski-deglise:DM}.
  The kernel of the Verdier localization in~(\ref{eq:DMeff}) is an ideal hence $\DMbigeff(\FF;\CR)$ inherits the structure of a tensor category.
  Also, the triangulated category $\D(\Sh{\Nis}{\Corr_{\FF}}{\CR})$ is compactly generated by (the sheaves with transfers representing) smooth $\FF$-schemes. (This follows from the finite cohomological dimension with respect to the Nisnevich topology~\cite[1.2.5]{kato-saito:global-CF-arithmetic}.)
  It follows that $\DMbigeff(\FF;\CR)$ is compactly generated by motives of smooth schemes.
\end{Rem}

\begin{Prop}
\label{Prop:derived-embedding-artin}%
\begin{enumerate}[\rm(a)]
\item
The functor induced by \Cref{sheaves-with-transfers-properties},
\[
\iota_!:\D(\Sh{\Nis}{\Corr^0_{\FF}}{\CR})\into\D(\Sh{\Nis}{\Corr_{\FF}}{\CR}),
\]
is tensor triangulated and fully faithful.
\item
Its composite with the quotient of~\eqref{eq:DMeff} remains fully faithful:
\[
\iota_!:\D(\Sh{\Nis}{\Corr^0_{\FF}}{\CR})\into\DMbigeff(\FF;\CR).
\]
\end{enumerate}
\end{Prop}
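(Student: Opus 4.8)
For part~(a) the plan is formal. By \Cref{sheaves-with-transfers-properties}, $\iota_!$ is an exact, fully faithful tensor functor, and it has a right adjoint $\iota^*$ given by restriction along $\iota$; this $\iota^*$ is exact too, because on $\Corr^0_\FF$ every additive presheaf is already a Nisnevich sheaf (\Cref{Rem:sheaves-mackey-functors}) and a $0$-dimensional scheme has Nisnevich cohomological dimension~$0$, so taking sections over $0$-dimensional schemes is an exact operation. An exact adjoint pair between abelian categories induces, by applying the functors term by term to complexes, an adjoint pair of triangulated functors between the unbounded derived categories; since the unit $\Id\Rightarrow\iota^*\iota_!$ is a natural isomorphism at the abelian level, so is the unit of the derived adjunction, whence the derived $\iota_!$ is fully faithful. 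For the monoidal statement, recall that the tensor product on $\D(\Sh{\Nis}{\Corr_\FF}{\CR})$ is computed via resolutions by complexes of direct sums of the representable sheaves $\corr_\FF(-,X)\otimes\CR$, which satisfy $\corr_\FF(-,X)\otimes\corr_\FF(-,X')\cong\corr_\FF(-,X\times X')$; since $\iota_!$ preserves all colimits and sends $\corr^0_\FF(-,Y)\otimes\CR$ to $\corr_\FF(-,Y)\otimes\CR$ (immediate from $\iota_!=\atau\hat{\iota}_!$ in the proof of \Cref{sheaves-with-transfers-properties}, $\hat{\iota}_!$ being a left Kan extension and hence carrying the representable presheaf of $Y$ to that of $\iota(Y)$), it preserves such resolutions; being induced by the strict monoidal functor $\iota\colon\Corr^0_\FF\into\Corr_\FF$ it is monoidal, hence tensor triangulated.

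For part~(b) the plan is to realise $\DMbigeff(\FF;\CR)$ as a subcategory of $\D(\Sh{\Nis}{\Corr_\FF}{\CR})$ and to check that $\iota_!$ lands inside it. Since $\AA^1_X$ and $X$, and therefore $\cone(\AA^1_X\to X)$, are compact (\Cref{DMeff-structure}), the Verdier localisation~\eqref{eq:DMeff} is a Bousfield localisation $L$: it admits a fully faithful right adjoint $R$ whose image is the localising subcategory $\cL:=\SET{\cone(\AA^1_X\to X)[n]}{X\in\Sm_\FF,\ n\in\bbZ}^\perp$ of \emph{$\AA^1$-local} objects. For such a localisation, and any object $a$ in the image of $R$, one has $\Hom(Lb,La)\cong\Hom(b,RLa)=\Hom(b,a)$ for every $b$; taking $a=\iota_!C$ and $b=\iota_!C'$ and invoking part~(a), it suffices to prove that $\iota_!C\in\cL$ for every $C\in\D(\Sh{\Nis}{\Corr^0_\FF}{\CR})$. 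Now $\cL$ is localising, $\iota_!$ preserves coproducts and distinguished triangles, and $\D(\Sh{\Nis}{\Corr^0_\FF}{\CR})$ is generated by the representable sheaves with transfers $\corr^0_\FF(-,Y)\otimes\CR$ of $0$-dimensional smooth $\FF$-schemes $Y$ (\Cref{sheaves-with-transfers}); so the task reduces to showing that each $\iota_!(\corr^0_\FF(-,Y)\otimes\CR)=\corr_\FF(-,Y)\otimes\CR$ lies in~$\cL$.

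It then remains to prove the ``$\AA^1$-rigidity'' of $0$-dimensional schemes: the representable sheaf with transfers $F:=\corr_\FF(-,Y)\otimes\CR$ of a $0$-dimensional smooth $Y$ is $\AA^1$-local. Via the standard identification $\Hom_{\D(\Sh{\Nis}{\Corr_\FF}{\CR})}(\corr_\FF(-,X)\otimes\CR,F[n])\cong H^n_{\Nis}(X,F)$, membership in $\cL$ amounts to $H^n_{\Nis}(X,F)\to H^n_{\Nis}(\AA^1_X,F)$ being an isomorphism for all smooth $X$ and all~$n$. In degree $0$ I would check this directly: for connected smooth $X$ the projection $\AA^1_X\times_\FF Y\to\AA^1_X$ is finite étale, so an integral closed subscheme of $\AA^1_X\times_\FF Y$ which is finite and surjective over $\AA^1_X$ is a connected component, necessarily of the form $\AA^1_{X'}$ with $X'$ a connected component of $X\times_\FF Y$; hence $\corr_\FF(\AA^1_X,Y)$ and $\corr_\FF(X,Y)$ are both free on $\pi_0(X\times_\FF Y)$, compatibly with the zero section. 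In positive degrees both sides vanish, since the underlying Nisnevich sheaf $X\mapsto\CR[\pi_0(X\times_\FF Y)]$ of $F$ is locally constant of a particularly simple kind and thus has no higher Nisnevich cohomology on smooth schemes.

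The step I expect to be the \emph{main obstacle} is exactly this last one: upgrading $\AA^1$-invariance of the presheaf $F$ to genuine $\AA^1$-locality of its associated complex, \ie controlling the higher Nisnevich cohomology of $F$. In general this rests on Voevodsky's strict homotopy invariance theorem, and here it must be obtained either from that theorem or, more directly, from the simple structure of sheaves represented by $0$-dimensional schemes. This is precisely the input originally due to Voevodsky~\cite[\S\,3.4]{Voevodsky00} (see also~\cite{cisinski-deglise:DM,cisinski-deglise:etale-motives}); the rest of the argument is formal.
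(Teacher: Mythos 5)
Your proof takes essentially the same route as the paper. Part~(a) matches: $\iota_!$ and $\iota^*$ are both exact, so they pass to triangle functors on unbounded derived categories with the unit still invertible; the monoidal compatibility is as you say. For part~(b) the structural reduction is also the paper's plan: using that the localization~\eqref{eq:DMeff} is a Bousfield localization onto the $\AA^1$-local objects, that this subcategory is localizing, and that $\D(\Sh{\Nis}{\Corr^0_\FF}{\CR})$ is generated by representables, one reduces to showing that each $\corr_\FF(-,Y)\otimes\CR$ for $Y$ zero-dimensional smooth is $\AA^1$-local, which by Voevodsky's identification $\Hom_{\D}(\corr_\FF(-,X)\otimes\CR,F[n])\cong \Hm^n_{\Nis}(X,F)$ is a statement about Nisnevich cohomology.

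The place where the paper does something you do not — and which dissolves the ``main obstacle'' you honestly flag — is one extra reduction by self-duality. Since the objects of $\Corr^0_\FF$ are their own tensor duals (cf.\ \Cref{Rem:perm-dual}), one may trade
\[
\Hom_{\D}\bigl(X,\Sigma^i\Spec(\FF')\bigr)\;\;\text{for}\;\;\Hom_{\D}\bigl(X_{\FF'},\Sigma^i\CR\bigr),
\]
moving the $0$-dimensional scheme into the covariant slot and replacing the coefficients by the \emph{constant} sheaf $\CR$ over $X'=X_{\FF'}$. Your locally constant sheaf $X\mapsto\CR[\pi_0(X\times_\FF Y)]$ thus becomes a genuinely constant sheaf over a base-changed smooth scheme, and the needed vanishing is exactly \Cref{nis-cohomology-constant}: on an irreducible, geometrically unibranch scheme $X$, $\Hm^i_{\Nis}(X,F)=0$ for constant $F$ and $i>0$. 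That lemma is proved by showing $F\isoto\eta_*\eta^*F$ for the generic point $\eta$ (using \cite[18.10.7]{EGAIV.4}) and that every sheaf on $\Spec(K(X))_{\Nis}$ is flabby, so $F$ is flabby. Your direct approach through locally constant sheaves of the specific form above would also close the gap, but it amounts to reproving a slight strengthening of that lemma; the duality trick is what lets the paper get away with the cleaner statement about constant sheaves.
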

\begin{proof}
The functor $\iota_!$ of \Cref{sheaves-with-transfers-properties} was shown to be tensor, fully faithful and exact.
As it also has an exact right adjoint $\iota^*$, it follows that the unit of the adjunction at the level of derived categories remains an isomorphism.
This shows the first statement.

For the second statement, it suffices to prove that the image of each $0$-dimensional smooth $\FF$-scheme in $\D(\Sh{\Nis}{\Corr_{\FF}}{\CR})$ is local with respect to the morphisms
  \[
    \AA^1_X\to X
  \]
  for each smooth $\FF$-scheme $X$.
  Thus let $\FF'/\FF$ be a finite separable field extension.
  We want to prove that the map
  \[
    \Hom_{\D(\Sh{\Nis}{\Corr_{\FF}}{\CR})}(X,\Sigma^i\Spec(\FF'))\to\Hom_{\D(\Sh{\Nis}{\Corr_{\FF}}{\CR})}(\AA^1_X,\Sigma^i\Spec(\FF'))
  \]
  is bijective for each $i\in\bbZ$.
  As the objects of $\Corr_\FF^0$ are their own tensor duals, this is equivalent to showing
  \[
    \Hom_{\D(\Sh{\Nis}{\Corr_{\FF}}{\CR})}(X',\Sigma^i\CR)\to\Hom_{\D(\Sh{\Nis}{\Corr_{\FF}}{\CR})}(\AA^1_{X'},\Sigma^i\CR)
  \]
  bijective, where we set $X'=X_{\FF'}$.
  But by \cite[Proposition~3.1.8]{Voevodsky00}, this identifies with the canonical map
  \[
    \Hm_{\Nis}^i(X',\CR)\to\Hm_{\Nis}^i(\AA^1_{X'},\CR).
  \]
  Both sides vanish for $i>0$ by \Cref{nis-cohomology-constant} below and for $i<0$, and the map is an isomorphism for $i=0$:
  \[
    R^{\pi_0(X')}\isoto R^{\pi_0(\AA^1_{X'})}.\qedhere
  \]
\end{proof}

\begin{Lem}
  \label{nis-cohomology-constant}%
  Let $X$ be an irreducible, geometrically unibranch (\eg normal) scheme, and $F$ a constant sheaf of $\CR$-modules on the small Nisnevich site $X_{\Nis}$.
  Then $\Hm^i_{\Nis}(X,F)=0$ for all $i>0$.
\end{Lem}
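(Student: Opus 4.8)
The plan is to reduce, by the \v{C}ech-to-derived spectral sequence, to the vanishing of \v{C}ech cohomology, and then to exploit that geometric unibranchness forces the connected components of étale $X$-schemes to be irreducible. Write $M=F(X)$, so that $F=\underline M$ is the constant sheaf attached to the $\CR$-module $M$; for any $U$ in the small Nisnevich site of $X$ one has $\underline M(U)=M^{\pi_0(U)}$, the product over the connected components of $U$. Recall two standard facts: a connected geometrically unibranch scheme is irreducible, and geometric unibranchness is stable under étale morphisms. Hence for every $U$ étale over $X$, each connected component of $U$ is an irreducible geometrically unibranch scheme, again étale over $X$, with generic point lying over the generic point $\eta$ of $X$.

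\emph{Step 1: reduction to \v{C}ech cohomology.} I would argue by induction on $n\ge 1$, over the class of all irreducible geometrically unibranch schemes, that $\Hm^i_{\Nis}(X,\underline M)=0$ for $0<i\le n$. For the inductive step one uses the first-quadrant spectral sequence $E_2^{p,q}=\check{\Hm}^p(X,\mathcal H^q(\underline M))\Rightarrow \Hm^{p+q}_{\Nis}(X,\underline M)$, where $\mathcal H^q(\underline M)$ is the presheaf $U\mapsto \Hm^q_{\Nis}(U,\underline M)$. By the inductive hypothesis applied to the (irreducible geometrically unibranch) connected components of étale $X$-schemes, the presheaf $\mathcal H^q(\underline M)$ vanishes for $0<q<n$; its sheafification vanishes for $q=n$ (higher-cohomology presheaves always sheafify to zero), so $\check{\Hm}^0(X,\mathcal H^n(\underline M))=0$; and $\mathcal H^0(\underline M)=\underline M$. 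Hence along total degree $n$ the only surviving $E_2$-term is $E_2^{n,0}=\check{\Hm}^n(X,\underline M)$, and it remains to prove that this group vanishes for all irreducible geometrically unibranch $X$ and all $n\ge 1$.

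\emph{Step 2: normalising the covers.} One has $\check{\Hm}^n(X,\underline M)=\colim_{\mathcal U}\check{\Hm}^n(\mathcal U,\underline M)$ over Nisnevich covers $\mathcal U=\{U_\alpha\to X\}$. Since $\mathcal U$ is completely decomposed over $\eta$, some $U_{\alpha_0}$ carries a point over $\eta$ with trivial residue field extension, i.e. a section of $U_{\alpha_0}\times_X\eta\to\eta$; by a standard limit argument (writing $\eta$ as the cofiltered limit of the Zariski neighbourhoods of $\eta$ in $X$) this section spreads out to a section $s\colon V\to U_{\alpha_0}\times_X V$ over a dense affine open $V\subseteq X$. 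A section of an étale morphism is an étale monomorphism, hence an open immersion, so $s$ realises $V$ as an open subscheme of $U_{\alpha_0}$ for which the composite $j'\colon V\hookrightarrow U_{\alpha_0}\to X$ is the open immersion $V\hookrightarrow X$. Thus $\mathcal U$ is refined by $\mathcal V=\{V\hookrightarrow X\}\cup\{U_\alpha\to X\}$, and it suffices to treat covers of this special shape, and in fact to show that after one further refinement of the same shape the \v{C}ech cochain complex of $\underline M$ becomes split exact in positive degrees.

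\emph{Step 3: the crux.} Identify $\check{\Hm}^\bullet(\mathcal V,\underline M)$ with the cohomology, with coefficients in $M$, of the simplicial set $\pi_0$ of the \v{C}ech nerve of $\mathcal V$. This is where geometric unibranchness is used decisively: every fibre product occurring in the \v{C}ech nerve is étale over $X$, hence has irreducible connected components, each with generic point over $\eta$; so the components of the \v{C}ech nerve of $\mathcal V$ over $X$ are pinned down by the points of the \v{C}ech nerve of the base-changed cover $\mathcal V\times_X\eta$ of the single point $\eta=\Spec k(\eta)$ --- and a Nisnevich cover of a field is split. Transporting the contracting homotopy of the split cover of $\eta$ back along this identification, in the colimit over refinements, should yield $\check{\Hm}^n(X,\underline M)=0$ for all $n\ge1$. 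I expect this last step to be the main obstacle: Steps~1 and~2 are formal (once one knows connected components of étale $X$-schemes are irreducible), whereas making precise the passage ``components of the \v{C}ech nerve over $X$'' $\leadsto$ ``the split cover of the generic point'', and checking that geometric unibranchness really does let one contract the $\pi_0$ of the \v{C}ech nerve after a bounded refinement, is exactly the place where the hypothesis is essential and where care is required.
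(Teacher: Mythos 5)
Your approach is conceptually sound and rests on the same decisive observation as the paper's proof, namely that for $X$ irreducible and geometrically unibranch and $U\to X$ \'etale, the set $\pi_0(U)$ is canonically in bijection with the fibre $U_\eta$ over the generic point $\eta$ (each connected component of $U$ is irreducible and hits $\eta$ exactly once). But you package it far more laboriously. The paper's proof is essentially two lines: the inclusion $\eta\colon\Spec K(X)\into X$ induces a morphism of Nisnevich topoi, and the unibranch hypothesis says precisely that the unit $F\to\eta_*\eta^*F$ is an isomorphism (since $F(U)=M^{\pi_0(U)}$ and $(\eta_*\eta^*F)(U)=M^{U_\eta}$); every Nisnevich sheaf on a field is flabby because all covers split, direct image preserves flabbiness, so $F$ is flabby and its higher cohomology vanishes outright. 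No spectral sequence, no \v{C}ech machinery, no induction.

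Your Step~3 worry is in fact misplaced, and your Steps~1 and~2 are mostly unnecessary. For \emph{any} Nisnevich cover $\mathcal U$ of $X$, the identification $\pi_0\!\left(\text{\v{C}ech nerve of }\mathcal U/X\right)\cong\text{\v{C}ech nerve of }\mathcal U_\eta/\eta$ holds levelwise (each term of the nerve is \'etale over $X$, so the bijection $\pi_0\cong(-)_\eta$ applies), and the definition of a Nisnevich cover hands you a $K(X)$-rational point in some $U_{\alpha_0,\eta}$ directly, i.e.\ a splitting of $\mathcal U_\eta\to\eta$ — you do not need the spreading-out argument of Step~2 to produce a section. The split cover's nerve has an extra degeneracy, hence is contractible, and so $\check{\Hm}^n(\mathcal U,\underline M)=0$ for every $n\ge1$ and every cover $\mathcal U$; no passage to a colimit over refinements is needed. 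Combined with your Step~1 induction this closes the argument. So the proposal is recoverable and is a genuinely different (\v{C}ech) route, but it trades a crisp ``pushforward of flabby is flabby'' observation for a descending spectral-sequence induction and a contractibility-of-the-nerve calculation, with no gain in generality.
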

\begin{proof}
  Let $K(X)$ denote the function field of $X$, and consider the inclusion
  \[
    \eta:\Spec(K(X))\into X
  \]
  which induces a morphism of sites
  \[
    (-)_\eta:X_{\Nis}\to \Spec(K(X))_{\Nis}
  \]
  and associated morphism of topoi
  \[
    \eta^*:\Sh{\Nis}{\Et_X}{\CR}\rightleftarrows\Sh{\Nis}{\Et_{\Spec(K(X))}}{\CR}:\eta_*.
  \]
  It follows directly from the properties of \'etale morphisms with target $X$ in~\cite[Proposition~18.10.7]{EGAIV.4}
  that the canonical morphism
  \begin{equation}
    F\to \eta_*\eta^*F\label{eq:constant-sheaf-iso}
  \end{equation}
  is an isomorphism.
  Note that every Nisnevich cover in $\Et_{\Spec(K(X))}$ splits and hence every sheaf is flabby.
  As direct images preserve flabby sheaves, it follows from the isomorphism~(\ref{eq:constant-sheaf-iso}) that $F$ is flabby as well.
  This concludes the proof.
\end{proof}

\begin{Rem}
  \label{Kbperm-DM-alternative}%
  Alternatively, if $\FF$ is a perfect field, the subcategory of $\AA^1$-local objects in $\D(\Sh{\Nis}{\Corr_{\FF}}{\CR})$ identifies with those complexes whose homology sheaves are $\AA^1$-invariant.
  (In~\cite[Proposition~3.2.3]{Voevodsky00} this is stated only for right-bounded complexes; the general case is in~\cite[Theorem~4.4]{beilinson-vologodsky:dg-voevodsky-motives}.)
  By \Cref{0-corr}, we see that an object of $\Corr^0_{\FF}$ defines an $\AA^1$-invariant Nisnevich sheaf with transfers, and this yields a shorter proof of \Cref{Prop:derived-embedding-artin} for $\FF$ perfect.
\end{Rem}

\begin{Not}
  \label{DAM}%
  We define $\DAMbig(\FF;\CR)$ as the localizing subcategory of $\DMbigeff(\FF;\CR)$ generated by the motives of 0-dimensional smooth $\FF$-schemes, and we call it the category of \emph{Artin motives}.
  It is a compactly generated tensor triangulated category.
  Its compact part, the category of \emph{geometric Artin motives},
  \[
    \DAM(\FF;\CR):=\DAMbig(\FF;\CR)^c
  \]
  can also be described as the thick subcategory of $\DMeff(\FF;\CR)$ generated by the motives of 0-dimensional smooth $\FF$-schemes (\Cref{Rec:Neeman-loc}\,\eqref{it:Neeman-compacts}).
\end{Not}

\begin{Rem}
  \label{DAM-terminology}%
  Recall that $\DMeff(\FF;\CR)$ is a full subcategory of the category of Voevodsky motives $\DM(\FF;\CR)$~\cite[Theorem~4.3.1]{Voevodsky00}.
  The latter is obtained from the former by tensor-inverting the Tate object $R(1)$, and thereby turning each object rigid (for the tensor structure).
  As we observed above, the motives of 0-dimensional smooth $\FF$-schemes are rigid; in fact, they are their own tensor duals.
  It follows that $\DAM(\FF;\CR)$ is already a rigid tensor triangulated category, and this explains why one does not distinguish between an effective and non-effective version of Artin motives.

  Artin representations are typically understood as finite dimensional representations of the absolute Galois group of a field.
  Originally the vector spaces were over the field of complex number, and the base field was the field of rational numbers.
  Later, other base fields were considered, and in the motivic community it is not unusual to consider more general coefficients.
  This sheds some light on the terminology introduced in \Cref{DAM}.
\end{Rem}

The following result completes the picture~\eqref{eq:triangle-derived} discussed in the introduction.
\begin{Cor}
  \label{DPerm-DAM}%
  There are canonical equivalences of tensor triangulated categories
  \[
    \DPerm(\Gamma;\CR)\xisoto{\Psi_{\FFsep}\inv\circ\FP}\D(\Sh{\Nis}{\Corr^0_\FF}{\CR})\xisoto{\iota_!}\DAMbig(\FF;\CR).
  \]
  They restrict to equivalences of tensor triangulated categories of compacts
  \[
    \Kb(\perm(\Gamma;\CR)^{\natural})\isoto \Kb((\Corr_\FF^0\otimes\CR)^{\natural})\isoto \DAM(\FF;\CR).
  \]
\end{Cor}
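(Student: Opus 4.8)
The plan is to assemble both equivalences from results already in place, keeping track of the various tensor structures and of the relevant sets of compact generators. For the first equivalence I would compose the tensor triangulated equivalence $\FP\colon\DPerm(\Gamma;\CR)\isoto\D(\CohMack[\CR]{\Gamma})$ of \Cref{Cor:DPerm-comack} with the equivalence of derived categories induced by the equivalence of $\CR$-linear abelian categories $\Psi_{\FFsep}\colon\Sh{\Nis}{\Corr^0_\FF}{\CR}\isoto\CohMack[\CR]{\Gamma}$ of \Cref{sheaves-transfers-mackey}. As $\FP$ is already monoidal, it remains only to see that this induced derived equivalence is monoidal. For that I would use that, by \Cref{Cor:comack-presh-perm} and \Cref{Corr-perm-equivalence}, both abelian categories are presheaf categories, $\PSh{\CR}{\perm(\Gamma;\CR)}$ and $\PSh{\CR}{\Corr^0_\FF\otimes\CR}$, with their Day convolution tensor structures (on the side of sheaves with transfers the Nisnevich sheafification plays no role, \Cref{Rem:sheaves-mackey-functors}), and that $\Psi_{\FFsep}$ is induced by the $\CR$-linear tensor equivalence $\Corr^0_\FF\otimes\CR\isoto\perm(\Gamma;\CR)$; since Day convolution is functorial in the underlying additive tensor category and the derived tensor products are computed by resolving with complexes of representables, the derived equivalence is monoidal.

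For the second equivalence I would observe that the functor $\iota_!\colon\D(\Sh{\Nis}{\Corr^0_\FF}{\CR})\to\DMbigeff(\FF;\CR)$ of \Cref{Prop:derived-embedding-artin}\,(b) is fully faithful and tensor triangulated -- using part (a) of that proposition together with the fact (\Cref{DMeff-structure}) that the localization defining $\DMbigeff(\FF;\CR)$ is a tensor localization -- and that it preserves small coproducts, being a composite of coproduct-preserving functors. Its essential image is therefore the localizing subcategory of $\DMbigeff(\FF;\CR)$ generated by the images under $\iota_!$ of a set of compact generators of its source. Now $\D(\Sh{\Nis}{\Corr^0_\FF}{\CR})$ is compactly generated by the representable sheaves with transfers $\corr^0_\FF(-,X)\otimes\CR$ (same argument as in the proof of \Cref{Cor:DPerm-comack}, the Nisnevich topology on $\Corr^0_\FF$ being trivial), and $\iota_!$ carries these to the motives $\corr_\FF(-,X)\otimes\CR$ of the zero-dimensional smooth $\FF$-schemes $X$ (by the identification of $\iota_!$ on representables from the proof of \Cref{sheaves-with-transfers-properties}); by \Cref{DAM}, the localizing subcategory they generate is precisely $\DAMbig(\FF;\CR)$. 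Hence $\iota_!$ corestricts to the desired tensor triangulated equivalence.

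For the statement on compacts, both composite equivalences preserve coproducts and match the displayed sets of compact generators, so they restrict to equivalences on subcategories of compact objects (\Cref{Rec:Neeman-loc}\,\eqref{it:Neeman-compacts}); it then suffices to identify these. One has $\DPerm(\Gamma;\CR)^c=\Kb(\perm(\Gamma;\CR)^{\natural})$ by \Cref{Cor:DPerm-compacts}; the compact part of $\D(\Sh{\Nis}{\Corr^0_\FF}{\CR})$ is $\Kb((\Corr_\FF^0\otimes\CR)^{\natural})$ by the same reasoning, every representable presheaf on $\Corr_\FF^0\otimes\CR$ being projective (\cf \Cref{Prop:perm-proj-comack} and \Cref{Corr-perm-equivalence}); and $\DAMbig(\FF;\CR)^c=\DAM(\FF;\CR)$ by \Cref{DAM}.

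The only step I expect to be more than bookkeeping is the monoidality of the derived equivalence induced by $\Psi_{\FFsep}$: one must check that the derived Day convolution products on the two presheaf categories really are the \emph{ad hoc} derived tensor products at play on each side and that they correspond under the equivalence. The identifications of compact parts and of the essential image of $\iota_!$, by contrast, are routine manipulations with compact generators and localizing subcategories.
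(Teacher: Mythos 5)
Your proposal is correct and follows the same route as the paper, which merely cites \Cref{sheaves-transfers-mackey} and \Cref{Cor:DPerm-comack} for the first equivalence and observes that the fully faithful $\iota_!$ of \Cref{Prop:derived-embedding-artin} has image $\DAMbig(\FF;\CR)$; you have simply supplied the routine verifications (monoidality of the derived $\Psi_{\FFsep}$ via Day convolution, identification of the essential image of $\iota_!$ via compact generators, and identification of compact parts) that the paper leaves implicit.
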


\begin{proof}
The first equivalence is \Cref{sheaves-transfers-mackey} and \Cref{Cor:DPerm-comack}.
And the fully-faithful functor of \Cref{Prop:derived-embedding-artin} has image~$\DAMbig(\FF;\CR)$.
\end{proof}

\begin{Rem}
  \label{FP-inverse}%
  Of course, the inverse of the composite equivalence sends the motive of a finite separable field extension $\FF\subseteq\FF'$ corresponding to an open subgroup $\Gamma'\le\Gamma$ to the permutation module $\CR(\Gamma/\Gamma')$, \cf \Cref{Rec:fet-g-sets}.
\end{Rem}

\begin{Rem}
  \label{comparison-Voevodsky}%
  Assume $\FF$ is perfect.
  We may identify $\DMbigeff(\FF;\CR)$ with the full subcategory of $\D(\Sh{\Nis}{\Corr_{\FF}}{\CR})$ of complexes with $\AA^1$-invariant homology sheaves (\Cref{Kbperm-DM-alternative}).
  Restrict attention to the full subcategory
  \[
    \DMminuseff(\FF;\CR)
  \]
  spanned by those complexes whose homology is right-bounded (in addition to being $\AA^1$-invariant).
  This is Voevodsky's `big' category of effective motives~\cite{Voevodsky00}.

Note that $\D^{-}(\Sh{\Nis}{\Corr^0_{\FF}}{\CR})$ does not have all coproducts. Yet, it is the smallest triangulated subcategory (of itself)
  containing $\Kb(\perm(\Gamma;\CR))$ and closed under coproducts. So it follows from \Cref{Prop:derived-embedding-artin} that we have an equivalence
  \[
    \D^{-}(\Sh{\Nis}{\Corr^0_{\FF}}{\CR})\isoto\DAMbig(\FF;\CR)\cap\DMminuseff(\FF;\CR).
  \]
  If $\CR=\bbZ$, this recovers~\cite[Proposition~3.4.1]{Voevodsky00}.
\end{Rem}

\begin{Rem}
  \label{etale-realization}%
  Let us follow up on the picture presented in \Cref{sheaves-picture}.
  Replacing Nisnevich by \'etale sheaves in \Cref{DMeff} one obtains similarly a tensor triangulated category $\DMetbigeff(\FF;\CR)$.
  The sheafification functor $\aet$ passes to the level of motives and we obtain a diagram:
\begin{equation}
\label{eq:etale-realization}%
  \vcenter{\xymatrix{
  \DPerm(\Gamma;\CR)
  \ar[r]_-{\sim}^-{\Psi_{\FFsep}\inv\circ\FP}
  \ar[d]
  &
  \D(\Sh{\Nis}{\Corr^0_{\FF}}{\CR})
  \ar@{>->}[r]^-{\iota_!}
  \ar[d]^{\aet}
  &
  \DMbigeff(\FF;\CR)
  \ar[d]^{\aet}
  \\
  \D(\MMod{\Gamma;\CR})
  &
  \D(\Sh{\et}{\Corr^0_{\FF}}{\CR})
  \ar[r]^-{\iota_{!}}
  \ar[l]^{\sim}_{\Psi_{\FFsep}}
  &
  \DMetbigeff(\FF;\CR).
  }}
\end{equation}
  Here the left square is the derived analogue of the commutative square in \Cref{sheaves-picture}, and the right square is (induced from) a derived analogue of the commutative square in \Cref{sheaves-with-transfers-properties}.
\end{Rem}

\begin{Cor}
Assume that $R$ is $n$-torsion with $n\in\bbZ$ invertible in~$\FF$.
Under the equivalence of \Cref{DPerm-DAM}, the \'etale realization on Artin motives corresponds to the canonical functor which is the identity on objects:
\begin{equation}
\label{eq:Re-et-Artin}%
\vcenter{\xymatrix@C=4em{
  \DPerm(\Gamma;\CR)
  \ar[r]^{\textup{Cor.~\ref{DPerm-DAM}}}_{\sim}
  \ar[d]
  &
  \DAMbig(\FF;\CR)
  \ar[d]^{\Reet}
  \\
  \D(\MMod{\Gamma;\CR})
  \ar[r]^{=}
  &
  \D(\MMod{\Gamma;\CR}).
  }}
\end{equation}
\end{Cor}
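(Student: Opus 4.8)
The plan is to deduce the statement from the commutative diagram~\eqref{eq:etale-realization} of \Cref{etale-realization}, the one genuinely new ingredient being the identification of the target of the étale realization, restricted to Artin motives, with $\D(\MMod{\Gamma;\CR})$.

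First I would pin down what $\Reet$ does on $\DAMbig(\FF;\CR)$. By construction the étale realization on $\DMbigeff(\FF;\CR)$ is the sheafification functor $\aet$, the right-hand vertical arrow of~\eqref{eq:etale-realization}. Under \Cref{DPerm-DAM}, $\DAMbig(\FF;\CR)$ is the essential image of $\iota_!\colon\D(\Sh{\Nis}{\Corr^0_{\FF}}{\CR})\hookrightarrow\DMbigeff(\FF;\CR)$, and the right square of~\eqref{eq:etale-realization} (a derived analogue of the square in \Cref{sheaves-with-transfers-properties}) commutes; hence $\Reet$ restricted to Artin motives corresponds to $\iota_!\circ\aet\colon\D(\Sh{\Nis}{\Corr^0_{\FF}}{\CR})\to\D(\Sh{\et}{\Corr^0_{\FF}}{\CR})\to\DMetbigeff(\FF;\CR)$. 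Here the hypothesis on $\CR$ enters: for coefficients $n$-torsion with $n$ invertible in $\FF$, transfers on the étale site are automatic (\cite[Corollary~2.1.9]{cisinski-deglise:etale-motives}, as already used for \Cref{Galois-equivalence}) and étale cohomology with such coefficients is $\AA^1$-invariant, so the étale analogue of \Cref{Prop:derived-embedding-artin} goes through verbatim, with $\AA^1$-invariance of étale cohomology replacing \Cref{nis-cohomology-constant}. Thus $\iota_!\colon\D(\Sh{\et}{\Corr^0_{\FF}}{\CR})\to\DMetbigeff(\FF;\CR)$ is fully faithful, and composing its quasi-inverse with the (derived) tensor equivalence $\Psi_{\FFsep}\colon\D(\Sh{\et}{\Corr^0_{\FF}}{\CR})\isoto\D(\MMod{\Gamma;\CR})$ of \Cref{Galois-equivalence} provides the identification of the target of $\Reet$ on Artin motives tacit in the statement. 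After it, $\Reet$ on Artin motives corresponds to $\Psi_{\FFsep}\circ\aet\colon\D(\Sh{\Nis}{\Corr^0_{\FF}}{\CR})\to\D(\MMod{\Gamma;\CR})$.

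It remains to see that this agrees with the left-hand vertical arrow of~\eqref{eq:Re-et-Artin}. The latter is the left-hand vertical arrow of~\eqref{eq:etale-realization} precomposed with the equivalence $\Psi_{\FFsep}\inv\circ\FP$ of \Cref{DPerm-DAM}; and commutativity of the left square of~\eqref{eq:etale-realization} — the derived analogue of the square in \Cref{sheaves-picture}, transported through the equivalences of \Cref{Cor:DPerm-comack,sheaves-transfers-mackey,Galois-equivalence} — says precisely that this composite equals $\Psi_{\FFsep}\circ\aet\circ(\Psi_{\FFsep}\inv\circ\FP)$, which is what we just obtained. So~\eqref{eq:Re-et-Artin} commutes. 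Finally, the left vertical arrow of~\eqref{eq:etale-realization} is by construction induced by the inclusion $\Perm(\Gamma;\CR)\hookrightarrow\MMod{\Gamma;\CR}$: the functor $\K(\Perm(\Gamma;\CR))\to\D(\MMod{\Gamma;\CR})$ kills $\Gamma$-acyclic complexes — evaluating fixed points at $H=1$ shows such a complex is already acyclic in $\MMod{\Gamma;\CR}$ — and so descends to the Verdier quotient $\DPerm(\Gamma;\CR)$. This functor sends a permutation module $\CR(\Gamma/H)$, regarded as a complex in degree $0$, to the discrete module $\CR(\Gamma/H)$; that is, it is the identity on objects, as asserted.

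The step I expect to be the main obstacle is the identification of the target in the second paragraph — the verification that, for $n$-torsion coefficients with $n$ invertible in $\FF$, the étale realization of Artin motives genuinely takes values in $\D(\MMod{\Gamma;\CR})$, resting on the $\AA^1$-invariance of étale cohomology with such coefficients together with the automaticity of transfers on the étale site, i.e.\ on the étale-local form of rigidity. Once the target is correctly identified, the commutativity of~\eqref{eq:Re-et-Artin} is a formal consequence of squares established earlier in the paper.
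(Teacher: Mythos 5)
The paper's proof is a one-liner: cite the Rigidity Theorem of Cisinski--D\'eglise to conclude that the bottom-right arrow $\iota_!\colon\D(\Sh{\et}{\Corr^0_\FF}{\CR})\to\DMetbigeff(\FF;\CR)$ in~\eqref{eq:etale-realization} is an \emph{equivalence}, recall that under the hypothesis on $\CR$ the \'etale realization is precisely $\aet$ followed by the inverse of that equivalence (and $\Psi_{\FFsep}$), and read off the commutativity of~\eqref{eq:Re-et-Artin} from that of~\eqref{eq:etale-realization}. Your proposal tries to avoid that citation and instead re-derives part of it, and this is where the gap lies.

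Concretely: your argument establishes (at best) that $\iota_!\colon\D(\Sh{\et}{\Corr^0_{\FF}}{\CR})\to\DMetbigeff(\FF;\CR)$ is \emph{fully faithful}, by running an \'etale analogue of \Cref{Prop:derived-embedding-artin}. But what actually gives the \'etale realization its codomain $\D(\MMod{\Gamma;\CR})$, and makes the right vertical arrow of~\eqref{eq:Re-et-Artin} well-posed, is the rigidity statement that $\iota_!$ is an \emph{equivalence} --- i.e.\ that with $n$-torsion coefficients, $n$ invertible in~$\FF$, the category of \'etale motives is generated by Artin motives. Without essential surjectivity you have only constructed \emph{some} functor $\DAMbig(\FF;\CR)\to\D(\MMod{\Gamma;\CR})$ via the quasi-inverse of $\iota_!$ on its image, and you have no a priori reason to know that it agrees with $\Reet|_{\DAMbig}$: the latter is defined through the \emph{global} identification $\DMetbigeff(\FF;\CR)\simeq\D(\MMod{\Gamma;\CR})$, and matching it to your local construction requires exactly the statement that $\iota_!$ is an equivalence. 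Also, the phrase ``goes through verbatim, with $\AA^1$-invariance of \'etale cohomology replacing \Cref{nis-cohomology-constant}'' understates what is being used: \Cref{nis-cohomology-constant} is an elementary \emph{vanishing} result, whereas homotopy invariance of \'etale cohomology with torsion coefficients (both sides genuinely nonzero) is itself a substantial theorem lying in the same circle of ideas as rigidity; you are not replacing one easy lemma by another, you are appealing to a deep input, and you should at least flag it as such rather than sweep it under ``verbatim.''

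There is also a small but genuine error in your last paragraph: you justify that the left vertical arrow of~\eqref{eq:etale-realization} kills $\Gamma$-acyclics by ``evaluating fixed points at $H=1$.'' For a profinite group $\Gamma$ the trivial subgroup is typically \emph{not} open, so it is not among the $H$ for which $X^H$ is assumed acyclic. The correct argument is that a discrete module is the filtered colimit of its $H$-fixed submodules over open $H\le\Gamma$, and filtered colimits are exact in $\MMod{\Gamma;\CR}$, so $X\cong\colim_H X^H$ is acyclic whenever every $X^H$ is. The conclusion is right; the reason you gave only works for $\Gamma$ finite.

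In short: the structure of your proof (deduce everything from~\eqref{eq:etale-realization}) is right, but the one non-formal ingredient you must supply is precisely the equivalence $\iota_!$ from rigidity, and you only supply fully faithfulness. Invoke~\cite[Theorem~4.5.2]{cisinski-deglise:etale-motives} for the equivalence, as the paper does, and the rest of your argument goes through once the $H=1$ slip is repaired.
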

\begin{proof}
The Rigidity Theorem (in the form of~\cite[Theorem~4.5.2]{cisinski-deglise:etale-motives}) implies that the bottom right horizontal arrow $\iota_!$ in~\eqref{eq:etale-realization} becomes an equivalence.
Recall also that the \'etale realization may then be described as the composite
\begin{equation*}
\label{eq:Re-et}
\Reet:\DMbigeff(\FF;\CR)\xrightarrow{\aet}\DMetbigeff(\FF;\CR)\simeq\D(\MMod{\Gamma;\CR})
\end{equation*}
from the top right to the bottom left in~\eqref{eq:etale-realization}.
Hence~\eqref{eq:Re-et-Artin} commutes.
\end{proof}

\section{Mackey functoriality}
\label{sec:mackey-functoriality}

In previous sections we have established the equivalences between three theories as described in \Cref{fig:main-triangle} and more precisely in~\eqref{eq:triangle-derived}.
Each of these theories comes with a dependence on a profinite group (or base field), and the variance of the theory as a function of the group may be seen to satisfy axioms reminiscent of Mackey functors.
Although this can be formalized as a Mackey (2-)functoriality in the sense of~\cite{balmer-dellambrogio:two-mackey}, we restrict ourselves to the key ingredients without theoretical elaboration.
Namely, we will exhibit the functoriality for each of the theories, and show that it is compatible with the equivalences between the theories.

\begin{Rem}
\label{Rem:mackey-functoriality-Dperm}%
We recalled the restriction, induction, and conjugation functors on discrete modules in \Cref{Rec:Mackey-operations-discrete} and we noted that they restrict to permutation modules.
As each of these operations is both a left and a right adjoint, they pass to $\Kb(\perm(-;\CR))$ and then to $\DPerm(-;\CR)$.
\end{Rem}

\begin{Rem}
\label{Rem:mackey-functoriality-cohmack}%
In order to quickly define the Mackey structure on cohomological Mackey functors, we view them as ($\CR$-linear) presheaves on permutation modules (\Cref{coh-mackey-perm}).
A similar approach is taken by Th\'{e}venaz and Webb~\cite{thevenaz:simple-mackey-functors}, and is equivalent to the more elementary constructions by Yoshida~\cite{Sasaki:green-wielandt}.

Given an open subgroup $\Gamma'\le \Gamma$, an element $\gamma\in \Gamma$, and cohomological Mackey functors $M\in\CohMack[\CR](\Gamma')$, $N\in\CohMack[\CR]{\Gamma}$, we define
\begin{equation}
\label{eq:mackey-functoriality-cohmack}
\rho^{\Gamma}_{\Gamma'}(N)=N\circ\Ind^{\Gamma}_{\Gamma'},\qquad \tau^{\Gamma}_{\Gamma'}(M)=M\circ\Res^{\Gamma}_{\Gamma'},\qquad \sigma_\gamma(M)=M\circ c_{\gamma\inv},
\end{equation}
where we used the corresponding Mackey functoriality on permutation modules (\Cref{Rec:Mackey-operations-discrete}).
As the latter functors are all $\CR$-linear, we conclude that the newly defined functors in~(\ref{eq:mackey-functoriality-cohmack}) remain cohomological Mackey functors.

These operations satisfy analogues of the axioms for Mackey functors, and this follows essentially from the corresponding axioms for permutation modules.
As an example, we give details for the Mackey formula.
Let $H,K\le \Gamma$ and $M\in\CohMack[\CR](K)$.
Then we find
\begin{align*}
  \rho^{\Gamma}_H\tau^{\Gamma}_KM
  &=\tau^{\Gamma}_KM\circ \Ind^{\Gamma}_H\\
  &=M\circ\Res^{\Gamma}_K\circ\Ind^{\Gamma}_H\\
  &=M\circ \oplus_{[g]\in K\backslash{}{\Gamma}/H}\Ind^K_{\Hg}\circ c_{g}\circ\Res^H_{\Kg}\\
  &=\oplus_{[g]\in K\backslash{}{\Gamma}/H}M\circ \Ind^K_{\Hg}\circ c_{g}\circ\Res^H_{\Kg}\\
  &=\oplus_{[g]\in K\backslash{}{\Gamma}/H}\tau^H_{\Kg}(M\circ \Ind^K_{\Hg}\circ c_{g})\\
  &=\oplus_{[g]\in K\backslash{}{\Gamma}/H}\tau^H_{\Kg}\sigma_{g\inv}(M\circ \Ind^K_{\Hg})\\
  &=\oplus_{[g]\in K\backslash{}{\Gamma}/H}\tau^H_{\Kg}\sigma_{g\inv}\rho^K_{\Hg}M\\
  &=\oplus_{[\gamma]\in H\backslash{}{\Gamma}/K}\tau^H_{{}^\gamma K\cap H}\sigma_{\gamma}\rho^K_{K\cap H^\gamma}M
\end{align*}
where the last equality is obtained upon replacing $g$ by $\gamma=g\inv$.
\end{Rem}

\begin{Rem}
\label{mackey-functoriality-Dcohmack}%
The operations $\rho^?_?$, $\tau^?_?$, $\sigma_?$ on cohomological Mackey functors are exact and therefore trivially induce functors (denoted by the same symbols) on the derived categories.
Indeed, induction and restriction are adjoints to each other on both sides, at the level of permutation modules.
It follows that the same is true for $\rho^?_?$ and $\tau^?_?$ at the level of cohomological Mackey functors.
In particular, these functors are exact.
The functor $\sigma_\gamma$ is an isomorphism with inverse $\sigma_{\gamma\inv}$.
\end{Rem}

\begin{Prop}
\label{mackey-functoriality-DPerm-D(CohMack)}%
Let $\Gamma'\le\Gamma$ be an open subgroup, and $\gamma\in\Gamma$.
The equivalence of~\Cref{Cor:DPerm-comack} identifies the operations on the left with the operations on the right in the following diagram:
\[
\begin{tikzcd}[column sep=large]
\DPerm(\Gamma;\CR)
\ar[r, "\sim"]
\ar[d, "\Res^\Gamma_{\Gamma'}" right, shift left=1em]
&
\D(\CohMack[\CR]{\Gamma})
\ar[d, "\rho^\Gamma_{\Gamma'}" right, shift left=1em]
\\
\DPerm(\Gamma';\CR)
\ar[u, "\Ind^\Gamma_{\Gamma'}" left, shift left=1em]
\ar[r, "\sim"]
\ar[d, "c_{\gamma}"]
&
\D(\CohMack[\CR](\Gamma'))
\ar[u, "\tau^\Gamma_{\Gamma'}" left, shift left=1em]
\ar[d, "\sigma_{\gamma}"]
\\
\DPerm({}^\gamma\Gamma';\CR)
\ar[r, "\sim"]
&
\D(\CohMack[\CR]({}^\gamma\Gamma'))
\end{tikzcd}
\]
\end{Prop}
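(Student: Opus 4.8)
The plan is to reduce the statement to the additive level and then propagate it to complexes. Recall that, by \Cref{Rem:mackey-functoriality-Dperm}, the operations $\Res^{\Gamma}_{\Gamma'}$, $\Ind^{\Gamma}_{\Gamma'}$ and $c_{\gamma}$ on $\DPerm$ are induced termwise by the corresponding additive functors between categories of permutation modules, and descend to $\DPerm$ (viewed \via \Cref{Rem:DPerm-switch} as the localizing subcategory of $\K(\Perm(-;\CR))$ generated by the $\CR(\Gamma/H)$) because each of them carries transitive permutation modules to transitive permutation modules and commutes with coproducts; by \Cref{mackey-functoriality-Dcohmack} the operations $\rho^{\Gamma}_{\Gamma'}$, $\tau^{\Gamma}_{\Gamma'}$ and $\sigma_{\gamma}$ on $\D(\CohMack[\CR]{\Gamma})$ are induced termwise by exact functors on cohomological Mackey functors, hence are likewise computed termwise on complexes. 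Since the equivalence of \Cref{Cor:DPerm-comack} is itself obtained by applying the fixed-point functor $\FP$ termwise (passing through $\K(\Proj(\CohMack[\CR]{\Gamma}))$ \via \Cref{Prop:perm-proj-comack}), it will therefore suffice to exhibit natural isomorphisms of additive functors between the relevant categories of permutation modules and of projective Mackey functors:
\[
\rho^{\Gamma}_{\Gamma'}\circ\FP\;\cong\;\FP\circ\Res^{\Gamma}_{\Gamma'},
\qquad
\tau^{\Gamma}_{\Gamma'}\circ\FP\;\cong\;\FP\circ\Ind^{\Gamma}_{\Gamma'},
\qquad
\sigma_{\gamma}\circ\FP\;\cong\;\FP\circ c_{\gamma};
\]
applying $\K(-)$ and restricting to $\DPerm$ then produces the three squares of the statement.

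These isomorphisms will come directly from the adjunctions of \Cref{Rec:Mackey-operations-discrete}. Unwinding $\FP(M)=\Hom_{\MMod{\Gamma;\CR}}(-,M)\restr{\perm(\Gamma;\CR)}$ and the definitions~\eqref{eq:mackey-functoriality-cohmack}, one finds that for a permutation module $Q$ the functor $\rho^{\Gamma}_{\Gamma'}(\FP M)=\FP(M)\circ\Ind^{\Gamma}_{\Gamma'}$ sends $Q$ to $\Hom_{\MMod{\Gamma;\CR}}(\Ind^{\Gamma}_{\Gamma'}Q,M)$, whereas $\FP(\Res^{\Gamma}_{\Gamma'}M)$ sends $Q$ to $\Hom_{\MMod{\Gamma';\CR}}(Q,\Res^{\Gamma}_{\Gamma'}M)$; these are identified, naturally in $Q$ and in $M$, by the adjunction $\Ind^{\Gamma}_{\Gamma'}\dashv\Res^{\Gamma}_{\Gamma'}$. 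The comparison of $\tau^{\Gamma}_{\Gamma'}$ with $\Ind^{\Gamma}_{\Gamma'}$ is the same computation carried out through the other adjunction $\Res^{\Gamma}_{\Gamma'}\dashv\Ind^{\Gamma}_{\Gamma'}$, and the comparison of $\sigma_{\gamma}$ with $c_{\gamma}$ uses only that $c_{\gamma}$ is an equivalence with inverse $c_{\gamma\inv}$, so that $\Hom_{\MMod{{}^{\gamma}\Gamma';\CR}}(Q,c_{\gamma}M)\cong\Hom_{\MMod{\Gamma';\CR}}(c_{\gamma\inv}Q,M)$. Equivalently, once the first isomorphism is in hand the second is obtained as its mate under the biadjunctions $\Res^{\Gamma}_{\Gamma'}\dashv\Ind^{\Gamma}_{\Gamma'}\dashv\Res^{\Gamma}_{\Gamma'}$ and $\rho^{\Gamma}_{\Gamma'}\dashv\tau^{\Gamma}_{\Gamma'}\dashv\rho^{\Gamma}_{\Gamma'}$; this also makes transparent that the displayed diagram is compatible with its bent (adjoint) arrows, a compatibility that is in any case automatic, all the adjunctions in sight being inherited from the additive, resp.\ abelian, level.

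I expect no serious obstacle here; the only points requiring a little care are organisational. First, one must check that $\Res^{\Gamma}_{\Gamma'}$, $\Ind^{\Gamma}_{\Gamma'}$ and $c_{\gamma}$ preserve $\Gamma$-acyclicity: for restriction this is immediate from $(\Res^{\Gamma}_{\Gamma'}X)^{H}=X^{H}$ for $H\le\Gamma'$, for conjugation from the analogous behaviour of fixed points, and for induction from the Mackey formula of \Cref{Rem:Mackey-formula}, which (after taking $H$-fixed points) exhibits $(\Ind^{\Gamma}_{\Gamma'}X)^{H}$ as a finite direct sum of fixed-point complexes of $X$. Second, the additive isomorphisms above show \emph{a posteriori} that $\rho^{\Gamma}_{\Gamma'}$, $\tau^{\Gamma}_{\Gamma'}$ and $\sigma_{\gamma}$ carry projective cohomological Mackey functors to projective ones, so that their termwise action on $\K(\Proj(\CohMack[\CR]{\Gamma}))$ is unambiguous and commutes with the localization onto $\D(\CohMack[\CR]{\Gamma})$; consequently a natural isomorphism at the additive level propagates through $\K(-)$ and this localization to the desired commuting squares of triangulated functors, compatibly with coproducts. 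Finally, transporting the argument along the equivalences of \Cref{sec:Shtr} and \Cref{sec:motivic} yields the analogous compatibilities for sheaves with transfers on $\Corr^{0}_{\FF}$ and for Artin motives.
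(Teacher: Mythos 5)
Your proof is correct and follows essentially the same route as the paper's: both rest on the observation that $\FP(M)\circ F\cong\FP(G(M))$ for any adjunction $F\dashv G$ among the Mackey operations, applied at the additive level, then propagated termwise through $\K(-)$ and the localization to $\D(\CohMack[\CR](?))$ and $\DPerm(?;\CR)$. The paper states this one-line adjunction identity and invokes the prior remarks (\Cref{Rem:mackey-functoriality-Dperm}, \Cref{mackey-functoriality-Dcohmack}) to dispose of the passage to derived categories; your version spells out all three squares and adds explicit checks (preservation of $\Gamma$-acyclics, preservation of projectives) that the paper leaves implicit by citing those remarks — a slightly longer but equivalent presentation, with the final sentence about sheaves with transfers being outside the scope of this proposition (it is the content of \Cref{mackey-comack-artin}).
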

\begin{proof}
Suppose $G:\MMod{H;\CR}\to\MMod{K;\CR}$ is a functor with left adjoint $F$.
Then for any $(H;\CR)$-module $M$ we have
\begin{align*}
  \FP(M)\circ F=\Hom_{\MMod{K;\CR}}(F(-),M)=\Hom_{\MMod{H;\CR}}(-,G(M))=\FP(G(M)).
\end{align*}
By the existing adjunctions for restriction, induction, and conjugation, and by our definition of the operations on cohomological Mackey functors, we see from this that the functor
\[
\FP:\MMod{?;\CR}\to\CohMack[\CR](?)
\]
is compatible with the operations in the statement.
The same is then true for the induced functor
\[
\FP:\K(\MMod{?;\CR})\to\K(\CohMack[\CR](?)).
\]
We conclude after restricting to $\DPerm(?;\CR)$ and composing with $\K(\CohMack[\CR](?))\to\D(\CohMack[\CR](?))$, by \Cref{mackey-functoriality-Dcohmack}.
\end{proof}

We now turn our focus to Artin motives.
The following functoriality is a special case of the adjunction $f_\sharp\dashv f^*$ at the level of effective motives, for any smooth morphism $f$ of schemes.
\begin{Rem}
\label{Rem:motives-smooth-functoriality}%
Let $\FF\subset\FF'$ be a finite separable field extension and let us denote by $\pi_{\FF'/\FF}:\Spec(\FF')\to\Spec(\FF)$ the associated \'etale morphism of schemes.
The scalar extension functor $\FF'\times_\FF-$ admits a left adjoint
\begin{equation}
\label{eq:etale-adjunction-corr}%
\pi_{\FF'/\FF}\circ -:\Corr_{\FF'}\rightleftarrows\Corr_{\FF}:\FF'\times_\FF-
\end{equation}
which takes a smooth $\FF'$-scheme $X$ to itself viewed as a smooth $\FF$-scheme~\cite[Lemma~9.3.7]{cisinski-deglise:DM}.
Left Kan extension and sheafification induce a similar adjunction on sheaves with transfers:
\[
(\pi_{\FF'/\FF})_\sharp:\Sh{\Nis}{\Corr_{\FF'}}{\CR}\rightleftarrows\Sh{\Nis}{\Corr_{\FF}}{\CR}:\pi_{\FF'/\FF}^*\,.
\]
Explicitly, $\pi_{\FF'/\FF}^*(F)=F\circ (\pi_{\FF'/\FF}\circ-)$, while $(\pi_{\FF'/\FF})_\sharp$ is essentially determined by sending the sheaf with transfers represented by $X\in\Corr_{\FF'}$ to the sheaf with transfers represented by $X$ viewed in $\Corr_{\FF}$.
The pullback functor is exact and passes to the derived category, where its left adjoint is given by left deriving $(\pi_{\FF'/\FF})_\sharp$.
By the description given above, it is clear that these two functors pass to an adjunction on the quotients~\eqref{eq:DMeff}, which in line with the literature we abusively denote by
\[
(\pi_{\FF'/\FF})_\sharp:\DMbigeff(\FF';\CR)\rightleftarrows\DMbigeff(\FF;\CR):\pi_{\FF'/\FF}^*\,.
\]
It is also true, although we will not need it, that the functor $\pi_{\FF'/\FF}^*$ admits a right adjoint $(\pi_{\FF'/\FF})_*$.
\end{Rem}

\begin{Rem}
\label{Rem:artin-motives-etale-functoriality}%
The adjunction of \Cref{Rem:motives-smooth-functoriality} clearly restricts to the subcategories of Artin motives, since on motives of smooth schemes they coincide with the adjunction of~\eqref{eq:etale-adjunction-corr}.
Thus we obtain
\[
(\pi_{\FF'/\FF})_\sharp:\DAMbig(\FF';\CR)\rightleftarrows \DAMbig(\FF;\CR):\pi_{\FF'/\FF}^*.
\]
We also note that the adjunction restricts to geometric Artin motives for the same reason.
\end{Rem}

\begin{Rem}
\label{Rem:Artin-motives-conjugation}%
Let $\FF$ be a field with a fixed separable algebraic closure $\FFsep$ and absolute Galois group $\Gamma=\Gal(\FFsep/\FF)$.
For any element $\gamma\in\Gamma$ and finite field extension $\FF\subseteq\FF'$ contained in $\FFsep$ we obtain an equivalence
$\gamma:\Sm^0_{\gamma\inv(\FF')}\isoto\Sm^0_{\FF'}$ which takes the spectrum of $\KK$ (for $\gamma\inv(\FF')\subset\KK\subset\FFsep$) to the spectrum of $\gamma(\KK)$.
We denote by the same symbol $\gamma:\Corr^0_{\gamma\inv(\FF')}\isoto\Corr^0_{\FF'}$ the associated equivalence on correspondences, and by $\gamma^*=-\circ \gamma$ the induced equivalence on sheaves with transfers.
Since it is exact, it passes to the derived category.
Through the equivalence of \Cref{sheaves-transfers-mackey} we obtain an adjoint equivalence
\[
\gamma^*:\DAMbig(\FF';\CR)\simeq\DAMbig(\gamma\inv(\FF');\CR):(\gamma\inv)^*.
\]
\end{Rem}

\begin{Prop}
\label{mackey-comack-artin}%
Let $\FF\subset\FF'$ be a finite separable extension, let $\FFsep$ be a separable algebraic closure of $\FF$ with Galois group $\Gamma=\Gal(\FFsep/\FF)$.
Assume that $\FF'\subset\FFsep$ corresponds to the open subgroup $\Gamma'\le\Gamma$.
Also let $\gamma\in\Gamma$.

The equivalences of \Cref{sheaves-transfers-mackey} and \Cref{DPerm-DAM} identify the operations on the left with the operations on the right in the following diagram:
\[
\begin{tikzcd}[column sep=large]
\D(\CohMack[\CR]{\Gamma})
\ar[d, "\rho^\Gamma_{\Gamma'}" right, shift left=1em]
\ar[r, "\sim"]
&
\DAMbig(\FF;\CR)
\ar[d, "\pi_{\FF'/\FF}^*" right, shift left=1em]
\\
\D(\CohMack[\CR]{\Gamma'})
\ar[u, "\tau^\Gamma_{\Gamma'}" left, shift left=1em]
\ar[r, "\sim"]
\ar[d, "\sigma_{\gamma}"]
&
\DAMbig(\FF';\CR)
\ar[u, "(\pi_{\FF'/\FF})_\sharp" left, shift left=1em]
\ar[d, "\gamma^*"]
\\
\D(\CohMack[\CR]{{}^\gamma\Gamma'})
\ar[r, "\sim"]
&
\DAMbig(\gamma\inv(\FF');\CR)
\end{tikzcd}
\]
\end{Prop}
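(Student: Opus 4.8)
The plan is to reduce the whole statement to the classical Galois correspondence, by two standard reductions: first to the non-derived level, and then to the level of the additive categories of correspondences and of permutation modules. For the first reduction, recall that the operations $\rho^\Gamma_{\Gamma'}$, $\tau^\Gamma_{\Gamma'}$, $\sigma_\gamma$ on cohomological Mackey functors are exact (\Cref{mackey-functoriality-Dcohmack}), that $\pi_{\FF'/\FF}^*$ and $\gamma^*$ are exact on sheaves with transfers and pass to the derived category as the associated derived functors (\Cref{Rem:motives-smooth-functoriality}, \Cref{Rem:Artin-motives-conjugation}), and that $(\pi_{\FF'/\FF})_\sharp$ is, by construction, left adjoint to $\pi_{\FF'/\FF}^*$ at every level. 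Hence the identification $\tau^\Gamma_{\Gamma'}\leftrightarrow(\pi_{\FF'/\FF})_\sharp$ will follow from $\rho^\Gamma_{\Gamma'}\leftrightarrow\pi_{\FF'/\FF}^*$ by uniqueness of left adjoints (recall from \Cref{mackey-functoriality-Dcohmack} that $\tau^\Gamma_{\Gamma'}$ is left adjoint to $\rho^\Gamma_{\Gamma'}$), and it suffices to establish the identifications $\rho^\Gamma_{\Gamma'}\leftrightarrow\pi_{\FF'/\FF}^*$ and $\sigma_\gamma\leftrightarrow\gamma^*$ already at the level of the abelian categories $\CohMack[\CR]{\Gamma}\simeq\Sh{\Nis}{\Corr^0_\FF}{\CR}$ of \Cref{sheaves-transfers-mackey}. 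Moreover, since $\iota_!$ is fully faithful and commutes with the functors $\pi_{\FF'/\FF}^*$ by the explicit description of the base-change functor $\pi_{\FF'/\FF}\circ-$ in \Cref{Rem:motives-smooth-functoriality}, and since on the zero-dimensional site every presheaf is automatically a sheaf (\Cref{Rem:sheaves-mackey-functors}), I would work throughout on the sheaves-with-transfers side $\Sh{\Nis}{\Corr^0_\FF}{\CR}$ rather than inside $\DAMbig(\FF;\CR)$.

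Next, via \Cref{Cor:comack-presh-perm} and \Cref{Rem:sheaves-mackey-functors}, both abelian categories are categories of $\CR$-linear presheaves — on $\perm(\Gamma;\CR)$, respectively on $\Corr^0_\FF\otimes\CR$ — and the equivalence $\Psi_{\FFsep}$ is precomposition with the equivalence $\Theta\colon\Corr^0_\FF\otimes\CR\isoto\perm(\Gamma;\CR)$ of \Cref{Corr-perm-equivalence} (compare the picture of \Cref{sheaves-picture}). Since $\rho^\Gamma_{\Gamma'}$, $\sigma_\gamma$, $\pi_{\FF'/\FF}^*$ and $\gamma^*$ are themselves all defined by precomposition — with $\Ind^\Gamma_{\Gamma'}$, with $c_\gamma$, with $\pi_{\FF'/\FF}\circ-\colon\Corr^0_{\FF'}\otimes\CR\to\Corr^0_\FF\otimes\CR$, and with $\gamma\colon\Corr^0_{\gamma\inv(\FF')}\otimes\CR\isoto\Corr^0_{\FF'}\otimes\CR$ respectively — the proposition reduces to the statement that $\Theta$ (and its analogues for the subgroups in play) intertwines $\pi_{\FF'/\FF}\circ-$ with $\Ind^\Gamma_{\Gamma'}$, and intertwines $\gamma$ with $c_\gamma$. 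By \Cref{Corr-perm-equivalence} and \Cref{Rem:corr-spans}, the equivalence $\Theta$ is built — through span categories, linearization, and passage to the appropriate quotient — from the Galois equivalence $\Sm^0_\FF\isoto\gasets$ of \Cref{Rec:fet-g-sets}; as all the functors involved are geometric, hence functorial in $\Sm^0_?$, it suffices to check the two compatibilities already on $\Sm^0_\FF\isoto\gasets$. There they are classical Galois theory: on $\FFsep$-points, scalar extension $\FF'\times_\FF-$ corresponds to restriction of the $\Gamma$-action, so its left adjoint $\pi_{\FF'/\FF}\circ-$ (``view an $\FF'$-scheme as an $\FF$-scheme'') corresponds to $\Ind^\Gamma_{\Gamma'}$, and $\Spec(\KK)\mapsto\Spec(\gamma(\KK))$ corresponds to twisting the Galois action by $\gamma$, \ie to $c_\gamma$; this is the geometric counterpart of the representation-theoretic bookkeeping in \Cref{mackey-functoriality-DPerm-D(CohMack)}.

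The hard part is not conceptual but a matter of careful bookkeeping. The point requiring most attention is that $(\pi_{\FF'/\FF})_\sharp$ was originally defined on the big category $\Sh{\Nis}{\Corr_\FF}{\CR}$ and only afterwards restricted to Artin motives (\Cref{Rem:artin-motives-etale-functoriality}); one must verify that this restriction coincides with left Kan extension directly along $\pi_{\FF'/\FF}\circ-\colon\Corr^0_{\FF'}\otimes\CR\to\Corr^0_\FF\otimes\CR$, so that the uniqueness-of-adjoints argument genuinely applies — this is immediate from the formula for $(\pi_{\FF'/\FF})_\sharp$ on representables together with the compatibility of $\iota_!$ with the base-change functors. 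The other delicate point is matching the conjugation indices, namely checking that $\gamma\inv(\FF')$ corresponds under Galois theory to the subgroup ${}^{\gamma}\Gamma'$ appearing in the statement and that the occurrences of $\gamma$ versus $\gamma\inv$ in $\sigma_\gamma(M)=M\circ c_{\gamma\inv}$ and in $\gamma^*=-\circ\gamma$ are consistent; this becomes routine once the conventions are pinned down as in \Cref{Rec:fet-g-sets} and \Cref{Rem:Artin-motives-conjugation}.
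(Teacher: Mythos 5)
Your proposal is correct and follows essentially the same strategy as the paper's proof: reduce to the level of the underlying additive categories via the Galois equivalence $\Sm^0_\FF\isoto\gasets$ of \Cref{Rec:fet-g-sets} and \Cref{Corr-perm-equivalence}, and then propagate to sheaves and derived categories. The one substantive difference is in the top square. You observe that $\pi_{\FF'/\FF}^*$ and $\rho^\Gamma_{\Gamma'}$ are both precomposition functors (with $\pi_{\FF'/\FF}\circ -$ and with $\Ind^\Gamma_{\Gamma'}$, respectively) and prove directly that $\Theta$ intertwines $\pi_{\FF'/\FF}\circ-$ with $\Ind^\Gamma_{\Gamma'}$. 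The paper instead proves the adjoint identification $\FF'\times_\FF-\leftrightarrow\Res^\Gamma_{\Gamma'}$ on representables, then characterizes $\pi_{\FF'/\FF}^*$ as the (colimit-preserving) left Kan extension of $\FF'\times_\FF-$, and matches it with $\rho^\Gamma_{\Gamma'}$ as the left adjoint of $\tau^\Gamma_{\Gamma'}=-\circ\Res$. These are adjoint ways of packaging the same Galois-theoretic bijection
\[
\Hom_{\FF'}(\FF'\otimes_\FF\mathcal{O}(X),\FFsep)\cong\Hom_{\FF}(\mathcal{O}(X),\FFsep),
\]
and both are fine; your version avoids the explicit Kan-extension step, at the mild cost of having to invoke the compatibility of $\iota_!$ with base change so as to work in $\Sh{\Nis}{\Corr^0_?}{\CR}$ rather than $\DAMbig(?;\CR)$. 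One place where your ``routine bookkeeping'' remark undersells the work is the conjugation square: note that $\gamma\inv(\FF')$ corresponds under Galois theory to $\gamma\inv\Gamma'\gamma=(\Gamma')^\gamma$, not to $\gamma\Gamma'\gamma\inv$, and the identification of the functor $\gamma$ with a $c_?$ (and hence of $\gamma^*=-\circ\gamma$ with $\sigma_\gamma$) requires one to track carefully which of $c_\gamma$ or $c_{\gamma\inv}$ actually appears once the domain group is pinned down. In the first paragraph you write ``precomposition with $c_\gamma$'' while later quoting $\sigma_\gamma(M)=M\circ c_{\gamma\inv}$; in a final write-up this should be reconciled with an explicit calculation rather than deferred as routine.
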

\begin{proof}
We start with conjugation, that is, the bottom square.
If $\gamma\inv(\FF')\subseteq\KK\subseteq\FFsep$ corresponds to $K\le (\Gamma')^\gamma$ then $\gamma(\KK)$ corresponds to ${}^\gamma K\le \Gamma'$.
Under the equivalence between categories of correspondences and permutation modules of \Cref{Corr-perm-equivalence}, the operation $\gamma$ is therefore induced by $\CR((\Gamma')^\gamma/H)\mapsto\CR(\Gamma'/{}^\gamma H)$, that is, $c_{\gamma\inv}$.
By definition (\Cref{Rem:mackey-functoriality-cohmack,Rem:Artin-motives-conjugation}), it then follows that the operations $\sigma_\gamma=-\circ c_{\gamma\inv}$ and $\gamma^*=-\circ\gamma$ correspond to each other under the equivalence of \Cref{sheaves-transfers-mackey}.
Passing to the derived categories we see that the bottom square in the statement commutes.

For the top square, by uniqueness of adjoints, it suffices to discuss restriction.
Now, given a $0$-dimensional smooth $\FF$-scheme $X$, it is clear that the corresponding $\Gamma$-set is in bijection with the $\Gamma'$-set corresponding to the $\FF'$-scheme $\FF'\times_\FF X$:
\[
\Hom_{\FF'}(\FF'\otimes_\FF\mathcal{O}(X),\FFsep)\cong\Hom_{\FF}(\mathcal{O}(X),\FFsep).
\]
It follows that the functor $\FF'\times_\FF-$ on correspondences~\eqref{eq:etale-adjunction-corr} identifies with restriction on permutation modules.
Left Kan extending we obtain that the functor $\pi^*_{\FF'/\FF}$ on sheaves with transfers identifies with the left Kan extension of restriction on cohomological Mackey functors, under the equivalence of \Cref{sheaves-transfers-mackey}.
The latter is left adjoint to the functor $\tau^\Gamma_{\Gamma'}=-\circ\Res^\Gamma_{\Gamma'}$ of~\eqref{eq:mackey-functoriality-cohmack}, thus coincides with $\rho^\Gamma_{\Gamma'}$.
Passing to derived categories, we conclude that the top square(s) commute(s) as well.
\end{proof}

Combining \Cref{mackey-functoriality-DPerm-D(CohMack),mackey-comack-artin} we obtain that the Mackey functoriality on the derived category of permutation modules identifies with the one on Artin motives.
\begin{Cor}
With the notation and assumptions of \Cref{mackey-comack-artin}, the equivalence of \Cref{DPerm-DAM} identifies the operations on the left with the operations on the right in the following diagram:
\[
\pushQED{\qed}
\adjustbox{valign=b}{\begin{tikzcd}[column sep=large]
\DPerm(\Gamma;\CR)
\ar[r, "\sim"]
\ar[d, "\Res^\Gamma_{\Gamma'}" right, shift left=1em]
&
\DAMbig(\FF;\CR)
\ar[d, "\pi_{\FF'/\FF}^*" right, shift left=1em]
\\
\DPerm(\Gamma';\CR)
\ar[u, "\Ind^\Gamma_{\Gamma'}" left, shift left=1em]
\ar[r, "\sim"]
\ar[d, "c_{\gamma}"]
&
\DAMbig(\FF';\CR)
\ar[u, "(\pi_{\FF'/\FF})_\sharp" left, shift left=1em]
\ar[d, "\gamma^*"]
\\
\DPerm({}^\gamma\Gamma';\CR)
\ar[r, "\sim"]
&
\DAMbig(\gamma\inv(\FF');\CR)
\end{tikzcd}
}
\qedhere
\popQED
\]
\end{Cor}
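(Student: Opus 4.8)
The plan is to deduce the statement simply by pasting together the two commutative diagrams established in \Cref{mackey-functoriality-DPerm-D(CohMack)} and \Cref{mackey-comack-artin}. The key point to keep in mind is that the equivalence $\DPerm(\Gamma;\CR)\isoto\DAMbig(\FF;\CR)$ of \Cref{DPerm-DAM} is, by construction, the composite $\iota_!\circ\Psi_{\FFsep}\inv\circ\FP$ which passes through $\D(\CohMack[\CR]{\Gamma})$: its first leg $\FP\colon\DPerm(\Gamma;\CR)\isoto\D(\CohMack[\CR]{\Gamma})$ is the equivalence of \Cref{Cor:DPerm-comack}, and its second leg $\iota_!\circ\Psi_{\FFsep}\inv\colon\D(\CohMack[\CR]{\Gamma})\isoto\DAMbig(\FF;\CR)$ is the equivalence of \Cref{sheaves-transfers-mackey} post-composed with the fully faithful embedding of \Cref{Prop:derived-embedding-artin} (whose essential image is $\DAMbig(\FF;\CR)$). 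The same factorization applies verbatim with $\Gamma$ replaced by $\Gamma'$ or by ${}^\gamma\Gamma'$.

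First I would invoke \Cref{mackey-functoriality-DPerm-D(CohMack)} to identify, under the first legs $\FP$, the operations $\Res^\Gamma_{\Gamma'}$, $\Ind^\Gamma_{\Gamma'}$, $c_\gamma$ on the $\DPerm$-side with the operations $\rho^\Gamma_{\Gamma'}$, $\tau^\Gamma_{\Gamma'}$, $\sigma_\gamma$ on the $\D(\CohMack)$-side. Then I would invoke \Cref{mackey-comack-artin} to identify, under the second legs $\iota_!\circ\Psi_{\FFsep}\inv$, those operations $\rho^\Gamma_{\Gamma'}$, $\tau^\Gamma_{\Gamma'}$, $\sigma_\gamma$ with $\pi_{\FF'/\FF}^*$, $(\pi_{\FF'/\FF})_\sharp$, $\gamma^*$ on the Artin-motives side. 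Composing these two identifications along the shared categories $\D(\CohMack[\CR]{\Gamma})$, $\D(\CohMack[\CR]{\Gamma'})$ and $\D(\CohMack[\CR]{{}^\gamma\Gamma'})$ produces precisely the diagram in the statement, including the matching of the upward (left-adjoint) arrows $\Ind^\Gamma_{\Gamma'}\leftrightarrow(\pi_{\FF'/\FF})_\sharp$, which is already recorded in both cited propositions (and would in any case follow from uniqueness of adjoints once the downward arrows $\Res^\Gamma_{\Gamma'}\leftrightarrow\pi_{\FF'/\FF}^*$ are matched, since $\Ind^\Gamma_{\Gamma'}\dashv\Res^\Gamma_{\Gamma'}$ and $(\pi_{\FF'/\FF})_\sharp\dashv\pi_{\FF'/\FF}^*$).

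I do not expect any genuine obstacle: all the substantive work has been carried out in the two cited propositions, and this corollary is merely their transitivity. The only point deserving a line of care is to confirm that the equivalence named in \Cref{DPerm-DAM} is indeed the composite through $\D(\CohMack[\CR]{\Gamma})$ against which \Cref{mackey-functoriality-DPerm-D(CohMack)} and \Cref{mackey-comack-artin} are phrased; this is immediate from the way that equivalence was defined in the proof of \Cref{DPerm-DAM}.
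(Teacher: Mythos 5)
Your proposal is correct and is exactly the paper's approach: the paper introduces the corollary with the sentence "Combining \Cref{mackey-functoriality-DPerm-D(CohMack),mackey-comack-artin} we obtain..." and then leaves the proof implicit (a \qedhere in the statement), since the equivalence of \Cref{DPerm-DAM} is by construction the composite through $\D(\CohMack[\CR]{\Gamma})$, so the two commutative diagrams paste together as you describe. Your parenthetical about uniqueness of adjoints for the upward arrows is a correct but unneeded observation, as both cited propositions already record the upward arrows.
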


We finish with an observation about inflation.
Let $N\subseteq \Gamma$ be a \emph{closed but not necessarily open} normal subgroup, corresponding to a Galois extension $\FF\subseteq\EE\subseteq\FFsep$.
\begin{Def}
\label{Def:bounded-DAM}%
We denote by $\DAMbig(\EE/\FF;\CR)$ the localizing subcategory of $\DAMbig(\FF;\CR)$ generated by the motives of $\Spec(\FF')$, where $\FF\subseteq\FF'\subseteq\EE$ is a finite subextension of~$\FF$.
\end{Def}

\begin{Prop}
\label{Prop:Infl-Inclusion}%
The equivalence of \Cref{DPerm-DAM} identifies the subcategory on the left with the subcategory on the right in the following diagram:
\[
\begin{tikzcd}[column sep=large]
\DPerm(\bar\Gamma;\CR)
\ar[r, "\sim"]
\arrow[d, rightarrowtail, "\Infl_{\bar{\Gamma}}^{\Gamma}" right]
&
\DAMbig(\EE/\FF;\CR)
\ar[d, rightarrowtail]
\\
\DPerm(\Gamma;\CR)
\ar[r, "\sim"]
&
\DAMbig(\FF;\CR).
\end{tikzcd}
\]
\end{Prop}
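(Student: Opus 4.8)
The plan is to identify the essential images of the two vertical functors as localizing subcategories generated by matching sets of objects, and then to use that a coproduct-preserving equivalence of triangulated categories carries the localizing subcategory generated by a set onto the one generated by its image. First I would invoke \Cref{Lem:Infl-fully-faithful-DPerm}: the functor $\Infl_{\bar\Gamma}^{\Gamma}$ is fully faithful, with essential image the localizing subcategory $\Loc{\CR(\Gamma/K)\mid N\subseteq K\le\Gamma}$ of $\DPerm(\Gamma;\CR)$. On the motivic side, $\DAMbig(\EE/\FF;\CR)$ is \emph{by definition} (\Cref{Def:bounded-DAM}) the localizing subcategory of $\DAMbig(\FF;\CR)$ generated by the motives of $\Spec(\FF')$ with $\FF\subseteq\FF'\subseteq\EE$ a finite subextension. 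So the statement reduces to checking that the equivalence of \Cref{DPerm-DAM} carries the first generating set onto the second.

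This is Galois bookkeeping. Since $\EE/\FF$ is the Galois extension attached to the closed normal subgroup $N\normal\Gamma$, we have $\EE=(\FFsep)^{N}$; hence, under the Galois correspondence of \Cref{Rec:fet-g-sets} sending $\Gamma/K$ to $\Spec((\FFsep)^{K})$, the condition $N\subseteq K$ becomes precisely $(\FFsep)^{K}\subseteq\EE$, that is, $\FF\subseteq(\FFsep)^{K}\subseteq\EE$ is a finite subextension, and conversely every such subextension arises this way. Moreover, by \Cref{FP-inverse} the equivalence of \Cref{DPerm-DAM} carries $\CR(\Gamma/K)$ to the motive of $\Spec((\FFsep)^{K})$. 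So that equivalence restricts to a bijection between $\SET{\CR(\Gamma/K)}{N\subseteq K\le\Gamma}$ and the set of motives of the $\Spec(\FF')$ with $\FF\subseteq\FF'\subseteq\EE$ finite.

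Finally, the equivalence of \Cref{DPerm-DAM} is coproduct-preserving (it is built from the coproduct-preserving functors of \Cref{Cor:DPerm-comack} and \Cref{Prop:derived-embedding-artin}), and so is its inverse; hence it carries $\Loc{\CR(\Gamma/K)\mid N\subseteq K\le\Gamma}$ onto $\DAMbig(\EE/\FF;\CR)$. Combined with the first step, this gives the claimed identification of subcategories and the commutativity of the square, the top horizontal arrow being the corestriction of the bottom equivalence (which is fully faithful since $\Infl_{\bar\Gamma}^{\Gamma}$ is). I do not expect a genuine obstacle here; the only point deserving a moment's care is the identity $\EE=(\FFsep)^{N}$ together with the resulting matching of the open subgroups of $\Gamma$ containing $N$ with the finite subextensions of $\EE/\FF$, which rests on the standard Galois correspondence for the (possibly non-open) closed subgroup $N$.
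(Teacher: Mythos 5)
Your proof follows essentially the same route as the paper's: invoke \Cref{Lem:Infl-fully-faithful-DPerm} for full faithfulness and identification of the image as a localizing subcategory, then match the generating sets $\SET{\CR(\Gamma/K)}{N\subseteq K\le\Gamma}$ and the motives of $\Spec(\FF')$ for finite $\FF\subseteq\FF'\subseteq\EE$ under the equivalence of \Cref{DPerm-DAM} via Galois theory, and conclude. You are a bit more explicit about the Galois bookkeeping and the need for the equivalence to preserve coproducts, but the argument is the same.
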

\begin{proof}
The vertical functor on the left is fully faithful, by \Cref{Lem:Infl-fully-faithful-DPerm}.
The claim then follows from the observation that the composite of \Cref{DPerm-DAM} with inflation takes the compact generators $\CR(\bar\Gamma/\bar K)$ of $\DPerm(\bar\Gamma;\CR)$ defined by $N\subseteq K\le \Gamma$ to the compact generators of $\DAMbig(\EE/\FF;\CR)$, namely the motive of $\Spec(\FFsep^K)$.
\end{proof}

\begin{Rem}
Note that $\DAM(\FF;\CR)$ is the union of $\DAM(\EE/\FF;\CR)$ where $\EE$ runs through \emph{finite} Galois extensions, and where $\DAM(\EE/\FF;\CR)=\DAMbig(\EE/\FF;\CR)\cap\DAM(\FF;\CR)$ is the compact part.
This corresponds, by \Cref{Prop:Infl-Inclusion}, to the remark made about the derived category of permutation modules in \Cref{Rem:Kbperm-union}.
\end{Rem}


\bibliographystyle{alpha}
\bibliography{ref}

\end{document}